\documentclass[a4paper]{amsart}

\usepackage[alphabetic]{amsrefs}
\usepackage{color,hyperref,pgfplots,newtxtext,newtxmath}
\pgfplotsset{compat=1.18}

\theoremstyle{plain}
\newtheorem{theorem}{Theorem}[section]
\newtheorem{lemma}[theorem]{Lemma}

\newtheorem{coro}[theorem]{Corollary}

\theoremstyle{definition}

\theoremstyle{remark}
\newtheorem{remark}[theorem]{Remark}

\numberwithin{equation}{section}
\newcommand{\abs}[1]{\lvert#1\rvert}
\newcommand{\labs}[1]{\left\lvert\,#1\,\right\rvert}

\newcommand{\Lr}[1]{\left(#1\right)}
\newcommand{\lr}[1]{\Bigl(#1\Bigr)}
\newcommand{\set}[2]{\left\{\,#1\,\mid\,#2\,\right\}}

\newcommand{\nm}[2]{\|\,#1\,\|_{#2}}
\newcommand{\jump}[1]{[\![#1]\!]}
\newcommand{\ave}[1]{\{\!\{#1\}\!\}}
\newcommand{\wnm}[1]{|\!|\!|#1|\!|\!|_{\iota,h}}
\newcommand{\inm}[1]{|\!|\!|#1|\!|\!|_{\iota}}

\def\adj{L^{\text{adj}}}

\newcommand{\mc}[1]{\mathcal{#1}}
\newcommand{\mb}[1]{\mathbb{#1}}

\def\al{\alpha}

\def\del{\delta}

\def\na{\nabla}
\def\eps{\epsilon}
\def\pa{\partial}
\def\Om{\Omega}

\def\lam{\lambda}
\def\x{\times}

\def\md{\mathrm{d}}

\def\C{\mathbb C}
\def\D{\mathbb D}
\def\R{\mathbb R}
\def\T{\mc{T}}

\def\dx{\,\mathrm{d}\boldsymbol{x}}

\def\dsx{\,\md\sigma(\boldsymbol{x})}

\DeclareMathOperator{\divop}{div}

\def\negint{{\int\negthickspace\negthickspace\negthickspace
\negthinspace -}}
\newcommand{\nn}{\nonumber}
\begin{document}
\title[A broken Hardy inequality on FE and application for SGE]{A broken Hardy inequality on finite element space and application to strain gradient elasticity}
\author[Y. L. Liao]{Yulei Liao}
\address{Department of Mathematics, Faculty of Science, National University of Singapore, 10 Lower Kent Ridge Road, Singapore 119076, Singapore}
\email{ylliao@nus.edu.sg}
\author[P. B. Ming]{Pingbing Ming}
\address{LSEC, Institute of Computational Mathematics and Scientific/Engineering Computing, AMSS, Chinese Academy of Sciences, Beijing 100190, China}
\address{School of Mathematical Sciences, University of Chinese Academy of Sciences, Beijing 100049, China}
\email{mpb@lsec.cc.ac.cn}
\thanks{This work was funded by National Natural Science Foundation of China through Grant No. 12371438.}
\begin{abstract}
We illustrate a broken Hardy inequality on discontinuous finite element spaces, blowing up with a logarithmic factor with respect to the meshes size. This is motivated by numerical analysis for the strain gradient elasticity with natural boundary conditions. A mixed finite element pair is employed to solve this model with nearly incompressible materials. This pair is quasi-stable with a logarithmic factor, which is not significant in the approximation error, and converges robustly in the incompressible limit and uniformly in the microscopic material parameter. Numerical results back up that the theoretical predictions are nearly optimal. Moreover, the regularity estimates for the model over a smooth domain have been proved with the aid of the Agmon-Douglis-Nirenberg theory.
\end{abstract}
\keywords{Hardy inequality; Mixed finite elements; Nearly incompressible strain gradient elasticity; Uniform error estimate}
\date{\today}
\subjclass[2020]{Primary 65N15, 65N30; Secondary 26D10, 46E35, 74K20}
\maketitle
\section{Introduction}
The Hardy inequality for integrals is shown in~\cite{Hardy:1920} that
\[
    \nm{f/x}{L^p((0,\infty))}\le\dfrac{p}{p-1}\nm{f'}{L^p((0,\infty))}
\]
for all $p>1$ and $f\in W_0^{1,p}((0,\infty))$. The constant $p/(p-1)$ is best~\cite{Landau:1926}. It has been extended to dimension $d\ge 2$ and $p\neq d$ in~\cite{Hardy:1952inequalities,Evans:2015} that
\[
    \nm{f/\rho}{L^p(\mb{R}^d)}\le\dfrac{p}{\abs{p-d}}\nm{\nabla f}{L^p(\mb R^d)}
\]
for all $f\in W_0^{1,p}(\mb R^d)$ when $1\le p<d$, and for $f\in W_0^{1,p}(\mb R^d\backslash\{\boldsymbol{0}\})$ if $p>d$. Here the radius $\rho(\boldsymbol{x})=\abs{\boldsymbol{x}}$. Again, the constant $p/\abs{p-d}$ is best.
However, as to the critical case $p=d$, it is shown in~\cite[\S~1.2.5]{Evans:2015} that the Hardy inequality is invalid for $f\in W_0^{1,d}(\mb R^d\backslash\{\boldsymbol{0}\})$. There are two ways to establish the critical Hardy inequality. One approach is to shrink the function space as shown in~\cite[Proposition 2.2]{Nazarov:2006}: There exists a positive constant $C$ depending on $d$ such that
\[
    \nm{f/\rho}{L^d(\mb R^d)}\le C\nm{\nabla f}{L^d(\mb R^d)}
\]
for $f\in W_0^{1,d}(\mb R^d\backslash\ell)$, where $\ell$ is an arbitrary ray with vertex at $\boldsymbol{0}$. Another approach is to introduce a logarithmic factor in the weighted function. 
It was proved  in~\cite[Theorem 2.8]{EdmundsTriebel:1999} that there exists a constant $C>0$ such that
\[
\nm{f/(\rho(1+\abs{\log\rho}))}{L^d(\mb R^d)}\le C\nm{\nabla f}{L^d(\mb R^d)}
\]
for all $f\in W_0^{1,d}(\mb{R}^d)$. Unfortunately, the dilation invariance of the original Hardy inequality is absent from 
the above inequality.

In this work, we focus on the critical case $p=d$ on a bounded domain. Let $B_1^d$ be the unit ball in $\mb R^d$. It follows from the above discussion that the Hardy inequality holds on $B_1^d$ only if $f=0$ on an ray $\ell$ with vertex $\boldsymbol{0}$.  In~\cite[Lemma 17.4]{Tartar:2007introduction} and~\cite[Corollary 6]{Machihara:2013}, they show for $p=d=2$ that
\[
    \nm{f/(\rho\log\rho)}{L^2(B_1^2)}\le 2\nm{\nabla f}{L^2(B_1^2)}
\]
for $f\in H_0^1(B_1^2)$, and the constant $2$ is best. Later, the authors in~\cite[Theorem 1.1]{Ioku:2015} generalized the above inequality to $p=d\ge 2$ as
\[
\nm{f/(\rho\log\rho)}{L^d(B_1^d)}\le\dfrac{d}{d-1}\nm{\nabla f}{L^d(B_1^d)}
\]
for $f\in W_0^{1,d}(B_1^d)$. Here the constant $d/(d-1)$ is best and it is scaling invariant~\cite[proposition A.2]{Ioku:2015} in the sense that $f_\lambda(\boldsymbol x){:}=f(\rho^{\lambda-1}\boldsymbol x)$. Though there are broad extensions of Hardy inequalities in~\cite{Ghoussoub:2013functional,Tertikas:2018}, there is no critical Hardy inequality that is valid for all $f\in W_0^{1,d}(B_1^d\backslash\{\boldsymbol{0}\})$ without logarithmic factor in the weighted function. 

We plan to prove a new Hardy inequality for a continuous finite element function satisfying $f(\boldsymbol{0})=0$, i.e., 
there exists $C$ independent of $h$ such that
\begin{equation}\label{eq:hardy1}
    \nm{f/\rho}{L^d(B_1^d)}\le C\log(1/h)\nm{\nabla f}{L^d(B_1^d)}. 
\end{equation}
Hence, the Hardy inequality for $p=d$ holds with a logarithmic factor in terms of the mesh size $h$. Meanwhile this factor seems optimal with respect to $h$, at least in $d=1$. 
This inequality is a direct consequence of Theorem~\ref{thm:Hardy}, in which we have proved an analogous Hardy inequality for the discontinuous finite element functions. It is worthwhile to mention that the domain $B_1^d\backslash\{\boldsymbol{0}\}$ here may be easy to extend to a bounded cone with its vertex at $\boldsymbol{0}$ or a start-shaped domain.

The motivation of this broken Hardy inequality is to analyze the stability of the finite element approximation of the nearly incompressible strain gradient elasticity with natural boundary conditions. Let $\Omega\subset\mb R^d$ be a bounded domain. The basic tool for analyzing the linear elasticity and the strain gradient elasticity with essential boundary condition is 
\[
\divop[H_0^m(\Omega)]^d=L_0^2(\Omega)\cap H_0^{m-1}(\Omega)
\]
for any positive integer $m$ and the divergence operator has a bounded right inverse when $\Omega$ is Lipschitz~\cite{Bog:1979,Costabel:2010,Galdi:2011}.
As to the strain gradient elasticity with natural boundary condition, an analogous identity is 
\[
\divop [H_0^1(\Omega)\cap H^2(\Omega)]^d=L_0^2(\Omega)\cap H^1(\Omega).
\] 
Unfortunately, the boundedness of the inversion of the divergence operator requires that $\partial\Omega$ is at least $C^2$~\cite{Danchin:2013}. Specially, for the polygon $\Omega\subset\mb{R}^2$, the space of $\divop[H_0^1(\Omega)\cap H^2(\Omega)]^2$ is measured by $H^1$-norm plus a weighted semi-norm with the weight that corresponds to the distance to each vertex of $\Omega$~\cite{Arnold:1988}. Exploiting the fact that this semi-norm can be viewed as the left hand side of the broken Hardy inequality~\eqref{eq:hardy1}, we establish the stability of a finite element pair approximation of the strain gradient elasticity with natural boundary condition posed on a convex polygon. This opens up possibilities for more practical and efficient numerical simulations in scenarios where the domain lacks smoothness.

In what follows we review the strain gradient elasticity model. Let $\Omega\subset\mb{R}^2$ be a bounded polygon and $\boldsymbol{u}:\Omega\to\mb R^2$ solves
\begin{equation}\label{eq:sgbvp}
\left\{
\begin{aligned}
(\iota^2\Delta-I)\Lr{\mu\Delta\boldsymbol{u}+(\lambda+\mu)\na\divop\boldsymbol{u}}&=\boldsymbol{f}\quad&&\text{in\;}\Om,\\
\boldsymbol{u}=\partial_{\boldsymbol{n}}\boldsymbol{\sigma n}&=\boldsymbol{0}\quad&&\text{on\;}\pa\Om,
\end{aligned}\right.
\end{equation}
where $\mu$ and $\lam$ are Lam\'e constants, and $\iota$ is the material parameter satisfying $0<\iota\le 1$. The stress $\boldsymbol{\sigma}{:}=\C\boldsymbol{\epsilon}(\boldsymbol{u})$ with the strain $\boldsymbol{\epsilon}(\boldsymbol{u}){:}=(\nabla\boldsymbol{u}+\nabla\boldsymbol{u}^T)/2$ and
\(
\C_{ijkl}=\lam\del_{ij}\del_{kl}+2\mu\del_{ik}\del_{jl},
\)
where $\delta_{ij}$ is the Kronecker delta function. Problem~\eqref{eq:sgbvp} models a strain gradient elasticity~\cite{Altan:1992} with $\boldsymbol{u}$ the displacement field, for which the displacement and the double traction are set free on the boundary of the domain~\cite{Klassen:2011}. Such a model has been considered in many areas such as metamaterials~\cite{Yang:2023} and second-order materials~\cite{Mizel:1998}. It may be viewed as a vector extension of the fourth-order perturbation of the Monge-Amp\`ere equation~\cite{FengNeilan:2014}, the fourth-order singular perturbation problem~\cite{GuzmanLeykekhmanNeilan:2012,Tai:2001,Schuss:1976,Semper:1992} and the models in MEMS~\cite{Laurencot:2017}.

The strain gradient models for the incompressible materials have been studied in~\cite{FleckHutchinson:1997} from the mechanical aspect of view. We refer to~\cite{Bene:2018} for a mathematical study on a nonlinear gradient model for the locking materials. Six mixed finite elements have been presented to approximate the Toupin-Mindlin model for the incompressible materials in~\cite{ShuKingFleck:1999}. Based on a reformulation of the Fleck-Hutchinson model,~\cite{Wei:2006} proposed a novel finite element that successfully predicted the stress field around the crack tip for the incompressible and nearly incompressible materials.~\cite{Fisher:2011} tested a mixed finite element approximation of a simplified version of~\eqref{eq:sgbvp}. Recently, ~\cite{Tian:2021,LiaoMingXu:2023,Huang:2023} proposed several elements to discretize~\eqref{eq:sgbvp} with an essential boundary condition and proved that the proposed elements converge uniformly in both $\iota$ and $\lambda$, i.e., they are {\em locking-free}~\cite{Babuska:1992,Ainsworth:2022}.

In this work, we focus on~\eqref{eq:sgbvp} posed on a convex polygon with the natural boundary condition because it is more practical in engineering~\cite{Wells:2004,Molari:2006,zervos:20091,Fisher:2011}. Following~\cite{LiaoMingXu:2023}, we study a finite element method based on the $\boldsymbol{u}$-$p$ mixed variational formulation with $p=\lam\divop\boldsymbol{u}$, and a saddle point formulation with a penalty term arises. Though the continuous inf-sup condition may not hold for a polygon $\Omega$, we successfully prove a discrete inf-sup condition for certain finite element pair in~Lemma~\ref{lema:infsup}, which ensures the well-posedness of this mixed discretization problem and is key to derive the error bounds.

In the Appendix, using the Agmon-Douglis-Nirenberg (ADN) theory~\cite{ADN:1959, Agmon:1964}, we prove that the continuous inf-sup condition holds true and the $\lambda$-independent estimate 
\[
    \nm{\boldsymbol{u}-\boldsymbol{u}_0}{H^1}+\nm{p-p_0}{L^2}\le C\iota\nm{\boldsymbol{f}}{L^2}
\]
is valid when $\Om$ is smooth, here $\boldsymbol{u}_0$ is the solution of linear elasticity model with homogeneous displacement boundary condition and $p_0=\lambda\divop\boldsymbol{u}_0$. By contrast to the essential boundary value problem studied in~\cite{LiaoMingXu:2023}, we mainly focus on checking that the boundary condition are complementing. Unfortunately, we cannot choose a special outward normal to verify the complementing boundary as in~\cite[Appendix D]{BochevGunzburg:2009} because the system under study is not rotation invariant. The estimate for a polygon $\Omega$ presents significant challenges. Although this issue falls outside the scope of our current discussion, it is related to the regularity estimates in linear elasticity~\cite{BacutaBramble:2003}, which are based on the higher-order regularity results for the Stokes problem. Nonetheless, we may assert that the distance between $\boldsymbol{u}$ and $\boldsymbol{u}_0$ is independent of $\lambda$, which suffices for our paper.

The outline of the paper is as follows. In \S~\ref{sec:Hardy}, we prove the broken Hardy inequality on discontinuous finite element spaces. In~\S~\ref{sec:sge} we construct a nonconforming finite element together with the continuous linear Lagrange finite element to discretize~\eqref{eq:sgbvp}. We prove the nearly optimal rate of convergence that is uniform in both $\lam$ and $\iota$. In \S~\ref{sec:numer}, we report the numerical results, which are consistent with the theoretical predictions. Regularity estimate of~\eqref{eq:sgbvp} is proved for the smooth bounded domain in the Appendix.

Throughout this paper, we assume that the constant $C$ is independent of the mesh size $h$, the materials parameter $\iota$ and the Lam\'e constant $\lambda$. We may drop $\Om$ in $\nm{\cdot}{H^m(\Om)}$  when no confusion occurs. 
\section{A broken Hardy inequality on finite element spaces}\label{sec:Hardy}
Let $B_1^d$ be the unit ball in $\mb R^d$ and $\mc T_h(B_1^d\backslash\{\boldsymbol{0}\})$ be a triangulation of  $B_1^d\backslash\{\boldsymbol{0}\}$ by simplex with maximum mesh size $h$. We assume that all elements $K$ in $\mc T_h$ are shape-regular in the sense of Ciarlet and Raviart~\cite{Ciarlet:2002}, i.e., there exists $\sigma_1>0$ such that
\(h_K\le\sigma_1\rho_K\) for all $K\in\mc{T}_h$, where $h_K=\mathrm{diam}K$ and $\rho_K$ is the diameter of the largest ball inscribed into $K$. In addition, we assume that $\mc T_h$ satisfies the inverse assumption: There exists $\sigma_2>0$ such that
\(h\le\sigma_2h_K\) for all $K\in\mc{T}_h$.

For any $m\in\mathbb{N}$ and $1\le p\le\infty$, the space of piecewise $W^{m,p}$-functions is defined by
\[
W^{m,p}(\mc{T}_h){:}=\set{\boldsymbol f\in L^p(B_1^d)}{\boldsymbol f|_K\in[W^{m,p}(K)]^d\quad\text{for all\quad}K\in\mc{T}_h},
\]
which is equipped with the broken norm
\[
\nm{\boldsymbol f}{W^{m,p}(\mc{T}_h)}^p{:}=\sum_{k=0}^m\nm{\nabla_h^kf}{L^p(B_1^d)}^p\qquad\text{with}\quad
\nm{\na^k_h f}{L^p(B_1^d)}^p{:}=\sum_{K\in\mc{T}_h}\nm{\na^k f}{L^p(K)}^p.
\]
Denote by  $\mc{V}(K)$ the set of all vertices of $K$, and $\mc{V}(\mc{T}_h)$ means the set of all vertices of the triangulation, and let $\mc{F}_h^i$ be the all interior faces (or edges when $d=2$). For an interior face $F\in\mc{F}_h^i$ shared by $K^+$ and $K^-$, and let $\boldsymbol{n}_1$ and $\boldsymbol{n}_2$ be the outward unit normal at $F$ of $K^+$ and $K^-$, respectively. Given $f^+=f|_{K^+}$ and $f^-=f|_{K^-}$, we define the jump of $f$ across $F$ as
\[
\jump{f}|_F{:}=f^+\boldsymbol{n}_1+f^-\boldsymbol{n}_2.
\]
For any boundary face (edge) $F$, we set $\jump{f}|_F=f\boldsymbol{n}$.

\begin{theorem}~\label{thm:Hardy}
There exists a constant $C$ depending on $\sigma_1,\sigma_2,d,r$ but independent of $h$ such that for sufficient small $h>0$, there holds 
\begin{equation}\label{eq:thmHardy}
    \nm{f/\rho}{L^d(B_1^d)}\le C\log(1/h)\Lr{\nm{\nabla_hf}{L^d(B_1^d)}+\lr{\sum_{F\in\mc{F}_h^i}\nm{\jump f}{L^\infty(F)}^d}^{1/d}},
\end{equation}
for any $f\in L^d(B_1^d)$ satisfying $f|_K\in\mb{P}_r(K)$ for all $K\in\mc{T}_h(B_1^d\backslash\{\boldsymbol{0}\})$ and $f(\boldsymbol{0})=0$, with a nonnegative integer $r$ and $\rho=\abs{\boldsymbol{x}}$.
\end{theorem}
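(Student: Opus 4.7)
The plan is to perform a dyadic decomposition around the origin into $J\sim\log_2(1/h)$ annular layers and to extract the logarithmic factor from telescoping their means. Let $\Omega_0$ denote the union of the $O(1)$ elements touching $\boldsymbol 0$ (contained in $B_{Ch}$ by shape regularity), and set $A_j=\{\boldsymbol x:2^{-j-1}\le|\boldsymbol x|\le 2^{-j}\}$ for $j=0,\dots,J$ with $J=\lceil\log_2(1/h)\rceil$. On each $K\subset\Omega_0$ the polynomial $f|_K$ satisfies $f|_K(\boldsymbol 0)=0$, hence $|f(\boldsymbol x)|\le|\boldsymbol x|\|\nabla f\|_{L^\infty(K)}$; combined with the inverse estimate $\|\nabla f\|_{L^\infty(K)}\le Ch_K^{-1}\|\nabla f\|_{L^d(K)}$ this already gives $\|f/\rho\|_{L^d(\Omega_0)}\le C\|\nabla_h f\|_{L^d(\Omega_0)}$, so the innermost contribution is absorbed directly.

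On each outer annulus $A_j$ I would exploit $1/\rho\lesssim 2^j$ and split $f=(f-\bar f_j)+\bar f_j$ with $\bar f_j$ the mean of $f$ on $A_j$. The oscillation part is then controlled by a rescaled broken Poincar\'e inequality
\begin{equation*}
\|f-\bar f_j\|_{L^d(A_j)}^d\le C2^{-jd}\Lr{\|\nabla_h f\|_{L^d(A_j)}^d+\sum_{F\subset A_j}\|\jump{f}\|_{L^\infty(F)}^d},
\end{equation*}
which I would obtain by dilating $A_j$ to unit scale and invoking a standard broken Poincar\'e--Friedrichs estimate, together with the equivalence $\|\jump{f}\|_{L^d(F)}^d\sim h_F^{d-1}\|\jump{f}\|_{L^\infty(F)}^d$ valid for piecewise polynomials of bounded degree. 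Multiplying by $2^{jd}$ absorbs the $2^{-jd}$ on the right exactly, while the mean part contributes $|\bar f_j|^d$ up to a constant.

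To control $\sum_{j=0}^J|\bar f_j|^d$, I would apply the same rescaled broken Poincar\'e on the pair $A_j\cup A_{j+1}$:
\begin{equation*}
|\bar f_j-\bar f_{j+1}|^d\le C\Lr{\|\nabla_h f\|_{L^d(A_j\cup A_{j+1})}^d+\sum_{F\subset A_j\cup A_{j+1}}\|\jump{f}\|_{L^\infty(F)}^d},
\end{equation*}
and bound the innermost mean $|\bar f_J|$ by tracing a chain of $O(1)$ elements from $A_J$ into $\Omega_0$, picking up one face jump $\|\jump{f}\|_{L^\infty(F)}$ and one inverse-bounded polynomial variation per step, and terminating with $f(\boldsymbol 0)=0$ at the boundary element. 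Telescoping $\bar f_j=\bar f_J+\sum_{k=j}^{J-1}(\bar f_k-\bar f_{k+1})$ and applying H\"older to the $J$ summands gives $|\bar f_j|^d\le CJ^{d-1}(\cdots)$, so summing $j=0,\dots,J$ produces a $J^d$ factor and, after the $d$-th root, the advertised $\log(1/h)$ factor.

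The main obstacle will be establishing the broken Poincar\'e inequality with the $\ell^d$-sum of $L^\infty$-face-jump norms at the correct scale-free strength, uniformly in $j$: the dilated annulus must preserve shape regularity and the inverse assumption, and the face weights must balance so that no spurious $h_F$ survives after rescaling. Once this scaled broken Poincar\'e is in hand, the telescoping of means and the chain-of-elements argument near the origin are routine applications of H\"older and the inverse inequalities.
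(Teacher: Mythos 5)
Your argument is correct in outline, but it takes a genuinely different route from the paper. The paper writes $f(\boldsymbol{x})=\boldsymbol{x}\cdot\int_0^1\nabla f(t\boldsymbol{x})\,\mathrm{d}t$ along rays from the origin, handles the initial segment inside the patch $\Omega_0$ by the inverse inequality, and extracts the logarithm from $\int_{ch}^1 t^{-1}\,\mathrm{d}t$ after an application of the generalized Minkowski inequality; the discontinuity is then removed once and for all by a global Oswald-type nodal-averaging interpolant $f_I$, whose distance to $f$ is controlled by $h$ times exactly the jump functional appearing in \eqref{eq:thmHardy}. You instead extract the logarithm combinatorially, from the number $J\sim\log_2(1/h)$ of dyadic shells, via scaled broken Poincar\'e inequalities on each shell and H\"older applied to the telescoped shell means; your accounting ($J^{d-1}$ from H\"older per mean, one more $J$ from summing over shells, then the $d$-th root) does land on the advertised single power of $\log(1/h)$, and the jump functional $\sum_F\nm{\jump{f}}{L^\infty(F)}^d$ is dilation-invariant in dimension $d$, so it survives the rescaling of each shell with no spurious mesh weights, as you anticipated. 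The obstacle you flag — a mesh-size-uniform broken Poincar\'e with the $\ell^d$-sum of $L^\infty$ face jumps — is real but resolvable, and its proof is essentially the same averaging-interpolant device the paper deploys globally, so the two arguments share that ingredient even though they organize it differently. Two technical points you should make explicit in a full writeup: (i) the dyadic annuli are not unions of elements, so you must work with fattened shells $\widetilde A_j$ (unions of elements meeting $A_j$), which retain $\rho\sim 2^{-j}$ and have bounded overlap only while $2^{-j}\gtrsim h$; the $O(1)$ innermost shells with $2^{-j}\lesssim h$ degenerate to element scale and must be folded into your chain-to-the-origin argument together with the $\Omega_0$ estimate; and (ii) the condition $f(\boldsymbol 0)=0$ must be read on each element of $\mc{T}_{\boldsymbol 0}$ separately (as the paper implicitly does), so that the terminal step of your chain closes. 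What your route buys is modularity — each shell estimate is local and scale-free, which would adapt more readily to other weights or to domains star-shaped in a weaker sense — at the cost of a longer bookkeeping argument than the paper's one-line ray integral.
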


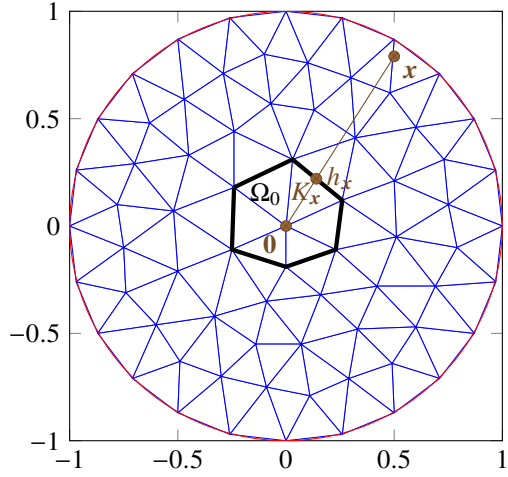
\begin{figure}[htbp]\centering\begin{tikzpicture}\begin{axis}[axis equal image,xmin=-1,xmax=1,ymin=-1,ymax=1]
    \addplot[patch,patch type=triangle,color=white,faceted color=blue]coordinates{(-0.09,-0.35)(-0.25,-0.11)(-0.33,-0.42)(0.42,-0.28)(0.33,-0.48)(0.60,-0.47)(0.33,-0.48)(0.42,-0.28)(0.16,-0.32)(-0.14,-0.55)(-0.33,-0.42)(-0.30,-0.70)(0.42,-0.28)(0.69,-0.28)(0.51,-0.05)(0.23,-0.11)(0.16,-0.32)(0.42,-0.28)(-0.24,0.44)(-0.07,0.57)(-0.29,0.71)(-0.09,-0.35)(-0.14,-0.55)(0.07,-0.55)(0.13,0.54)(0.32,0.41)(0.30,0.69)(0.23,-0.11)(0.42,-0.28)(0.51,-0.05)(-0.24,0.18)(-0.52,0.07)(-0.25,-0.11)(-0.24,0.18)(-0.24,0.44)(-0.45,0.33)(0.03,0.31)(-0.07,0.57)(-0.24,0.44)(0.03,0.31)(-0.24,0.44)(-0.24,0.18)(-0.70,0.29)(-0.87,0.50)(-0.97,0.26)(0.00,0.00)(-0.24,0.18)(-0.25,-0.11)(-0.00,-0.19)(-0.25,-0.11)(-0.09,-0.35)(-0.70,0.29)(-0.63,0.48)(-0.87,0.50)(-0.75,-0.10)(-1.00,-0.00)(-0.97,-0.26)(0.73,0.31)(0.97,0.26)(0.87,0.50)(-0.74,-0.31)(-0.97,-0.26)(-0.87,-0.50)(0.73,0.31)(0.87,0.50)(0.60,0.46)(-0.09,-0.35)(0.07,-0.55)(0.16,-0.32)(0.10,0.76)(0.26,0.97)(-0.00,1.00)(0.30,0.69)(0.50,0.87)(0.26,0.97)(-0.00,-0.19)(0.00,0.00)(-0.25,-0.11)(0.48,0.62)(0.71,0.71)(0.50,0.87)(0.23,-0.11)(0.51,-0.05)(0.26,0.12)(-0.00,-0.19)(-0.09,-0.35)(0.16,-0.32)(0.30,0.69)(0.26,0.97)(0.10,0.76)(-0.78,0.10)(-0.97,0.26)(-1.00,-0.00)(-0.30,-0.70)(-0.50,-0.87)(-0.26,-0.97)(0.03,0.31)(0.32,0.41)(0.13,0.54)(0.03,0.31)(0.26,0.12)(0.32,0.41)(0.03,0.31)(0.13,0.54)(-0.07,0.57)(0.00,0.00)(0.23,-0.11)(0.26,0.12)(0.00,0.00)(0.26,0.12)(0.03,0.31)(-0.74,-0.31)(-0.87,-0.50)(-0.60,-0.46)(0.00,0.00)(0.03,0.31)(-0.24,0.18)(-0.00,-0.19)(0.16,-0.32)(0.23,-0.11)(-0.00,-0.19)(0.23,-0.11)(0.00,0.00)(0.48,0.62)(0.60,0.46)(0.71,0.71)(0.73,0.31)(0.75,0.10)(0.97,0.26)(0.79,-0.10)(0.97,-0.26)(1.00,0.00)(0.69,-0.28)(0.87,-0.50)(0.97,-0.26)(0.60,-0.47)(0.71,-0.71)(0.87,-0.50)(-0.09,-0.75)(-0.26,-0.97)(0.00,-1.00)(-0.09,-0.75)(0.00,-1.00)(0.11,-0.79)(0.11,-0.79)(0.00,-1.00)(0.26,-0.97)(0.60,-0.47)(0.49,-0.64)(0.71,-0.71)(-0.63,0.48)(-0.71,0.71)(-0.87,0.50)(-0.29,0.71)(-0.26,0.97)(-0.50,0.87)(0.49,-0.64)(0.50,-0.87)(0.71,-0.71)(-0.10,0.80)(-0.26,0.97)(-0.29,0.71)(-0.63,0.48)(-0.47,0.59)(-0.71,0.71)(-0.49,-0.63)(-0.71,-0.71)(-0.50,-0.87)(-0.74,-0.31)(-0.75,-0.10)(-0.97,-0.26)(-0.07,0.57)(0.10,0.76)(-0.10,0.80)(0.29,-0.71)(0.26,-0.97)(0.50,-0.87)(0.29,-0.71)(0.11,-0.79)(0.26,-0.97)(-0.49,-0.63)(-0.60,-0.46)(-0.71,-0.71)(-0.60,-0.46)(-0.87,-0.50)(-0.71,-0.71)(-0.09,-0.75)(-0.30,-0.70)(-0.26,-0.97)(-0.49,-0.63)(-0.50,-0.87)(-0.30,-0.70)(-0.47,0.59)(-0.50,0.87)(-0.71,0.71)(-0.47,0.59)(-0.29,0.71)(-0.50,0.87)(-0.07,0.57)(-0.10,0.80)(-0.29,0.71)(0.60,0.46)(0.87,0.50)(0.71,0.71)(-0.50,-0.21)(-0.74,-0.31)(-0.60,-0.46)(0.60,-0.47)(0.87,-0.50)(0.69,-0.28)(-0.33,-0.42)(-0.60,-0.46)(-0.49,-0.63)(0.50,0.21)(0.75,0.10)(0.73,0.31)(0.49,-0.64)(0.29,-0.71)(0.50,-0.87)(-0.24,0.44)(-0.29,0.71)(-0.47,0.59)(0.79,-0.10)(0.69,-0.28)(0.97,-0.26)(-0.24,0.44)(-0.47,0.59)(-0.45,0.33)(0.75,0.10)(1.00,0.00)(0.97,0.26)(0.75,0.10)(0.79,-0.10)(1.00,0.00)(0.42,-0.28)(0.60,-0.47)(0.69,-0.28)(0.51,-0.05)(0.69,-0.28)(0.79,-0.10)(0.51,-0.05)(0.79,-0.10)(0.75,0.10)(-0.78,0.10)(-0.70,0.29)(-0.97,0.26)(-0.45,0.33)(-0.47,0.59)(-0.63,0.48)(0.07,-0.55)(0.11,-0.79)(0.29,-0.71)(-0.10,0.80)(-0.00,1.00)(-0.26,0.97)(-0.10,0.80)(0.10,0.76)(-0.00,1.00)(0.07,-0.55)(-0.09,-0.75)(0.11,-0.79)(0.26,0.12)(0.50,0.21)(0.32,0.41)(-0.14,-0.55)(-0.30,-0.70)(-0.09,-0.75)(-0.33,-0.42)(-0.50,-0.21)(-0.60,-0.46)(-0.50,-0.21)(-0.75,-0.10)(-0.74,-0.31)(-0.45,0.33)(-0.63,0.48)(-0.70,0.29)(0.48,0.62)(0.50,0.87)(0.30,0.69)(-0.33,-0.42)(-0.49,-0.63)(-0.30,-0.70)(-0.78,0.10)(-1.00,-0.00)(-0.75,-0.10)(0.51,-0.05)(0.75,0.10)(0.50,0.21)(0.33,-0.48)(0.49,-0.64)(0.60,-0.47)(0.32,0.41)(0.48,0.62)(0.30,0.69)(-0.24,0.18)(-0.45,0.33)(-0.52,0.07)(0.32,0.41)(0.60,0.46)(0.48,0.62)(-0.25,-0.11)(-0.52,0.07)(-0.50,-0.21)(-0.52,0.07)(-0.78,0.10)(-0.75,-0.10)(0.50,0.21)(0.73,0.31)(0.60,0.46)(0.50,0.21)(0.60,0.46)(0.32,0.41)(0.13,0.54)(0.30,0.69)(0.10,0.76)(0.13,0.54)(0.10,0.76)(-0.07,0.57)(-0.52,0.07)(-0.75,-0.10)(-0.50,-0.21)(-0.52,0.07)(-0.70,0.29)(-0.78,0.10)(0.33,-0.48)(0.29,-0.71)(0.49,-0.64)(-0.14,-0.55)(-0.09,-0.75)(0.07,-0.55)(0.33,-0.48)(0.07,-0.55)(0.29,-0.71)(-0.25,-0.11)(-0.50,-0.21)(-0.33,-0.42)(-0.52,0.07)(-0.45,0.33)(-0.70,0.29)(0.16,-0.32)(0.07,-0.55)(0.33,-0.48)(0.26,0.12)(0.51,-0.05)(0.50,0.21)(-0.09,-0.35)(-0.33,-0.42)(-0.14,-0.55)};
    \addplot[red,samples=100, domain=0:2*pi]( {cos(deg(x))},{sin(deg(x))});
    \addplot coordinates{(0,0)(0.14,0.22)(0.5,0.79)}node[pos=0,anchor=north east](A){$\boldsymbol{0}$}node[pos=.19]{$K_{\boldsymbol{x}}$}node[pos=.28,anchor=west]{$h_{\boldsymbol{x}}$}node[pos=1,anchor=north west]{$\boldsymbol{x}$};
    \addplot[ultra thick,mark=none]coordinates{(-0.24,0.18)(0.03,0.31)(0.26,0.12)(0.23,-0.11)(-0.00,-0.19)(-0.25,-0.11)(-0.24,0.18)}node[pos=.1,anchor=north]{$\Omega_0$};
\end{axis}\end{tikzpicture}\caption{Diagram for the proof of the broken Hardy inequality when $d=2$}\label{fig:Hardy}\end{figure}

\begin{proof}
Firstly, we assume that $f$ is continuous. For any $\boldsymbol{x}\in B_1^d$,
\begin{equation}\label{eq:thmHardy0}
     f(\boldsymbol{x})=\boldsymbol{x}\cdot\int_0^1\nabla f(t\boldsymbol{x})\mathrm{d}t.
\end{equation}
Let $\mc{T}_{\boldsymbol{a}}$ be the set of elements sharing a common vertex $\boldsymbol{a}$ and $\Omega_0{:}=\cup_{K\in\mc T_{\boldsymbol{0}}}K$. First, for any $K\in\mc T_{\boldsymbol{0}}$ and $\boldsymbol{x}\in K$, using the inverse inequality, we obtain
\[
\abs{f(\boldsymbol{x})}\le\rho\nm{\nabla f}{L^\infty(K)}\le C_{\text{inv}}h_K^{-1}\rho\nm{\nabla f}{L^d(K)}.
\]
This immediately implies
\[
\nm{f/\rho}{L^d(K)}\le C_{\text{inv}}\abs{K}^{1/d}h_K^{-1}\nm{\nabla f}{L^d(K)}\le C_{\text{inv}}\nm{\nabla f}{L^d(K)}.
\]
Summing up all $K\in\mc T_{\boldsymbol{0}}$, we obtain
\begin{equation}\label{eq:thmHardy1}
    \nm{f/\rho}{L^d(\Omega_0)}
    \le C_{\text{inv}}\nm{\nabla_hf}{L^d(\Omega_0)}.
\end{equation}

Second, for any $\boldsymbol{x}\not\in\Omega_0$, let $K_{\boldsymbol{x}}\in\mc T_{\boldsymbol{0}}$ be the first element that the line segment  $\overline{\boldsymbol{0x}}$ passes through, and $h_{\boldsymbol{x}}$ be the length between $\boldsymbol{0}$ and the intersection point of $\overline{\boldsymbol{0x}}$ and $\partial K_{\boldsymbol{x}}$; see Figure~\ref{fig:Hardy} for a diagram when $d=2$. We may write~\eqref{eq:thmHardy0} as
\[
f(\boldsymbol{x})=\boldsymbol{x}\cdot\int_0^{h_{\boldsymbol{x}}/\rho}\nabla f(t\boldsymbol{x})\mathrm{d}t+\boldsymbol{x}\cdot\int_{h_{\boldsymbol{x}}/\rho}^1\nabla f(t\boldsymbol{x})\mathrm{d}t.
\]
Proceeding along the same line that leads to~\eqref{eq:thmHardy1} because $t\boldsymbol{x}\in K_{\boldsymbol{x}}$ for any $t\in (0,h_{\boldsymbol{x}}/\rho)$, we obtain
\[
\abs{f(\boldsymbol{x})}\le C_{\text{inv}}\nm{\nabla f}{L^d(K_{\boldsymbol{x}})}+\rho\int_{ch}^1\abs{\nabla f(t\boldsymbol{x})}\mathrm{d}t
\]
with $c=2/(\sigma_1\sigma_2)$. By the generalized Minkowski inequality,
\begin{align*}
    \nm{f/\rho}{L^d(B_1^d\backslash\Omega_0)}&\le C_{\text{inv}}\nm{\nabla f}{L^d(\Omega_0)}\nm{1/\rho}{L^d(B_1^d\backslash\Omega_0)}+\int_{ch}^1\nm{\nabla f(t\cdot)}{L^d(B_1^d\backslash\Omega_0)}\mathrm{d}t\\
    &\le C_{\text{inv}}\omega_d^{1/d}\log^{1/d}(1/ch)\nm{\nabla f}{L^d(\Omega_0)}+\int_{ch}^1\dfrac{1}{t}\mathrm{d}t\nm{\nabla f}{L^d(B_1^d)}\\
    &\le\Lr{C_{\text{inv}}\omega_d^{1/d}\log^{1/d}(1/ch)+\log(1/ch)}
    \nm{\nabla f}{L^d(B_1^d)}.
\end{align*}
Combining the above inequality with~\eqref{eq:thmHardy1}, we obtain
\[
\nm{f/\rho}{L^d(B_1^d)}\le 2\log(1/ch)\nm{\nabla f}{L^d(B_1^d)}
\]
for sufficiently small $h$, e.g., $h<h_0$ with
\[
h_0=\dfrac{\sigma_1\sigma_2}{2}\min\Lr{e^{-1/\omega_d},e^{-(2C_{\text{inv}}\omega_d^{1/d})^{d'}}}.
\]

In what follows, we extend~\eqref{eq:thmHardy} to the  continuous piecewise function $f$. We only need to prove the inequality over $B_1^d\backslash\Omega_0$ because~\eqref{eq:thmHardy1} remains true. For each $f$, we define a continuous relative $f_I$ as: 
\[
f_I\in\set{f\in C(B_1^d)}{f|_K\in\mb{P}_1(K)\quad\text{for all }K\in\mc{T}_h(B_1^d\backslash\{\boldsymbol{0}\})},
\]
uniquely determined by
\[
    f_I(\boldsymbol{a})=\dfrac{1}{\abs{\mc{T}_{\boldsymbol{a}}}}\sum_{K\in\mc{T}_{\boldsymbol{a}}}f|_K(\boldsymbol{a})\qquad\text{for all }\boldsymbol{a}\in\mc{V}(\mc{T}_h(B_1^d\backslash\{\boldsymbol{0}\})).
\]
$f_I$ is continuous and $f_I(\boldsymbol{0})=f(\boldsymbol{0})=0$, and for sufficiently small $h$, 
\[
\nm{f_I/\rho}{L^d(B_1^d)}\le 2\log(1/ch)\nm{\nabla f_I}{L^d(B_1^d)}.
\]

It remains to evaluate the distance between $f$ and $f_I$, which may be proceeded as in~\cite{Brenner:2004Sobolev}. 
For any $K\in\mc{T}_h(B_1^d\backslash\{\boldsymbol{0}\})$ and $\boldsymbol{a}\in\mc{V}(K)$, there exists $C$ depends only on $\sigma_1$ such that
\[
(f|_K-f_I)(\boldsymbol{a})=\dfrac{1}{\abs{\mc{T}_{\boldsymbol{a}}}}\sum_{K'\in\mc{T}_{\boldsymbol{a}}}(f|_K-f|_{K'})(\boldsymbol{a})\le\sum_{F\in\mc{F}_h^i\text{ such that }\boldsymbol{a}\in F}\nm{\jump f}{L^\infty(F)}.
\]

On each element $K$, we define the local interpolant 
$\mathcal{I}_Kf$ by $\mathcal{I}_Kf(\boldsymbol{a})=f(\boldsymbol{a})$ for $\boldsymbol{a}\in\mc{V}(K)$. Using the scaling trick and the standard interpolate estimate, there exists $C$ depending on $\sigma_1$ and $d$ such that
\begin{align*}
\nm{f-f_I}{L^d(K)}&\le\nm{f-\mathcal{I}_Kf}{L^d(K)}+\nm{\mathcal{I}_Kf-f_I}{L^d(K)}\\
&\le Ch\Lr{\nm{\nabla f}{L^d(K)}+\sum_{F\in\mc{F}_h^i\text{ such that }\boldsymbol{a}\in F}\nm{\jump f}{L^\infty(F)}}.
\end{align*}
Summing up for all $K\in\mc{T}_h(B_1^d\backslash\{\boldsymbol{0}\})$, we obtain
\begin{equation}\label{eq:thmHardy2}
\nm{f-f_I}{L^d(B_1^d)}\le Ch\Lr{\nm{\nabla_hf}{L^d(B_1^d)}+\lr{\sum_{F\in\mc{F}_h^i}\nm{\jump f}{L^\infty(F)}^d}^{1/d}},
\end{equation}
which together with the triangle inequality and the inverse inequality, we obtain, for sufficiently small $h$, there holds
\begin{align*}
\nm{f/\rho}{L^d(B_1^d\backslash\Omega_0)}&\le\nm{f_I/\rho}{L^d(B_1^d\backslash\Omega_0)}+\nm{(f-f_I)/\rho}{L^d(B_1^d\backslash\Omega_0)}\\
&\le 2\log(1/ch)\nm{\nabla f_I}{L^d(B_1^d)}+ch^{-1}\nm{f-f_I}{L^d(B_1^d\backslash\Omega_0)}\\
&\le 2\log(1/ch)\nm{\nabla_hf}{L^d(B_1^d)}+(2C_{\text{inv}}\log(1/ch)+c)h^{-1}\nm{f-f_I}{L^d(B_1^d)}.
\end{align*}
Substituting \eqref{eq:thmHardy2} into the above inequality, and using~\eqref{eq:thmHardy1}, we obtain~\eqref{eq:thmHardy}.
\end{proof}

\begin{remark}
We conjecture that the logarithmic factor in the above theorem is optimal. A typical example is a continuous radial function $f(\boldsymbol{x})=f_0(\rho)$ satisfying $f_0(0)=0$ and $f_0^{\prime}=1/i$ for $(i-1)h<\rho<ih$, here $i=1,2,\dots,n$ and $n=1/h$. Then, for $(i-1)h\le\rho<ih$, 
\[
f_0(\rho)=\dfrac{\rho}{i}+h\sum_{j=1}^{i-1}\dfrac{1}{j}\ge h\log i,\qquad i=1,2,\dots n.
\]
A direct calculation gives 
\begin{align*}
\dfrac{1}{\omega_d}\int_{B_1^d}\dfrac{\abs{f}^d}{\abs{\boldsymbol{x}}^d}\mathrm{d}\boldsymbol{x}
&=\int_0^1\dfrac{\abs{f_0}^d}{\rho}\mathrm{d}\rho\ge h^d\sum_{i=2}^n\log^di\int_{(i-1)h}^{ih}\dfrac{1}{\rho}\mathrm{d}\rho\\
&\ge h^d\sum_{i=2}^n\dfrac{1}{i}\log^di\\
&\ge\dfrac{h^d}{d+1}(\log^{d+1}n-1),
\end{align*}
and
\begin{align*}
\dfrac{1}{\omega_d}\int_{B_1^d}\abs{\nabla f}^d\mathrm{d}\boldsymbol{x}&=\int_0^1\rho^{d-1}\abs{f_0'}^d\mathrm{d}\rho=\sum_{i=1}^n\int_{(i-1)h}^{ih}\dfrac{\rho^{d-1}}{i^d}\mathrm{d}\rho\\
&\le h^d\sum_{i=1}^n\dfrac{1}{i}\le h^d(\log n+1).
\end{align*}
Therefore, $\nm{f/\rho}{L^d(B_1^d)}\sim\mc{O}(h\log^{1+1/d}(1/h))$ and $\nm{\nabla f}{L^d(B_1^d)}\sim\mc{O}(h\log^{1/d}(1/h))$. 

When $d=1$, this conjecture is clear since $f|_K\in\mb{P}_1(K)$ for all $K\in\mc{T}_h((-1,0)\cup(0,1))$ strictly. When $d\ge 2$, this example remains instructive since $f$, when restricted to any radial direction, is still a finite element function.
\end{remark}
\begin{coro}
Let $K\in\mc{T}_h(B_1^d\backslash\{\boldsymbol{0}\}), f|_{K}\in\mb P_r(K)$ and $f(\boldsymbol{0})=0$. Assume further that $f$ satisfies the \textbf{consistency condition}:   
\begin{equation}\label{def:csst}
\nm{\jump{f}}{L^\infty(F)}\le Ch\nm{\nabla_hf}{L^\infty(K^+\cup K^-)}\quad\text{for all }F\in\mc F_h^i,
\end{equation}
where $K^+,K^-\in\mc{T}_h(B_1^d\backslash\{\boldsymbol{0}\})$ and $F=K^+\cap K^-$. There exists a constant $C$ depending only on $\sigma_1,\sigma_2,d,r$ but independent of $h$ such that 
\begin{equation}\label{eq:hardy3}
    \nm{f/\rho}{L^d(B_1^d)}\le C\log(1/h)\nm{\nabla_hf}{L^d(B_1^d)}.
\end{equation}
\end{coro}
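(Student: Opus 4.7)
The plan is to deduce the corollary directly from Theorem~\ref{thm:Hardy} by controlling the jump contribution on its right-hand side via the consistency condition~\eqref{def:csst} together with a standard inverse inequality. Concretely, I would establish
\[
\sum_{F\in\mc{F}_h^i}\nm{\jump f}{L^\infty(F)}^d\le C\,\nm{\nabla_h f}{L^d(B_1^d)}^d
\]
and then insert this estimate into~\eqref{eq:thmHardy}, which immediately produces~\eqref{eq:hardy3}.

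To obtain the displayed bound, fix an interior face $F=K^+\cap K^-$. The consistency condition yields $\nm{\jump{f}}{L^\infty(F)}\le Ch\,\nm{\nabla_h f}{L^\infty(K^+\cup K^-)}$. Since $f|_K\in\mb{P}_r(K)$, the standard inverse inequality applied to the polynomial $\nabla f$ (with exponent $h_K^{-d/d}=h_K^{-1}$) gives $\nm{\nabla f}{L^\infty(K)}\le Ch_K^{-1}\nm{\nabla f}{L^d(K)}$ on each element, and the inverse assumption $h\le\sigma_2 h_K$ then absorbs the prefactor $h$, producing
\[
\nm{\jump{f}}{L^\infty(F)}^d\le C\,\nm{\nabla_h f}{L^d(K^+\cup K^-)}^d.
\]
Summing over all $F\in\mc{F}_h^i$ and invoking shape-regularity --- each element is adjacent to at most $d+1$ interior faces and therefore contributes a uniformly bounded number of times to the sum --- delivers the desired estimate.

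There is no substantive obstacle in this argument; the only point deserving genuine care is the scaling bookkeeping that trades the factor $h$ from~\eqref{def:csst} against the $h_K^{-1}$ coming from the inverse inequality, which is exactly where the inverse assumption on $\mc{T}_h$ enters. It is worth noting that the consistency condition is stated in this unweighted $L^\infty$ form precisely so that, after the inverse inequality, no additional logarithmic factor is incurred in~\eqref{eq:hardy3} beyond the one already present in Theorem~\ref{thm:Hardy}.
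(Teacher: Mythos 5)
Your proposal is correct and follows essentially the same route as the paper: bound the jump term in Theorem~\ref{thm:Hardy} using the consistency condition together with the $L^\infty$--$L^d$ inverse inequality for the piecewise polynomial $\nabla_h f$, so that the factor $h$ cancels $h_K^{-1}$ via the inverse assumption, and then sum over faces using the bounded overlap from shape regularity. The paper's proof is just a more compressed version of the same argument.
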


\begin{proof}
By the consistency condition~\eqref{def:csst} and the inverse inequality for $f$,
\[
\lr{\sum_{F\in\mc F_h^i}\nm{\jump{f}}{L^\infty(F)}^d}^{1/d}\le C\lr{\sum_{F\in\mc F_h^i}\nm{\nabla_hf}{L^d(K^+\cup K^-)}^d}^{1/d}\le C\nm{\nabla_hf}{L^d(B_1^d)}.
\]
Therefore, the second term in~\eqref{eq:thmHardy} is bounded by the first one, and we get~\eqref{eq:hardy3}.
\end{proof}

\begin{remark}
Most of the commonly used elements satisfy the consistency condition~\eqref{def:csst}, e.g. the Lagrange element, the Crouzeix-Raviart element and the Morley triangle.
\end{remark}
\section{Application for strain gradient elasticity}~\label{sec:sge}
For a bounded polygon $\Omega\subset\mb R^2$, the model~\eqref{eq:sgbvp} is equivalent to the variational problem: Find $u\in V{:}=[H_0^1(\Om)\cap H^2(\Om)]^2$ such that
\begin{equation}\label{eq:var}
    a(\boldsymbol{u},\boldsymbol{v}){:}=(\C\boldsymbol\eps(\boldsymbol u),\boldsymbol\eps(\boldsymbol v))+\iota^2(\D\na\boldsymbol\eps(\boldsymbol u),\na\boldsymbol\eps(\boldsymbol v))=(\boldsymbol{f},\boldsymbol{v})\qquad\text{for all}\quad\boldsymbol{v}\in V,
\end{equation}
where the sixth-order tensor $\D$ is defined as
\[
\D_{ijklmn}=\lam\del_{il}\del_{jk}\del_{mn}+2\mu\del_{il}\del_{jm}\del_{kn}.
\]
The strain gradient $\na\boldsymbol\eps(\boldsymbol v)$ is a third-order tensor given by $(\na\boldsymbol\eps(\boldsymbol v))_{ijk}=\partial_i\boldsymbol\eps(\boldsymbol v)_{jk}$.

Let $\iota\to 0$, we find $\boldsymbol{u}_0\in[H_0^1(\Om)]^2$ satisfying
\begin{equation}\label{eq:elas}
(\mb{C}\boldsymbol{\epsilon}(\boldsymbol{u}_0),\boldsymbol{\epsilon}(\boldsymbol{v}))=(\boldsymbol{f},\boldsymbol{v})\qquad\text{for all\;}\boldsymbol{v}\in [H_0^1(\Om)]^2.
\end{equation}
Let $p_0=\lambda\divop\boldsymbol{u}_0$. By~\cite[Theorem 3.1]{BacutaBramble:2003},
\begin{equation}\label{eq:regelas}
\nm{\boldsymbol{u}_0}{H^2}+{\nm{p_0}{H^1}}\le C\nm{\boldsymbol{f}}{L^2}.
\end{equation}

For any $q\in L^2(\Omega)\cap H^1(\Omega)$, we define a weighted norm
\(
\nm{q}{\iota}{:}=\nm{q}{L^2}+\iota\nm{\na q}{L^2}.
\)
By the Poincar\'e inequality, $\nm{\nabla\boldsymbol{v}}{\iota}$ is indeed a norm over $V$ for any $\boldsymbol{v}\in V$.

\begin{lemma}\label{lema:korn}
There holds
\begin{equation}\label{eq:1stkorn}
\nm{\boldsymbol{\epsilon}(\boldsymbol{v})}{L^2}^2\ge\dfrac12\nm{\nabla\boldsymbol{v}}{L^2}^2\qquad\text{for all\quad}\boldsymbol{v}\in [H_0^1(\Om)]^2,
\end{equation}
and 
\begin{equation}\label{eq:h2korn}
\nm{\na\boldsymbol{\epsilon}(\boldsymbol{v})}{L^2}^2\ge\Lr{1-1/\sqrt2}\nm{\na^2\boldsymbol{v}}{L^2}^2\qquad\text{for all\quad}\boldsymbol{v}\in [H^2(\Om)]^2.
\end{equation}
\end{lemma}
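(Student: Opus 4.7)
My plan is to handle the two inequalities separately, since they are of quite different nature: the first has a boundary condition that makes integration by parts available, while the second does not and must reduce to a pointwise estimate.

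For the first inequality, I would start from the pointwise expansion
\[
\abs{\boldsymbol\epsilon(\boldsymbol v)}^2 = \frac14\sum_{i,j}(\partial_iv_j+\partial_jv_i)^2 = \frac12\abs{\nabla\boldsymbol v}^2 + \frac12\sum_{i,j}\partial_iv_j\,\partial_jv_i,
\]
and then integrate over $\Omega$. For smooth $\boldsymbol v$ with zero boundary trace, integrating by parts twice (first in $\partial_i$, then in $\partial_j$, with all boundary contributions vanishing because $\boldsymbol v|_{\partial\Omega}=\boldsymbol 0$) turns the cross term into $(\divop\boldsymbol v)^2$, yielding the standard identity
\[
\nm{\boldsymbol\epsilon(\boldsymbol v)}{L^2}^2 = \frac12\nm{\nabla\boldsymbol v}{L^2}^2 + \frac12\nm{\divop\boldsymbol v}{L^2}^2.
\]
A density argument using $[C_c^\infty(\Omega)]^2$ extends this to all $\boldsymbol v\in[H_0^1(\Omega)]^2$, and dropping the nonnegative divergence term gives \eqref{eq:1stkorn}. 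This part is routine.

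For the second inequality, no boundary conditions are imposed, so integration by parts is not available, and the bound must be proved pointwise. I would again expand
\[
\abs{\nabla\boldsymbol\epsilon(\boldsymbol v)}^2 = \frac14\sum_{i,j,k}(\partial_i\partial_jv_k+\partial_i\partial_kv_j)^2 = \frac12\abs{\nabla^2\boldsymbol v}^2 + \frac12 A,
\]
where $A{:}=\sum_{i,j,k}\partial_i\partial_jv_k\,\partial_i\partial_kv_j$. Thus the claim reduces to a pointwise lower bound $A\ge(1-\sqrt2)\abs{\nabla^2\boldsymbol v}^2$. Writing out $A$ explicitly in 2D (say $\boldsymbol v=(u,w)$), the only cross terms are $2u_{xy}w_{xx}$ and $2u_{yy}w_{xy}$, so the quadratic form $\tfrac12\abs{\nabla^2\boldsymbol v}^2+\tfrac12 A - c\abs{\nabla^2\boldsymbol v}^2$ decomposes into pure squares in $u_{xx},w_{yy}$ together with two decoupled $2\times2$ blocks, one on $(u_{xy},w_{xx})$ and one on $(u_{yy},w_{xy})$, each of the form $\bigl(\tfrac32-2c\bigr)x^2+\bigl(\tfrac12-c\bigr)y^2+xy$. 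Positive semi-definiteness of this block boils down to a single scalar inequality in $c$, and tuning the splitting parameter (equivalently, applying Young's inequality to the mixed term with an optimized weight) is what produces the constant $1-1/\sqrt2$.

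The step I expect to be most delicate is the last one: the stated constant is strictly larger than the value one obtains from the naive Cauchy--Schwarz bound $A\ge-\abs{\nabla^2\boldsymbol v}^2$, so a \emph{sharp} Young-type inequality with the right weight in the block analysis is essential. An alternative route I would also try, which is cleaner in two dimensions, is the identity
\[
\abs{\nabla\boldsymbol\epsilon(\boldsymbol v)}^2 = \abs{\nabla^2\boldsymbol v}^2 - \frac12\abs{\nabla\,\curl\boldsymbol v}^2,
\]
so that the bound becomes an upper estimate of $\abs{\nabla\,\curl\boldsymbol v}^2$ in terms of $\abs{\nabla^2\boldsymbol v}^2$; this again is a pointwise Young inequality, but it exposes the geometric content clearly and tends to give sharper constants. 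Either way, the proof is purely algebraic and the only non-routine step is choosing the Young parameter to land on $1-1/\sqrt2$.
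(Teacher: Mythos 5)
Your treatment of \eqref{eq:1stkorn} is correct and is the classical argument; note that the paper itself does not prove this lemma but quotes both parts, citing \cite{Korn:1908} for \eqref{eq:1stkorn} and \cite[Theorem 1]{LiMingWang:2021} for \eqref{eq:h2korn}.

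The argument you sketch for \eqref{eq:h2korn}, however, does not close, and the gap is exactly at the step you yourself flag as delicate. Your reduction and block decomposition are correct: with target constant $c$, the form $\abs{\nabla\boldsymbol\epsilon(\boldsymbol v)}^2-c\abs{\nabla^2\boldsymbol v}^2$ splits into $(1-c)(u_{xx}^2+w_{yy}^2)$ plus two copies of $q(x,y)=(\tfrac32-2c)x^2+(\tfrac12-c)y^2+xy$. But there is no Young parameter left to tune: the splitting is forced, since $u_{xx}$ and $w_{yy}$ do not couple to the mixed terms, and positive semidefiniteness of $q$ requires $(\tfrac32-2c)(\tfrac12-c)-\tfrac14=2(c-1)(c-\tfrac14)\ge0$, i.e.\ $c\le\tfrac14$. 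Since $\tfrac14<1-1/\sqrt2\approx0.293$, the pointwise route caps out strictly below the stated constant. Moreover, the extremal direction of the block is realized by the quadratic polynomial $\boldsymbol v=(-xy/2,\,x^2/2)$, for which $\abs{\nabla\boldsymbol\epsilon(\boldsymbol v)}^2=\tfrac38$ and $\abs{\nabla^2\boldsymbol v}^2=\tfrac32$ identically, so that $\nm{\nabla\boldsymbol\epsilon(\boldsymbol v)}{L^2}^2=\tfrac14\nm{\nabla^2\boldsymbol v}{L^2}^2$ on any domain; hence no integration by parts or global argument can push the constant above $\tfrac14$ under the convention $\nm{\nabla^2\boldsymbol v}{L^2}^2=\sum_{i,j,k}\nm{\partial_i\partial_jv_k}{L^2}^2$, which double-counts the mixed second derivatives. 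Your alternative identity $\abs{\nabla\boldsymbol\epsilon(\boldsymbol v)}^2=\abs{\nabla^2\boldsymbol v}^2-\tfrac12\abs{\nabla\curl\boldsymbol v}^2$ is correct but leads to the same ceiling $\tfrac14$. So the constant $1-1/\sqrt2$ cannot be obtained by the pointwise analysis you propose; it must rest either on a different normalization of the tensor norms or on a genuinely different mechanism, and you should verify the precise statement in \cite[Theorem 1]{LiMingWang:2021} and invoke it, as the paper does, rather than attempt to reprove \eqref{eq:h2korn} along these lines.
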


The first Korn's inequality~\eqref{eq:1stkorn} may be found in~\cite{Korn:1908}, and the H$^2$-Korn's inequality~\eqref{eq:h2korn} was proved in~\cite[Theorem 1]{LiMingWang:2021}. By Lax-Milgram theorem and Lemma~\ref{lema:korn}, Problem~\eqref{eq:var} admits a unique solution $\boldsymbol{u}$.

\subsection{The mixed variational formulation}
Let $P{:}=\divop V$. This space has been clarified in~\cite{Arnold:1988} as $P=L_0^2(\Om)\cap H^1_+(\Om)$ with the semi-norm
\[
[q]_{H_+^1}^2=\sum_{\boldsymbol{a}\in\mc{V}(\Omega)}\int_\Omega\dfrac{\abs{q(\boldsymbol{x})}^2}{\abs{\boldsymbol{x}-\boldsymbol{a}}^2}\mathrm{d}\boldsymbol{x},
\]
where $\mc{V}(\Omega)$ consists of all vertices of $\Omega$, and
\(
\nm{q}{H_+^1}=\nm{q}{H^1}+[q]_{H_+^1}.
\)

\begin{lemma}[{~\cite[Theorem 3.1]{Arnold:1988}}]\label{lema:divP}
If $\Omega\subset\mb{R}^2$ is a polygon, then for any $q\in P$, there exists $\boldsymbol{v}\in V$ such that $\divop\boldsymbol{v}=q$ and
\[\nm{\boldsymbol{v}}{H^1}\le C\nm{q}{L^2},\quad\nm{\boldsymbol{v}}{H^2}\le C\nm{q}{H_+^1}.\]
\end{lemma}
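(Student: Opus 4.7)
The plan is to reduce the result to two ingredients: a classical Bogovskii lifting that delivers the $L^2$-to-$H^1$ estimate immediately, and a corner-by-corner Kondratiev-type analysis that upgrades the regularity to $H^2$ by exploiting the weighted semi-norm $[q]_{H_+^1}$. This mirrors the approach of~\cite{Arnold:1988}.

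First, since $\Omega$ is a bounded Lipschitz polygon and $q\in L_0^2(\Omega)$, the Bogovskii construction~\cite{Bog:1979,Galdi:2011} furnishes a right-inverse of the divergence operator that produces $\boldsymbol{v}_1\in[H_0^1(\Omega)]^2$ with $\divop \boldsymbol{v}_1=q$ and $\nm{\boldsymbol{v}_1}{H^1}\le C\nm{q}{L^2}$. This already settles the first bound. The nontrivial content of the lemma is therefore the $H^2$ estimate, which does not follow from Bogovskii because the divergence operator $\divop\colon [H^2\cap H_0^1]^2\to L_0^2$ fails to be surjective on a polygon --- its range is exactly cut out by the weighted semi-norm at each vertex.

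For the $H^2$ estimate, I would fix a smooth partition of unity $\{\chi_0\}\cup\{\chi_{\boldsymbol{a}}\}_{\boldsymbol{a}\in\mc{V}(\Omega)}$ in which each $\chi_{\boldsymbol{a}}$ is supported in a sector isolating one vertex $\boldsymbol{a}$, and $\chi_0$ is supported uniformly away from all vertices. Decomposing $q=q_0+\sum_{\boldsymbol{a}}q_{\boldsymbol{a}}$ with $q_{\boldsymbol{a}}=\chi_{\boldsymbol{a}}q$, adjusted by small constants so every piece is mean-zero (the corrections being absorbed into $q_0$), the piece $q_0$ only meets smooth portions of $\partial\Omega$, so a direct reflection or Bogovskii argument on a locally smooth subdomain delivers a lifting with $\nm{\boldsymbol{v}_0}{H^2}\le C\nm{q_0}{H^1}\le C\nm{q}{H_+^1}$. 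For each vertex piece $q_{\boldsymbol{a}}$, I would transplant to the infinite plane sector at $\boldsymbol{a}$ and solve the auxiliary Stokes problem $-\Delta\boldsymbol{w}+\nabla p=\boldsymbol{0}$, $\divop \boldsymbol{w}=q_{\boldsymbol{a}}$, $\boldsymbol{w}|_{\partial\Omega}=\boldsymbol{0}$; a Mellin transform in the radial variable reduces each angular Fourier mode to an ODE, and the finiteness of $\int|q|^2/|\boldsymbol{x}-\boldsymbol{a}|^2\,\md\boldsymbol{x}$ is precisely the spectral-orthogonality condition that kills the obstructing residues on the critical Mellin line, yielding $\nm{\boldsymbol{w}}{H^2}\le C\nm{q_{\boldsymbol{a}}}{H_+^1}$ after cutoff back to $\Omega$.

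The main obstacle is the corner analysis: one must verify that the Hardy-type weight $|\boldsymbol{x}-\boldsymbol{a}|^{-2}$ really pinpoints the singular modes of the Stokes system at each vertex --- at convex and reentrant corners alike --- and that the partition-of-unity commutators and the mean-value corrections do not reintroduce them. Once this is done, summing the local liftings and applying Bogovskii one last time to mop up the corrections produces a global $\boldsymbol{v}\in V$ with $\divop \boldsymbol{v}=q$ satisfying both stated bounds.
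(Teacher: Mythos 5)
The paper does not prove this lemma at all: it is quoted verbatim as \cite[Theorem 3.1]{Arnold:1988}, so there is no internal proof to compare against, and your task is really to reconstruct the Arnold--Scott--Vogelius argument. Your overall architecture (Bogovskii for the $L^2\to H^1$ bound, localization at the vertices, the weighted semi-norm as the corner compatibility condition) is the right skeleton, but the corner step as you describe it has a genuine gap. You propose to lift $q_{\boldsymbol{a}}$ by solving the Stokes system $-\Delta\boldsymbol{w}+\nabla p=\boldsymbol{0}$, $\divop\boldsymbol{w}=q_{\boldsymbol{a}}$ with homogeneous Dirichlet data in the sector, and you claim that finiteness of $\int\abs{q}^2/\abs{\boldsymbol{x}-\boldsymbol{a}}^2$ is \emph{precisely} the orthogonality condition killing the obstructing Mellin residues. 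That is not true in general. The Hardy condition on $q$ is the compatibility condition attached to the divergence operator itself (the pole of the pencil responsible for the failure of surjectivity of $\divop$ onto $H^1\cap L^2_0$), but the Stokes pencil in a sector has \emph{additional} eigenvalues; at a reentrant corner these have real part strictly between $1/2$ and $1$, so the Stokes velocity fails to be in $H^2$ near the vertex for generic admissible data, no matter how compatible $q_{\boldsymbol{a}}$ is. The lemma is stated for an arbitrary polygon, and Arnold--Scott--Vogelius prove it there precisely by \emph{not} taking the Stokes solution: they exploit the freedom to add divergence-free fields and build the preimage by a direct construction in polar coordinates near each vertex. Your route would at best work on a convex polygon (which happens to be the case used later in the paper), and even there the assertion that the weight pinpoints all critical residues needs an actual computation of the pencil, not an appeal to plausibility.

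A second, smaller gap: the lemma requires a \emph{single} $\boldsymbol{v}$ satisfying both $\nm{\boldsymbol{v}}{H^1}\le C\nm{q}{L^2}$ and $\nm{\boldsymbol{v}}{H^2}\le C\nm{q}{H^1_+}$ simultaneously (this double bound is what drives Lemma~\ref{lema:divPh} and the two regimes of the inf-sup proof). Your first paragraph produces one field $\boldsymbol{v}_1$ for the $H^1$ bound and your corner construction produces a different field for the $H^2$ bound; the final sentence simply asserts that the assembled lifting satisfies both. For that you must additionally show that each corner lifting and each commutator correction is bounded $L^2\to H^1$ with constant depending only on $\nm{q}{L^2}$, not on $\nm{q}{H^1_+}$. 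This is fixable (the Stokes lifting, where it applies, does satisfy an $H^1$ bound by $\nm{q_{\boldsymbol{a}}}{L^2}$), but it has to be stated and checked for whatever corner construction you ultimately use.
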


We introduce $p{:}=\lam\divop\boldsymbol{u}$ and write~\eqref{eq:sgbvp} into a mixed variational problem as
\begin{equation}\label{eq:mix}
\left\{
\begin{aligned}
a_{\iota}(\boldsymbol{u},\boldsymbol{v})+b_\iota(\boldsymbol{v},p)&=(\boldsymbol{f},\boldsymbol{v})\qquad&&\text{for all\quad}\boldsymbol{v}\in V,\\
b_{\iota}(\boldsymbol{u},q)-\lam^{-1}c_{\iota}(p,q)&=0\qquad&&\text{for all\quad}q\in P,
\end{aligned}\right.
\end{equation}
with
\begin{align*}
a_{\iota}(\boldsymbol{u},\boldsymbol{v}){:}&=2\mu\lr{(\boldsymbol{\epsilon}(\boldsymbol{u}),\boldsymbol{\epsilon}(\boldsymbol{v}))+\iota^2(\na\boldsymbol{\epsilon}(\boldsymbol{u}),\na\boldsymbol{\epsilon}(\boldsymbol{v}))},\\
 b_\iota(\boldsymbol{v},p){:}&=(\divop\boldsymbol{v},p)+\iota^2(\na\divop\boldsymbol{v},\na p),\quad\text{and\quad} c_{\iota}(p,q){:}=(p,q)+\iota^2(\na p,\na q).
\end{align*}

\begin{lemma}
The variational problem~\eqref{eq:var} and the mixed variational problem~\eqref{eq:mix} are equivalent for any fixed $\lambda$.
\end{lemma}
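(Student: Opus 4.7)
The plan is to exhibit a direct algebraic identity linking the bilinear form $a(\cdot,\cdot)$ of~\eqref{eq:var} with the triple $(a_\iota,b_\iota,c_\iota)$ of~\eqref{eq:mix}, so that a solution of one problem produces a solution of the other simply by setting $p=\lambda\operatorname{div}\boldsymbol u$. The crucial observation is the volumetric/deviatoric splitting of $\mathbb C$ and $\mathbb D$: using the given formulas for $\mathbb{C}_{ijkl}$ and $\mathbb{D}_{ijklmn}$ one checks by contraction that
\[
(\mathbb C\boldsymbol\epsilon(\boldsymbol u),\boldsymbol\epsilon(\boldsymbol v))=2\mu(\boldsymbol\epsilon(\boldsymbol u),\boldsymbol\epsilon(\boldsymbol v))+\lambda(\operatorname{div}\boldsymbol u,\operatorname{div}\boldsymbol v),
\]
\[
(\mathbb D\nabla\boldsymbol\epsilon(\boldsymbol u),\nabla\boldsymbol\epsilon(\boldsymbol v))=2\mu(\nabla\boldsymbol\epsilon(\boldsymbol u),\nabla\boldsymbol\epsilon(\boldsymbol v))+\lambda(\nabla\operatorname{div}\boldsymbol u,\nabla\operatorname{div}\boldsymbol v).
\]
Adding these (with the $\iota^2$ weight on the second) yields the key identity
\[
a(\boldsymbol u,\boldsymbol v)=a_\iota(\boldsymbol u,\boldsymbol v)+b_\iota(\boldsymbol v,\lambda\operatorname{div}\boldsymbol u)\qquad\text{for all }\boldsymbol u,\boldsymbol v\in V.
\]

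For the forward direction, suppose $\boldsymbol u\in V$ solves~\eqref{eq:var}. Since $\operatorname{div}V=P$ by Lemma~\ref{lema:divP}, the element $p{:=}\lambda\operatorname{div}\boldsymbol u$ lies in $P$. The above identity turns the single equation of~\eqref{eq:var} into the first equation of~\eqref{eq:mix}, while the definition $p=\lambda\operatorname{div}\boldsymbol u$ gives $\operatorname{div}\boldsymbol u=\lambda^{-1}p$ and therefore $b_\iota(\boldsymbol u,q)=\lambda^{-1}c_\iota(p,q)$ for every $q\in P$, which is the second equation.

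For the reverse direction, let $(\boldsymbol u,p)\in V\times P$ solve~\eqref{eq:mix}. Because $\operatorname{div}\boldsymbol u\in P$, the combination $q{:=}\lambda\operatorname{div}\boldsymbol u-p$ is an admissible test function in $P$, and the second equation of~\eqref{eq:mix} rewrites as $c_\iota(\lambda\operatorname{div}\boldsymbol u-p,q)=0$ for all $q\in P$. Testing with $q$ itself gives $\nm{\lambda\operatorname{div}\boldsymbol u-p}{\iota}^2=0$, hence $p=\lambda\operatorname{div}\boldsymbol u$; substituting back into the first equation of~\eqref{eq:mix} and using the key identity recovers~\eqref{eq:var}.

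The calculation is essentially routine; the only point that needs a little care is confirming that the algebraic identity for $(\mathbb D\nabla\boldsymbol\epsilon(\boldsymbol u),\nabla\boldsymbol\epsilon(\boldsymbol v))$ really collapses to $\lambda(\nabla\operatorname{div}\boldsymbol u,\nabla\operatorname{div}\boldsymbol v)+2\mu(\nabla\boldsymbol\epsilon(\boldsymbol u),\nabla\boldsymbol\epsilon(\boldsymbol v))$, which follows from the specific $\delta_{il}$ factor in $\mathbb D$ that ties the gradient direction to itself. The other small checkpoint is that $\nm{\cdot}{\iota}$ is a genuine norm on $P\subset L^2(\Omega)\cap H^1(\Omega)$, which is already explicit in the definition preceding Lemma~\ref{lema:korn} via the Poincaré inequality, so that $c_\iota$ is coercive on $P$ and the uniqueness step $p=\lambda\operatorname{div}\boldsymbol u$ goes through.
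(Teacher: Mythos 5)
Your proof is correct and follows essentially the same route as the paper, which simply tests the second equation of~\eqref{eq:mix} with $q=\divop\boldsymbol{u}-\lambda^{-1}p\in P$ (a scalar multiple of your choice) to force $p=\lambda\divop\boldsymbol{u}$; your volumetric/deviatoric expansion of $\mathbb{C}$ and $\mathbb{D}$ is exactly the identity the paper leaves implicit when it calls the claim obvious. The only cosmetic point is that coercivity of $c_\iota$ on $P$ gives $\nm{q}{L^2}^2+\iota^2\nm{\nabla q}{L^2}^2=0$ rather than literally $\nm{q}{\iota}^2=0$, but the two vanish simultaneously, so the conclusion $p=\lambda\divop\boldsymbol{u}$ stands.
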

This is obvious by choosing $q=\divop\boldsymbol{u}-\lambda^{-1}p\in P$. Therefore~\eqref{eq:mix} admits a unique solution $(\boldsymbol{u},p)=(\boldsymbol{u},\lambda\divop\boldsymbol{u})$.

If $\Omega$ is smooth, 
then we shall prove 
\[
\nm{\boldsymbol u-\boldsymbol{u}_0}{H^1}+\nm{p-p_0}{L^2}\le C\iota\nm{\boldsymbol f}{L^2},
\]
and
\[
\nm{\boldsymbol u}{H^3}+\nm{p}{H^2}\le C\iota^{-1}\nm{\boldsymbol f}{L^2}
\]
in Theorem~\ref{thm:reg1} and~\ref{thm:reg2}. Moreover, if $\boldsymbol{u}_0$ and $p_0$ are smoother, then we shall prove
\[
\nm{\nabla(\boldsymbol u-\boldsymbol{u}_0)}{\iota}+\nm{p-p_0}{\iota}\le C\iota^{3/2}(\nm{\boldsymbol{u}_0}{H^3}+\nm{p_0}{H^2}),
\]
and
\[
\nm{\boldsymbol u}{H^3}+\nm{p}{H^2}\le C\iota^{-1/2}(\nm{\boldsymbol{u}_0}{H^3}+\nm{p_0}{H^2})
\]
in the Appendix~\ref{appd:sge}.
\subsection{A nonconforming finite element}
We introduce a finite element to approximate Problem~\eqref{eq:mix}. Let $\T_h(\Omega)$ be a shape-regular, simplical triangulation of $\Om$ in the sense of Ciarlet and Raviart~\cite{Ciarlet:2002} and satisfying the inverse assumption. We denote by $\mc{E}_h$ the set of all edges in $\mc{T}_h$, $\mc{E}_h^i$ the set of edges in the interior of $\Omega$, and $\mc{E}_h^b$ the set of edges on $\pa\Om$. 

Define a finite element with a triple $(K,P_K,\Sigma_K)$ by specifying $K$ as a triangle and 
\[
P_K{:}=\mb{P}_2(K)+b_K\text{span}\{\mb{P}_1(K)+\lam_1\lam_2+\lam_2\lam_3+\lam_3\lam_1\},
\]
where $\{\lam_i\}_{i=1}^3$ are the barycentrical coordinates of $K$, and $b_K=\lam_1\lam_2\lam_3$ is the bubble function. Define the set of the degrees of freedom (DoFs) as
\begin{equation}\label{eq:dom}
\Sigma_K{:}=\set{v(\boldsymbol{a}_i),v(\boldsymbol{b}_i),\int_{e_i}\partial_{\boldsymbol{n}} v\md\sigma(\boldsymbol{x}),\int_K v\dx}{i=1,2,3},
\end{equation}
where $\{\boldsymbol{a}_i\}_{i=1}^3$ are three vertices of $K$, and $\{\boldsymbol{b}_i\}_{i=1}^3$ are three mid-points associated with the edges $e_i$. This element may be viewed as a modified version of~\cite{Tai:2001} (cf. Figure~\ref{fig:Diagram}).
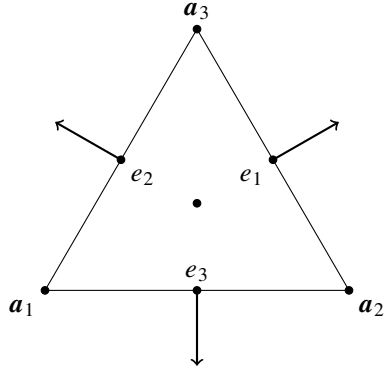
\begin{figure}[htbp]\centering\begin{tikzpicture}
\draw(-2,0)--(2,0)--(0,3.464)--(-2,0);
\draw[fill](-2,0)circle[radius=0.05];
\draw[fill](2,0)circle[radius=0.05];
\draw[fill](0,3.464)circle[radius=0.05];
\draw[fill](1,1.732)circle[radius=0.05];
\draw[fill](-1,1.732)circle[radius=0.05];
\draw[fill](0,0)circle[radius=0.05];
\draw[->][thick](1,1.732)--(1.866,2.232);
\draw[->][thick](-1,1.732)--(-1.866,2.232);
\draw[->][thick](0,0)--(0,-1);
\draw[fill](0,1.155)circle[radius=0.05];
\node[below left]at(-2,0){$\boldsymbol{a}_1$};
\node[below right]at(2,0){$\boldsymbol{a}_2$};
\node[above]at(0,3.47){$\boldsymbol{a}_3$};
\node[below left]at(1,1.732){$e_1$};
\node[below right]at(-1,1.732){$e_2$};
\node[above]at(0,0){$e_3$};
\end{tikzpicture}\caption{Diagram for the degrees of freedom.}\label{fig:Diagram}
\end{figure}
\begin{lemma}
The set $(K,P_K,\Sigma_K)$ is unisolvent.
\end{lemma}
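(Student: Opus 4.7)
The plan is to verify unisolvence by a dimension count followed by a standard vanishing argument. Since $\mathbb{P}_2(K)$ is six-dimensional and the enrichment space $\mathbb{P}_1(K)\oplus\operatorname{span}\{\lambda_1\lambda_2+\lambda_2\lambda_3+\lambda_3\lambda_1\}$ is four-dimensional, and since multiplication by the cubic bubble $b_K$ strictly raises degree and hence introduces no cancellation with $\mathbb{P}_2(K)$, one has $\dim P_K = 10 = |\Sigma_K|$. It therefore suffices to show that any $v\in P_K$ on which every functional in $\Sigma_K$ vanishes must itself vanish.

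Decompose $v = q + b_K r$ with $q\in\mathbb{P}_2(K)$ and $r = a_1\lambda_1+a_2\lambda_2+a_3\lambda_3 + c\,(\lambda_1\lambda_2+\lambda_2\lambda_3+\lambda_3\lambda_1)$. Because $b_K$ vanishes at every vertex and every edge midpoint of $K$, the six point-evaluation DoFs $v(\boldsymbol{a}_i)$ and $v(\boldsymbol{b}_i)$ restrict to the standard $\mathbb{P}_2$ Lagrange nodal conditions on $q$ alone, forcing $q\equiv 0$. It remains to show that the four remaining functionals force $r\equiv 0$.

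On edge $e_i$ we have $b_K|_{e_i}\equiv 0$ and $\nabla b_K|_{e_i} = \lambda_j\lambda_k\,\nabla\lambda_i$, so $\partial_{\boldsymbol{n}_i}b_K|_{e_i}$ is a nonzero constant multiple of $\lambda_j\lambda_k$. Hence each $\int_{e_i}\partial_{\boldsymbol{n}}(b_K r)\,d\sigma$ equals, up to a nonzero factor, $\int_{e_i}\lambda_j\lambda_k\,r\,d\sigma$. A short barycentric calculation reduces the three edge-flux conditions to equations of the form $(a_j+a_k)/12 + c/30 = 0$; their pairwise differences force $a_1=a_2=a_3=:a$, and the common equation then pins $c$ as a definite scalar multiple of $a$. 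Finally, the volume condition $\int_K b_K r\,d\boldsymbol{x}=0$, evaluated via the standard moment identity $\int_K \lambda_1^{\alpha_1}\lambda_2^{\alpha_2}\lambda_3^{\alpha_3}\,d\boldsymbol{x} = 2|K|\,\alpha_1!\alpha_2!\alpha_3!/(\alpha_1+\alpha_2+\alpha_3+2)!$, produces a scalar equation whose coefficient of $a$ is nonzero, yielding $a=0$ and hence $v\equiv 0$.

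The only substantive computation is the last one: after substituting the relation $c = c(a)$ obtained from the edge integrals, one must verify that the volume integral produces a genuinely nontrivial equation in $a$. This is the sole step that cannot be dispensed with by the permutation symmetry of $b_K$ and of $\Sigma_K$, and thus constitutes the main (mild) obstacle. I do not anticipate it failing, since $b_K$ and $1 - k(\lambda_1\lambda_2+\lambda_2\lambda_3+\lambda_3\lambda_1)$ are not $L^2$-orthogonal for generic $k$.
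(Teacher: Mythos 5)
Your proof is correct and follows essentially the same route as the paper's: a dimension count ($\dim P_K=10=|\Sigma_K|$) followed by a vanishing argument that first kills the $\mathbb{P}_2$ part through the six point values and then uses the three edge fluxes plus the volume integral to eliminate the $b_K$-enriched part. The only difference is cosmetic: where you solve the edge-flux and volume equations explicitly via moment formulas (and the coefficient you flag as the one substantive check is indeed nonzero, $\int_K b_K\bigl(1-5(\lambda_1\lambda_2+\lambda_2\lambda_3+\lambda_3\lambda_1)\bigr)\,\mathrm{d}\boldsymbol{x}=-|K|/140$), the paper pre-builds $\phi_0=140\,b_K\bigl(5(\lambda_1\lambda_2+\lambda_2\lambda_3+\lambda_3\lambda_1)-1\bigr)$ to decouple the volume degree of freedom and disposes of the remaining linear factor by a root-counting argument on the edges.
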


\begin{proof}
The dimension of $P_K$ is 10, which equals the number of DoFs~\eqref{eq:dom} exactly. Therefore, for any $v\in P_K$, we only need to show that if all DoFs vanish, then $v\equiv 0$. Define
\[
\phi_0{:}=140b_K\Lr{5(\lambda_1\lambda_2+\lambda_2\lambda_3+\lambda_3\lambda_1)-1}.
\]
It is obvious that $\phi_0$ vanishes on all DoFs except the last one, and $\int_K\phi_0(x)\dx=\abs{K}$. We express $v$ as
\(v=p_2+b_Kp_1+c\phi_0,\)
where $p_2\in\mb{P}_2(K)$, $p_1\in\mb{P}_1(K)$ and $c\in\mb{R}$. Since $v|_{e_i}=p_2|_{e_i}\in\mb{P}_2(e_i)$ with three roots, we must have $p_2|_{e_i}=0$. Therefore $p_2\equiv 0$. 

A direct calculation gives
\[
\int_{e_i}\partial_{\boldsymbol{n}}v\dsx=-\abs{\lambda_i}\int_{e_i}p_1\lambda_j\lambda_k\dsx=0.
\]
Therefore $p_1$ has a root in the interior of $e_i$. Hence $p_1$ has three roots over $\pa K$, and $p_1\equiv 0$. Finally $v$ vanishes on the last degrees of freedom shows that $c=0$. Thus $v\equiv 0$.
\end{proof}

We list the shape functions of the element. The one associated with $\negint_K v\dx$ is
\[
\phi_0=140b_K\Lr{5(\lambda_1\lambda_2+\lambda_2\lambda_3+\lambda_3\lambda_1)-1}.
\]
The shape functions associated with $\{\negint_{e_i}\partial_{\boldsymbol{n}}v\md\sigma(\boldsymbol{x})\}_{i=1,2,3}$ are
\[
\psi_i=\dfrac{6}{\abs{\nabla\lambda_i}}b_K(2\lambda_i-1)+\dfrac1{30\abs{\nabla\lambda_i}}\phi_0.
\]
The shape functions associated with $\{v(\boldsymbol{a}_i)\}_{i=1,2,3}$ are
\[
\phi_i=\lambda_i(2\lambda_i-1)-\abs{\na\lam_i}\psi_i+\sum_{j\neq i}\dfrac{\nabla\lambda_i\cdot\nabla\lambda_j}{\abs{\nabla\lambda_j}}\psi_j.
\]
The shape functions associated with $\{v(\boldsymbol{b}_i)\}_{i=1,2,3}$ are
\[
\varphi_i=4\lambda_j\lambda_k+12b_K(1-4\lambda_i)-\dfrac4{15}\phi_0.
\]

We define a local interpolation operator $\pi_K:H^2(K)\to P_K$ as:
\begin{equation}\label{eq:inter1}
\left\{\begin{aligned}
\pi_K v(\boldsymbol{a})&=v(\boldsymbol{a})\quad&&\text{for all vertices }\boldsymbol{a},\\
\pi_K v(\boldsymbol{b})&=v(\boldsymbol{b})\quad&&\text{for all mid-points }\boldsymbol{b}\text{ of each edges},\\
\int_e\partial_{\boldsymbol{n}}\pi_K v\md\sigma(\boldsymbol{x})&=\int_e\partial_{\boldsymbol{n}}v\md\sigma(\boldsymbol{x})\quad&&\text{for all edges }e,\\
\int_K\pi_Kv\dx&=\int_Kv\dx.
\end{aligned}\right.
\end{equation}
\begin{lemma}
For $v\in H^s(K)$ with $2\le s\le 3$, there exists $C$ independent of $h_K$ such that 
\begin{equation}\label{eq:localerr}
\nm{\na^j(v-\pi_Kv)}{L^2(K)}\le Ch_K^{s-j}\nm{v}{H^s(K)}\qquad j=0,1,2.
\end{equation}
\end{lemma}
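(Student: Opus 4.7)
The plan is the classical Ciarlet--Bramble--Hilbert argument: combine polynomial reproduction of $\pi_K$ with boundedness on a reference triangle, and then transfer the resulting estimate to $K$ by affine scaling.

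First, $\mb{P}_2(K)\subset P_K$ by the definition of $P_K$, and the preceding unisolvence lemma says every element of $P_K$ is determined by $\Sigma_K$; hence $\pi_K p=p$ for every $p\in\mb{P}_2(K)$. This quadratic preservation is exactly what licenses the exponent $s=3$ on the right-hand side of~\eqref{eq:localerr}.

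Second, on a fixed reference triangle $\hat K$ I would check that each DoF in~\eqref{eq:dom} extends to a continuous linear functional on $H^s(\hat K)$ for $s\ge 2$: vertex and midpoint values via the two-dimensional embedding $H^2(\hat K)\hookrightarrow C(\hat K)$; the edge integrals $\int_{\hat e}\partial_{\hat{\boldsymbol n}}\hat v\,\md\sigma(\boldsymbol{x})$ via the $H^1$-trace theorem applied to $\nabla\hat v$; the volume mean trivially. Finite-dimensionality of $P_{\hat K}$ then yields a bounded reference interpolant $\hat\pi:H^s(\hat K)\to P_{\hat K}$ that fixes $\mb{P}_2(\hat K)$, and the Bramble-Hilbert lemma supplies
\[
\nm{\hat v-\hat\pi\hat v}{H^j(\hat K)}\le C\,\abs{\hat v}_{H^s(\hat K)},\qquad 0\le j\le 2,\;2\le s\le 3,
\]
where non-integer $s$ is handled by interpolation between the integer endpoints.

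Third, I would pull back via the affine map $F_K:\hat K\to K$. Shape-regularity yields $\|DF_K\|\sim h_K$, $\|DF_K^{-1}\|\sim h_K^{-1}$, $\abs{\det DF_K}\sim h_K^2$; hence $\abs{w}_{H^k(K)}\sim h_K^{k-1}\abs{\hat w}_{H^k(\hat K)}$ for $\hat w=w\circ F_K$, and applying this with $w=v-\pi_K v$ converts the reference estimate into~\eqref{eq:localerr}. The main obstacle lies in this last step: the normal-derivative DoFs are \emph{not} intrinsic affine pullbacks, because the outward normal to $F_K(\hat e)$ is not the pushforward of the outward normal to $\hat e$. To identify $(\pi_K v)\circ F_K$ with $\hat\pi\hat v$ I would decompose the $K$-side normal into the reference normal and tangential directions with coefficients uniformly bounded under shape-regularity, so that the pulled-back DoFs form a basis of $(P_{\hat K})^*$ uniformly equivalent to $\hat\Sigma$; the resulting bounded change-of-basis is absorbed into $C$, and the scaling argument then proceeds as usual.
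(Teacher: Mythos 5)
Your proposal is correct and follows essentially the same route as the paper: the paper's proof consists of exactly your first step (showing $\pi_K$ reproduces $\mb{P}_2(K)$ via unisolvence) and then delegates the Bramble--Hilbert/scaling machinery to the cited reference of Ciarlet and Raviart. The extra care you take with the normal-derivative DoFs not being affine pullbacks is a genuine subtlety of such elements, and your resolution (uniformly bounded change of basis in $(P_{\hat K})^*$ under shape-regularity) is the standard and correct way to handle it.
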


\begin{proof}
For any $v\in\mb{P}_2(K)\subset P_K$, the definition~\eqref{eq:inter1} shows that $v-\pi_Kv\in P_K$ and the degrees of freedom vanish, hence $v=\pi_Kv$. The estimate~\eqref{eq:localerr} immediately follows from the $\mb{P}_2(K)$-invariance of the local interpolation operator $\pi_K$~\cite{CiarletRaviart:1972}.
\end{proof}

\subsection{The mixed finite element approximation}
We are ready to construct a stable finite element pair to approximate~\eqref{eq:mix}. Define
\[
X_h{:}=\set{v\in H^1(\Om)}{v|_K\in P_K\quad\forall K\in\T_h(\Omega), \int_e\jump{\partial_{\boldsymbol{n}}v}\dsx=0\quad\forall e\in\mc{E}_h^i},
\]
and the corresponding homogeneous space is defined as $X_h^0=X_h\cap H_0^1(\Omega)$. Let $V_h=[X_h^0]^2\subset[H_0^1(\Omega)]^2$. 
Motivated by $P$, we define
\begin{align*}
P_h{:}=\Bigl\{q\in L_0^2(\Omega)\cap H^1(\Omega)\ | \ &q|_K\in\mb{P}_1(K)\text{ for all } K\in\mc{T}_h(\Omega),\\
&q(\boldsymbol{a})=0\text{ for all }\boldsymbol{a}\in\mc{V}(\Omega)\Bigr\}.
\end{align*}

Theorem~\ref{thm:Hardy} indicates that $P_h\subset P$ for a fixed $h$. This observation motivates the following lemma, which is key to prove the discrete inf-sup condition.

\begin{lemma}\label{lema:divPh}
For any $q\in P_h$, there exists $\boldsymbol{v}\in V$ such that $\divop\boldsymbol{v}=q$ and
\[
\nm{\boldsymbol{v}}{H^1}\le C\nm{q}{L^2},\quad\nm{\boldsymbol{v}}{H^2}\le C\log(1/h)\nm{\nabla q}{L^2}.\]
\end{lemma}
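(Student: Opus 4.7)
The strategy is to invoke the continuous Arnold lemma (Lemma~\ref{lema:divP}) and then reduce the $H^2$-bound on the right inverse to an estimate on the weighted semi-norm $[q]_{H_+^1}$. This latter estimate is precisely where the broken Hardy inequality of \S\ref{sec:Hardy} enters, and it is what forces the logarithmic factor.

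First, I would verify that $q \in P = L_0^2(\Omega) \cap H_+^1(\Omega)$. Since $q|_K \in \mb{P}_1(K)$ globally continuous with zero mean, Poincar\'e yields $\nm{q}{H^1} \le C \nm{\nabla q}{L^2}$, so only $[q]_{H_+^1}$ needs justification. Granted this, Lemma~\ref{lema:divP} produces $\boldsymbol{v} \in V$ with $\divop \boldsymbol{v} = q$, $\nm{\boldsymbol{v}}{H^1} \le C \nm{q}{L^2} \le C\nm{\nabla q}{L^2}$, and $\nm{\boldsymbol{v}}{H^2} \le C \nm{q}{H_+^1}$; the first of these is already the desired $H^1$-bound.

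Next, the heart of the argument is the claim
\[
[q]_{H_+^1} \le C \log(1/h)\,\nm{\nabla q}{L^2}.
\]
I would prove this vertex by vertex. Fix $\boldsymbol{a} \in \mc{V}(\Omega)$ and translate so that $\boldsymbol{a}=\boldsymbol{0}$. Because $\Omega$ is a polygon, near $\boldsymbol{a}$ the domain agrees with a cone $\Omega_{\boldsymbol{a}}$ with vertex $\boldsymbol{0}$; choose $\delta>0$ small enough that $B_\delta(\boldsymbol{a}) \cap \mc{V}(\Omega) = \{\boldsymbol{a}\}$ and $B_\delta(\boldsymbol{a})\cap\Omega \subset \Omega_{\boldsymbol{a}}$. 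On this local piece, $q$ is a continuous piecewise linear function with $q(\boldsymbol{0})=0$, so all interior jumps vanish and Theorem~\ref{thm:Hardy} (applied with $d=2$, $r=1$, extended to a cone as noted in the remark following its proof; equivalently the Corollary, since the consistency condition holds trivially) yields
\[
\int_{B_\delta(\boldsymbol{a})\cap\Omega} \frac{|q(\boldsymbol{x})|^2}{|\boldsymbol{x}-\boldsymbol{a}|^2}\,\dx \le C \log^2(1/h)\, \nm{\nabla q}{L^2(\Omega)}^2.
\]
For the far part $\Omega\setminus B_\delta(\boldsymbol{a})$, the weight is uniformly bounded by $\delta^{-2}$, and the Poincar\'e inequality for the zero-mean $q$ dominates the contribution by $C\nm{\nabla q}{L^2}^2$. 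Summing over the (finitely many) vertices of $\Omega$ gives the claim.

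Combining yields $\nm{q}{H_+^1} \le C\log(1/h)\nm{\nabla q}{L^2}$, which when plugged into Arnold's $H^2$-bound delivers the second inequality of the lemma. The main obstacle is the transfer of Theorem~\ref{thm:Hardy}, which is formulated on $B_1^d$, to the cone at each polygon vertex (with appropriate scaling); this is precisely the extension flagged in the remark after Theorem~\ref{thm:Hardy}, and rests on the same element-by-element inverse estimate plus radial integration argument that drove the original proof. The indispensable input from $P_h$ is the vertex condition $q(\boldsymbol{a})=0$, without which neither the Hardy-type estimate nor the conclusion $q\in P$ would be available.
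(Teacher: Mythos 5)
Your proposal is correct and follows the same route as the paper: invoke Lemma~\ref{lema:divP} for the right inverse of the divergence, then bound the weighted semi-norm $[q]_{H_+^1}$ by $C\log(1/h)\nm{\nabla q}{L^2}$ via the broken Hardy inequality (Theorem~\ref{thm:Hardy}) together with Poincar\'e. The only difference is that you spell out the vertex-by-vertex localization, the near/far splitting, and the cone extension of Theorem~\ref{thm:Hardy}, all of which the paper leaves implicit in a one-line application of the Hardy inequality.
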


\begin{proof}
For any $q\in P_h$, it follows from  Lemma~\ref{lema:divP} that there exists $v\in V$ satisfying $\divop\boldsymbol{v}=q$, and there exists $C$ such that
\[
\nm{\boldsymbol{v}}{H^1}\le C\nm{q}{L^2},
\qquad\text{and\quad}\nm{\boldsymbol{v}}{H^2}\le C\nm{q}{H^1_+}.
\]
Using the broken Hardy inequality in Theorem~\ref{thm:Hardy}, 
we obtain
\[
\nm{q}{H^1_+}\le C\log(1/h)\nm{q}{H^1}
\le C\log(1/h)\nm{\nabla q}{L^2},
\]
where we have used the Poincare's inequality in the last step because $q$ has a vanishing mean over $\Om$. A combination of the above two inequalities completes the proof.
\end{proof}%

The mixed finite element approximation problem reads as: Find $(\boldsymbol{u}_h,p_h)\in V_h\x P_h$ such that
\begin{equation}\label{eq:mixap}
\left\{\begin{aligned}
a_{\iota,h}(\boldsymbol{u}_h,v)+b_{\iota,h}(\boldsymbol{v},p_h)&=(\boldsymbol{f},\boldsymbol{v})\qquad&&\text{for all\quad}\boldsymbol{v}\in V_h,\\
b_{\iota,h}(\boldsymbol{u}_h,q)-\lam^{-1}c_\iota(p_h,q)&=0\qquad&&\text{for all\quad}q\in P_h,
\end{aligned}\right.
\end{equation}
where for all $\boldsymbol{v},\boldsymbol{w}\in V_h$,
\[
a_{\iota,h}(\boldsymbol{v},\boldsymbol{w}){:}=2\mu\lr{(\boldsymbol{\epsilon}(\boldsymbol{v}),\boldsymbol{\epsilon}(\boldsymbol{w}))+\iota^2(\na_h\boldsymbol{\epsilon}(\boldsymbol{v}),\na_h\boldsymbol{\epsilon}(\boldsymbol{w}))},
\]
and for all $\boldsymbol{v}\in V_h$ and $q\in P_h$,
\[
b_{\iota,h}(\boldsymbol{v},q){:}=(\divop\boldsymbol{v}, q)+\iota^2(\na_h\divop\boldsymbol{v},\na q).
\]
For all $\boldsymbol{v}\in V_h$, we define a broken norm
\(
\nm{\na\boldsymbol v}{\iota,h}{:}=\nm{\na\boldsymbol{v}}{L^2}+\iota\nm{\na_h^2\boldsymbol{v}}{L^2}.
\)

The following broken Korn's inequality is proved in~\cite[Theorem 2]{LiMingWang:2021}: For all $\boldsymbol{v}\in V_h$, there holds
\[
  \nm{\na_h\boldsymbol{\epsilon}(\boldsymbol{v})}{L^2}\ge\Lr{1-1/\sqrt{2}}\nm{\na_h^2\boldsymbol{v}}{L^2},
\]
which together with the first Korn's inequality~\eqref{eq:1stkorn} gives
\begin{equation}\label{eq:disell}
a_{\iota,h}(\boldsymbol{v},\boldsymbol{v})\ge\dfrac\mu2\nm{\nabla\boldsymbol{v}}{\iota,h}^2\qquad\text{for all }\boldsymbol{v}\in V_h.
\end{equation}

It remains to prove the discrete inf-sup condition for the pair $(V_h,P_h)$. A natural way is to construct a uniformly stable quasi-Fortin operator. We achieve this by constructing different Fortin operators for the different regimes of $\iota/h$.

Firstly we define a natural interpolation operator $\varPi_h:V\to V_h$ by $\varPi_h|_K=\varPi_K=[\pi_K]^2$. It is straightforward to verify
\begin{lemma}
For all $\boldsymbol{v}\in V$, there holds
\begin{equation}\label{eq:inter2}
b_{\iota,h}(\varPi_h\boldsymbol{v},p)=b_\iota(\boldsymbol{v},p)\qquad\text{for all\;}p\in P_h.
\end{equation}
\end{lemma}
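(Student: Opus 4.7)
My plan is to split $b_{\iota,h}(\varPi_h\boldsymbol{v},p)=(\divop\varPi_h\boldsymbol{v},p)+\iota^2(\nabla_h\divop\varPi_h\boldsymbol{v},\nabla p)$ and match each piece with the corresponding part of $b_\iota(\boldsymbol{v},p)$ element-by-element. The three groups of DoFs of $\pi_K$ turn out to be tailor-made for this purpose: the volume mean $\int_K\pi_K v\dx=\int_K v\dx$ will handle the low-order piece, while the edge normal-derivative mean $\int_e\partial_{\boldsymbol{n}}\pi_K v\dsx=\int_e\partial_{\boldsymbol{n}}v\dsx$ together with the preserved vertex values will handle the high-order piece. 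A key preliminary observation is that the bubble component of $\varPi_h\boldsymbol{v}$ vanishes on each edge, so the trace $\varPi_h\boldsymbol{v}|_e$ is a $\mathbb{P}_2$-polynomial determined by the values of $\boldsymbol{v}$ at the two endpoints and the midpoint of $e$; consequently $\varPi_h\boldsymbol{v}$ is continuous across interior edges and vanishes on $\partial\Omega$.

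For the low-order term I would integrate by parts on each $K$. Since $\nabla p|_K$ is constant and $\int_K\varPi_h\boldsymbol{v}\dx=\int_K\boldsymbol{v}\dx$, the volume integral becomes $-\int_K\boldsymbol{v}\cdot\nabla p\dx$. Summing over $K$, the interior edge contributions cancel pairwise by continuity of $p$ and $\varPi_h\boldsymbol{v}$, and the boundary edge contribution is zero since $\varPi_h\boldsymbol{v}=\boldsymbol{0}$ on $\partial\Omega$. A second, global integration by parts using $\boldsymbol{v}|_{\partial\Omega}=\boldsymbol{0}$ then yields $(\divop\boldsymbol{v},p)$.

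For the high-order term, constancy of $\nabla p|_K$ and the divergence theorem reduce everything to the edgewise identity $\int_e\divop\varPi_h\boldsymbol{v}\dsx=\int_e\divop\boldsymbol{v}\dsx$ for every edge $e\subset\partial K$. Writing $\divop=\boldsymbol{n}\cdot\partial_{\boldsymbol{n}}+\boldsymbol{t}\cdot\partial_{\boldsymbol{t}}$ along $e$, the tangential piece integrates via the fundamental theorem of calculus to $\boldsymbol{t}\cdot(\boldsymbol{v}(\boldsymbol{a}_2)-\boldsymbol{v}(\boldsymbol{a}_1))$, which matches for $\varPi_h\boldsymbol{v}$ and $\boldsymbol{v}$ by vertex-value preservation; the normal piece matches exactly by the edge normal-derivative DoF.

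The main delicate point is the preliminary trace analysis: one must verify that the bubble and interior-integral shape functions in $P_K$ vanish on $\partial K$, so that the three nodal values fully determine $\varPi_h\boldsymbol{v}|_e$. Once that is secured, the continuity across interior edges and the vanishing on $\partial\Omega$ follow, and both halves of the identity become direct consequences of the deliberate matching between $\Sigma_K$ and the piecewise-linear structure of $P_h$.
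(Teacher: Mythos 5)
Your proposal is correct and follows essentially the same route as the paper: element-wise integration by parts with the volume DoF handling the $(\divop\cdot,p)$ term, and the splitting $\partial_j=t_j\partial_{\boldsymbol{t}}+n_j\partial_{\boldsymbol{n}}$ combined with the vertex and edge normal-derivative DoFs handling the $\iota^2(\nabla_h\divop\cdot,\nabla p)$ term. Your preliminary check that the bubble components vanish on $\partial K$, so that $\varPi_h\boldsymbol{v}\in[H_0^1(\Omega)]^2$, is a point the paper leaves implicit but is indeed needed.
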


\begin{proof}
Using the fact that $\varPi_h\boldsymbol{v}\in V_h\subset [H_0^1(\Omega)]^2$, an integration by parts gives
\begin{equation}\label{eq:dividen1}
\int_{\Omega}\divop(\boldsymbol{v}-\varPi_h\boldsymbol{v})p\dx
=-\sum_{K\in\mathcal{T}_h}\int_K(\boldsymbol{v}-\varPi_K\boldsymbol{v})\cdot\nabla p\dx=0,
\end{equation}
where we have used the identity~\eqref{eq:inter1}$_4$ in the last step.

Next, an integration by parts yields
\[
\int_{\Omega}\nabla\divop(\boldsymbol{v}-\varPi_h\boldsymbol{v})\cdot\nabla p\dx
=\sum_{K\in\mathcal{T}_h}\int_{\partial K}\divop(\boldsymbol{v}-\varPi_K\boldsymbol{v})\partial_{\boldsymbol{n}} p\dsx=0.
\]
The term vanishes because of $\partial_j=t_j\partial_{\boldsymbol{t}}+n_j\partial_{\boldsymbol{n}}$ for components $j=1,2$, and the relations~\eqref{eq:inter1}$_1$ and~\eqref{eq:inter1}$_3$.
\end{proof}

Next we introduce a regularized interpolation operator.
\begin{lemma}
There exists an operator $I_h:V\to V_h$ satisfying
\begin{equation}\label{eq:dividen2}
\int_{\Om}\divop(\boldsymbol{v}-I_h\boldsymbol{v})p\dx=0\qquad\text{for all\quad} p\in P_h.
\end{equation}

Moreover, if $\boldsymbol{v}\in V\cap[H^k(\Omega)]^2$ with an integer $k$ satisfying $1\le k\le 3$, then
\begin{equation}\label{eq:globalerr}
\nm{\na_h^j(\boldsymbol{v}-I_h\boldsymbol{v})}{L^2}\le Ch^{k-j}\nm{\na^k\boldsymbol{v}}{L^2}\qquad\text{for\quad}0\le j\le k.
\end{equation}
\end{lemma}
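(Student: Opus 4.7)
The plan is to write $I_h\boldsymbol{v}=J_h\boldsymbol{v}+\text{(bubble correction)}$, where $J_h\colon [H_0^1(\Omega)]^2\to V_h$ is a Scott--Zhang-type quasi-interpolant supplying the full range of approximation orders, and the bubble correction locally restores the volume-integral degree of freedom. The first step is to reduce~\eqref{eq:dividen2} to an elementwise matching: since $\boldsymbol{v}-I_h\boldsymbol{v}\in[H_0^1(\Omega)]^2$ and $p\in P_h$ is continuous on $\ov\Omega$ with piecewise constant gradient on $\mc{T}_h(\Omega)$, integration by parts gives
\[
\int_\Omega\divop(\boldsymbol{v}-I_h\boldsymbol{v})p\dx=-\sum_{K\in\mc{T}_h(\Omega)}\na p|_K\cdot\int_K(\boldsymbol{v}-I_h\boldsymbol{v})\dx,
\]
so it suffices to ensure $\int_K I_h\boldsymbol{v}\dx=\int_K\boldsymbol{v}\dx$ for every $K$.

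Next I would construct $J_h$ by adapting the Scott--Zhang framework to the current element. The vertex and midpoint point-values are replaced by $L^2$-duality averages of $\boldsymbol{v}$ against functions supported on an adjacent edge, and the edge normal-derivative DoF $\int_e\partial_{\boldsymbol{n}}v\dsx$ is realized through an analogous $L^2$ pairing against a suitable edge bubble supported near $e$. The dual bases are chosen so that $J_h$ is a projection preserving $\mb{P}_2$ locally, is bounded on $H^1$, and maps homogeneous boundary traces into $V_h$. A standard Bramble--Hilbert argument on the patch $\wt K$ surrounding $K$ then delivers
\[
\nm{\na_h^j(\boldsymbol{v}-J_h\boldsymbol{v})}{L^2(K)}\le Ch_K^{k-j}\nm{\na^k\boldsymbol{v}}{L^2(\wt K)}\qquad 1\le k\le 3,\ 0\le j\le k.
\]

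For the bubble correction, define $I_h\boldsymbol{v}{:}=J_h\boldsymbol{v}+\sum_{K}\boldsymbol{\alpha}_K\phi_0^K$ with $\boldsymbol{\alpha}_K{:}=|K|^{-1}\int_K(\boldsymbol{v}-J_h\boldsymbol{v})\dx\in\R^2$, where $\phi_0^K$ is the bubble shape function on $K$. Since $\phi_0^K$ vanishes at every local DoF of the element except the volume integral, $I_h\boldsymbol{v}\in V_h$ and $\int_K I_h\boldsymbol{v}\dx=\int_K\boldsymbol{v}\dx$ by construction. The Cauchy--Schwarz bound $|\boldsymbol{\alpha}_K|\le Ch_K^{-1}\nm{\boldsymbol{v}-J_h\boldsymbol{v}}{L^2(K)}$, together with the scaling $\nm{\na^j\phi_0^K}{L^2(K)}\le Ch_K^{1-j}$ for $j=0,1,2$, gives
\[
\nm{\na_h^j(I_h\boldsymbol{v}-J_h\boldsymbol{v})}{L^2(K)}\le Ch_K^{-j}\nm{\boldsymbol{v}-J_h\boldsymbol{v}}{L^2(K)}\le Ch_K^{k-j}\nm{\na^k\boldsymbol{v}}{L^2(\wt K)}.
\]
Triangle inequality and summation over $K$ then yield~\eqref{eq:globalerr}, while the elementwise volume matching together with the reduction above delivers~\eqref{eq:dividen2}.

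The main obstacle is the construction of $J_h$: the edge normal-derivative functional is not bounded on $[H^1]^2$ in its pointwise form, so one must replace it with an $L^2$-duality pairing that still preserves $\mb{P}_2$ locally and vanishes on functions with homogeneous boundary trace near $\pa\Omega$. Once such a $J_h$ is in hand, the bubble correction and the subsequent scaling arguments are routine.
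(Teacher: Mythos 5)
Your argument is correct in outline but takes a genuinely different route from the paper. You build a single quasi-interpolant $J_h$ directly into $V_h$ and then repair the volume DoF with a bubble; the paper instead factors its operator as $I_h=[\varPi_0\varPi_M]^2$, first interpolating into the \emph{conforming} second-order MINI space $M_h$ (a Scott--Zhang-type operator $\varPi_M$ that already preserves element averages), and then mapping $M_h$ into $X_h$ by an operator $\varPi_0$ that matches vertex and midpoint values, assigns the edge DoF the averaged normal derivative $\int_e\ave{\partial_{\boldsymbol n}w}\dsx$, and preserves element integrals; the second stage is controlled by a jump estimate of Guzm\'an--Leykekhman--Neilan. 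The payoff of the paper's factorization is precisely that the normal-derivative functional is only ever applied to piecewise polynomials $w\in M_h$, where it is well defined and trivially bounded, so the issue you flag as ``the main obstacle'' never arises. In your direct construction that obstacle is real but surmountable: since $q\mapsto\int_e\partial_{\boldsymbol n}q\dsx$ is a linear functional on the finite-dimensional space $P_K$, it admits a Riesz representer $\psi\in P_K$ with $\int_Kq\psi\dx=\int_e\partial_{\boldsymbol n}q\dsx$ for all $q\in P_K$ and $\nm{\psi}{L^2(K)}\le Ch_K^{-1}$, which yields an $L^2(K)$-bounded dual functional with the correct scaling, single-valued across each interior edge so that the image lies in $X_h$. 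Your reduction of~\eqref{eq:dividen2} to elementwise volume matching is identical to the paper's integration-by-parts step, and your scalings for the bubble correction ($\abs{\boldsymbol\alpha_K}\le Ch_K^{-1}\nm{\boldsymbol v-J_h\boldsymbol v}{L^2(K)}$ and $\nm{\nabla^j\phi_0^K}{L^2(K)}\le Ch_K^{1-j}$ in two dimensions) are right. The remaining work on your route is to actually exhibit the dual basis and verify the local $\mb P_2$-reproduction and the preservation of homogeneous boundary traces; until that is written out, your proof is a viable alternative plan rather than a complete argument, whereas the paper discharges the same difficulty by citing known properties of the MINI interpolant and of $\varPi_0$.
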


\begin{proof}
Let
\[
M_h{:}=\set{v\in H_0^1(\Omega)}{v|_K\in \mb{P}_2(K)+\text{span}\{b_K\}\text{ for all }K\in\T_h(\Om)}
\]
be the second-order MINI element~\cite{Arnold:1984}. There exists an interpolation operator $\varPi_M:H_0^1(\Omega)\cap H^k(\Omega)\to M_h$ satisfying
\begin{equation}\label{eq:SCerr}
\nm{\na_h^j(v-\varPi_Mv)}{L^2}\le Ch^{k-j}\nm{\na^kv}{L^2},\quad 0\le j\le k,1\le k\le 3,
\end{equation}
and the identity
\begin{equation}\label{eq:dividen3}
  \int_K\varPi_Mv\dx=\int_Kv\dx,\quad K\in\T_h.
\end{equation}
Define a global interpolation operator $\varPi_0:M_h\to X_h$ as
\begin{equation}\label{eq:inter3}
\left\{\begin{aligned}
\varPi_0 w(\boldsymbol{a})&=w(\boldsymbol{a})\quad&&\text{for all vertices }\boldsymbol{a},\\
\varPi_0 w(\boldsymbol{b})&=w(\boldsymbol{b})\quad&&\text{for all mid-points } \boldsymbol{b}\text{ of each edges},\\
\int_e\partial_{\boldsymbol{n}}\varPi_0 w\md\sigma(\boldsymbol{x})&=\int_e\ave{\partial_{\boldsymbol{n}}w}\md\sigma(\boldsymbol{x})\quad&&\text{for all edges }e\in\mc{E}_h,\\
\int_K\varPi_0w\dx&=\int_Kw\dx\quad&&\text{for all triangles }K\in\T_h(\Om).
\end{aligned}\right.
\end{equation}

Proceeding along the same line that leads to~\cite[Lemma 2]{GuzmanLeykekhmanNeilan:2012}, for any $w\in M_h$ and $K\in\T_h(\Om)$,
\[\nm{w-\varPi_0w}{L^2(K)}\le Ch_K^{3/2}\sum_{e\in\pa K\cap\mc{E}_h^i}\nm{\jump{\partial_{\boldsymbol{n}}w}}{L^2(e)}.
\]

Define $I_h{:}=[\varPi_0\varPi_M]^2:V\cap[H^k(\Omega)]^2\to V_h$. The estimate~\eqref{eq:globalerr} follows from the above inequality and the interpolation properties~\eqref{eq:SCerr} for $\varPi_M$. Similar to~\eqref{eq:dividen1}, using the identities~\eqref{eq:dividen3} and~\eqref{eq:inter3}$_4$, an integration by parts gives
\[
\int_{\Omega}\divop(\boldsymbol{v}-I_h\boldsymbol{v})p\dx=-\sum_{K\in\mathcal{T}_h}\int_K(\boldsymbol{v}-I_h\boldsymbol{v})\cdot\nabla p\dx=0.
\]
\end{proof}

We are ready to prove the discrete inf-sup condition, which is uniform in $\iota$.
\begin{lemma}\label{lema:infsup}
There exists $\beta$ independent of $\iota$ and $h$ such that 
\begin{equation}\label{eq:disbb}
\sup_{\boldsymbol{v}\in V_h}\dfrac{b_{\iota,h}(\boldsymbol{v},p)}{\nm{\nabla\boldsymbol{v}}{\iota,h}}\ge\dfrac{\beta}{\log^{3/2}(1/h)}\nm{p}{\iota}\qquad\text{for all\quad}p\in P_h.
\end{equation}
\end{lemma}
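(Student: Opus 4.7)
The plan is a quasi-Fortin argument that transfers a continuous inf-sup condition for the pair $(V,P_h)$ to the discrete pair $(V_h,P_h)$, with the Fortin operator chosen according to the ratio $\iota/h$.

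First, I would establish the continuous inf-sup condition on $(V, P_h)$. Since $P_h\subset P$ (a consequence of Theorem~\ref{thm:Hardy}), given $p\in P_h$ Lemma~\ref{lema:divPh} produces $\boldsymbol{v}_0\in V$ with $\divop\boldsymbol{v}_0=p$, $\nm{\boldsymbol{v}_0}{H^1}\le C\nm{p}{L^2}$, and $\nm{\boldsymbol{v}_0}{H^2}\le C\log(1/h)\nm{\nabla p}{L^2}$. Substituting yields $b_\iota(\boldsymbol{v}_0,p)=\nm{p}{L^2}^2+\iota^2\nm{\nabla p}{L^2}^2\ge\frac{1}{2}\nm{p}{\iota}^2$, while $\nm{\nabla\boldsymbol{v}_0}{\iota}\le C\log(1/h)\nm{p}{\iota}$, so the continuous inf-sup constant is of order $1/\log(1/h)$.

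Next, I would transfer this to $V_h$ via a regime-dependent quasi-Fortin operator. The two interpolants already constructed play complementary roles: $\varPi_h$ exactly preserves the mixed form by~\eqref{eq:inter2}, but its stability derived from~\eqref{eq:localerr} and the triangle inequality is only $\nm{\nabla\varPi_h\boldsymbol{v}_0}{\iota,h}\le C(\nm{\nabla\boldsymbol{v}_0}{L^2}+(h+\iota)\nm{\nabla^2\boldsymbol{v}_0}{L^2})$, which is lossy when $h\gg\iota$; whereas $I_h$ preserves only the $L^2$-part of the divergence pairing by~\eqref{eq:dividen2} but is uniformly bounded in the broken $H^1$- and $H^2$-norms via~\eqref{eq:globalerr}. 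In the regime $\iota\ge h$, take $\boldsymbol{v}_h=\varPi_h\boldsymbol{v}_0$: the Fortin identity gives $b_{\iota,h}(\boldsymbol{v}_h,p)\ge\frac{1}{2}\nm{p}{\iota}^2$, and $h\le\iota$ lets the $h$-term in the stability bound be absorbed into $\iota\nm{\nabla^2\boldsymbol{v}_0}{L^2}$, yielding a $1/\log(1/h)$ bound.

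In the complementary regime $\iota\le h$, the inverse inequality applied to $p\in P_h$ (piecewise linear, vanishing at every vertex of $\Omega$, globally $H^1$) gives $\iota\nm{\nabla p}{L^2}\le C\nm{p}{L^2}$, so that $\nm{p}{\iota}$ and $\nm{p}{L^2}$ are comparable. I would take $\boldsymbol{v}_h=I_h\boldsymbol{v}_0$: the identity~\eqref{eq:dividen2} contributes exactly $\nm{p}{L^2}^2$ to $b_{\iota,h}(\boldsymbol{v}_h,p)$, and the remainder $\iota^2(\nabla_h\divop I_h\boldsymbol{v}_0,\nabla p)$ is controlled by Cauchy--Schwarz against the stability bound~\eqref{eq:globalerr} for $I_h$ and the $\log(1/h)$-factor from Lemma~\ref{lema:divPh}, then absorbed into the leading $\nm{p}{L^2}^2$ by another use of the inverse inequality on $p$. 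The denominator $\nm{\nabla I_h\boldsymbol{v}_0}{\iota,h}$ is bounded by $C\log(1/h)\nm{p}{L^2}$ in the same way.

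I expect the main obstacle to be this absorption step in the regime $\iota\le h$: the naive bound on the $\iota^2$ cross-term carries a full $\log(1/h)$ factor inherited from the $H^2$-estimate in Lemma~\ref{lema:divPh}, which on its own would threaten the sign of the lower bound. Balancing that contribution through a Young-type inequality, which trades $\iota\nm{\nabla p}{L^2}$ against $\nm{p}{L^2}$ with an effective $\sqrt{\log(1/h)}$ weight after the inverse inequality is invoked, is what I expect produces an extra $\log^{1/2}(1/h)$ factor; composed with the $\log(1/h)$ loss of the continuous inf-sup, this yields the $\log^{3/2}(1/h)$ bound announced in~\eqref{eq:disbb}.
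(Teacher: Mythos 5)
Your architecture is the paper's own: the continuous inf-sup for $(V,P_h)$ via Lemma~\ref{lema:divPh}, the two interpolants $\varPi_h$ (exact preservation of $b_{\iota,h}$ by~\eqref{eq:inter2}, but lossy stability) and $I_h$ (preservation of only the $L^2$ part of the pairing by~\eqref{eq:dividen2}, but uniform stability by~\eqref{eq:globalerr}), and a split according to $\iota/h$. The gap is in where you place the split. With your threshold $\iota=h$, the $I_h$ branch does not close on the whole range $\iota\le h$: the remainder satisfies only
\[
\iota^2\abs{(\na_h\divop(\boldsymbol{v}_0-I_h\boldsymbol{v}_0),\na p)}\le C\iota^2\log(1/h)\nm{\na p}{L^2}^2\le C\,(\iota/h)^2\log(1/h)\,\nm{p}{L^2}^2,
\]
which for $\iota$ comparable to $h$ is of size $\log(1/h)\nm{p}{L^2}^2$, while the leading term $b_\iota(\boldsymbol{v}_0,p)=\nm{p}{\iota}^2$ is itself only of size $\nm{p}{L^2}^2$ in that regime. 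A remainder exceeding the main term by a factor $\log(1/h)$ cannot be absorbed by any Young-type rebalancing; the lower bound genuinely loses its sign for $h/\log^{1/2}(1/h)\lesssim\iota\le h$, and the ``effective $\sqrt{\log(1/h)}$ weight'' you invoke is not an available mechanism.

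The repair, which is what the paper does, is to move the threshold to $(\iota/h)\log^{1/2}(1/h)\le\gamma$. Then the $I_h$ remainder is at most $\gamma^2C_{\ast}\nm{p}{\iota}^2$ and is absorbed with \emph{no} extra logarithm, so that regime costs only the $\log(1/h)$ of Lemma~\ref{lema:divPh}; in the complementary regime one has $h\le\iota\log^{1/2}(1/h)/\gamma$, whence $\nm{\na\varPi_h\boldsymbol{v}_0}{\iota,h}\le C\lr{2+\log^{1/2}(1/h)/\gamma}\nm{\na\boldsymbol{v}_0}{\iota}$ and the $\varPi_h$ branch degrades to $\log^{3/2}(1/h)$. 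In other words, the extra $\log^{1/2}(1/h)$ in~\eqref{eq:disbb} originates in the $\varPi_h$ regime, not, as you assert, in the $I_h$ regime; your observation that $\varPi_h$ on $\iota\ge h$ already yields a $1/\log(1/h)$ bound is correct but covers too little of the parameter range to meet the $I_h$ branch where it actually works.
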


\begin{proof}
Using Lemma~\ref{lema:divPh}, for any $p\in P_h$, there exists $\boldsymbol{v}_0\in V$ such that $\divop\boldsymbol{v}_0=p$. Hence 
\[
b_\iota(\boldsymbol{v}_0,p)=\nm{p}{\iota}^2\qquad\text{and}\qquad\nm{\nabla\boldsymbol{v}_0}{\iota}\le C\log(1/h)\nm{p}{\iota}.
\]

We firstly consider the case $(\iota/h)\log^{1/2}(1/h)\le \gamma$ with $\gamma$ to be determined later on. Using~\eqref{eq:globalerr} with $j=k=2$, we obtain 
\[
\nm{\na_h\divop(\boldsymbol{v}_0-I_h\boldsymbol{v}_0)}{L^2}\le C\nm{\na^2\boldsymbol{v}_0}{L^2}\le C\log(1/h)\nm{\na p}{L^2},
\]
where we have used Lemma~\ref{lema:divPh} in the last step. This inequality together with the inverse inequality leads to
\begin{align*}
\iota^2\abs{(\na_h\divop(\boldsymbol{v}_0-I_h\boldsymbol{v}_0),\na p)}&\le C\iota^2\log(1/h)\nm{\na p}{L^2}^2\le C_{\ast}(\iota/h)^2\log(1/h)\nm{p}{L^2}^2\\
&\le \gamma^2C_{\ast}\nm{p}{\iota}^2.
\end{align*}
Fixing $\gamma$ such that $\gamma^2C_{\ast}<1$, we obtain
\[
b_{\iota,h}(I_h\boldsymbol{v}_0,p)=b_{\iota}(\boldsymbol{v}_0,p)-\iota^2(\na_h\divop(\boldsymbol{v}_0-I_h\boldsymbol{v}_0),\na p)\ge (1-\gamma^2C_{\ast})\nm{p}{\iota}^2.
\]
By~\eqref{eq:globalerr} and Lemma~\ref{lema:divPh}, we obtain
\[
\nm{\na I_h\boldsymbol{v}_0}{\iota,h}\le\nm{\na \boldsymbol{v}_0}{\iota}+\nm{\na(\boldsymbol{v}_0-I_h\boldsymbol{v}_0)}{\iota,h}\le C\nm{\na \boldsymbol{v}_0}{\iota}\le C\log(1/h)\nm{p}{\iota},
\]
A combination of the above two inequalities leads to
\begin{equation}\label{eq:disbb1}
\sup_{\boldsymbol{v}\in V_h}\dfrac{b_{\iota,h}(\boldsymbol{v},p)}{\nm{\nabla\boldsymbol{v}}{\iota,h}}\ge\dfrac{b_{\iota,h}(I_h\boldsymbol{v}_0,p)}{\nm{\na I_h\boldsymbol{v}_0}{\iota,h}}\ge\dfrac{1-\gamma^2C_*}{C\log(1/h)}\nm{p}{\iota}.
\end{equation}

Next, if $(\iota/h)\log^{1/2}(1/h)>\gamma$, then we use~\eqref{eq:localerr} and obtain
\[
\nm{\na(\boldsymbol{v}-\varPi_h\boldsymbol{v})}{L^2}\le Ch\nm{\nabla^2\boldsymbol{v}}{L^2}\quad\text{and}\quad
\nm{\na^2_h(\boldsymbol{v}-\varPi_h\boldsymbol{v})}{L^2}\le C\nm{\nabla^2\boldsymbol{v}}{L^2}.
\]
Therefore,
\begin{align*}
\nm{\na\varPi_hv}{\iota,h}&\le\nm{\nabla\boldsymbol{v}}{\iota}+\nm{\na(\boldsymbol{v}-\varPi_h\boldsymbol{v})}{\iota,h}\le\nm{\nabla\boldsymbol{v}}{\iota}+C(h+\iota)\nm{\nabla^2\boldsymbol{v}}{L^2}\\
&\le C (2+\log^{1/2}(1/h)/\gamma)\nm{\nabla\boldsymbol{v}}{\iota},
\end{align*}
which together with~\eqref{eq:inter2} gives 
\begin{equation}\label{eq:disbb2}
\begin{aligned}
\sup_{\boldsymbol{v}\in V_h}\dfrac{b_{\iota,h}(\boldsymbol{v},p)}{\nm{\nabla\boldsymbol{v}}{\iota}}&\ge\dfrac{b_{\iota,h}(\varPi_h\boldsymbol{v}_0,p)}{\nm{\na\varPi_h\boldsymbol{v}_0}{\iota,h}}
=\dfrac{b_{\iota,h}(\boldsymbol{v}_0,p)}{\nm{\na\varPi_h\boldsymbol{v}_0}{\iota,h}}=\dfrac{\nm{p}{\iota}^2}{\nm{\na\varPi_h\boldsymbol{v}_0}{\iota,h}}\\
&\ge\dfrac{\gamma}{C\log(1/h)(\log^{1/2}(1/h)+2\gamma)}\nm{p}{\iota}.
\end{aligned}
\end{equation}

A combination of~\eqref{eq:disbb1} and~\eqref{eq:disbb2} shows that~\eqref{eq:disbb} holds true with $\beta$ independent of $\iota$ and $h$.
\end{proof}

\subsection{Error estimates}
For any $\boldsymbol{v},\boldsymbol{w}\in V_h, q,z\in P_h$, we define the bilinear form 
\[
\mc{A}_{\iota,h}(\boldsymbol{v},q;\boldsymbol{w},z){:}=a_{\iota,h}(\boldsymbol{v},\boldsymbol{w})+b_{\iota,h}(\boldsymbol{w},q)+b_{\iota,h}(\boldsymbol{v},z)-\lam^{-1}c_{\iota}(q,z),
\]
and the norm $\wnm{(\boldsymbol{w},z)}{:}=\nm{\na\boldsymbol{w}}{\iota,h}+\nm{q}{\iota}+\lam^{-1/2}\nm{q}{\iota}$. 

\begin{lemma}
    There exists $\al$ depending on $\mu$ and $\beta$ such that
\begin{equation}\label{eq:infsup}
\inf_{(\boldsymbol{v},q)\in V_h\x P_h}\sup_{(\boldsymbol{w},z)\in V_h\x P_h}\dfrac{\mc{A}_{\iota,h}(\boldsymbol{v},q;\boldsymbol{w},z)}{\wnm{(\boldsymbol{w},z)}\wnm{(\boldsymbol{v},q)}}\ge\dfrac{\al}{\log^3(1/h)}.
\end{equation}
\end{lemma}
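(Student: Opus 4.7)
The plan is to prove the combined inf-sup by the standard saddle-point-with-penalty technique, testing with a cleverly chosen pair $(\boldsymbol{w},z)$ built from $(\boldsymbol{v},q)$ and the discrete inf-sup function supplied by Lemma~\ref{lema:infsup}. Given $(\boldsymbol{v},q)\in V_h\times P_h$, I would first invoke Lemma~\ref{lema:infsup} to obtain $\boldsymbol{v}_q\in V_h$ normalized so that $\nm{\nabla\boldsymbol{v}_q}{\iota,h}=\nm{q}{\iota}$ and
\[
b_{\iota,h}(\boldsymbol{v}_q,q)\ge\dfrac{\beta}{\log^{3/2}(1/h)}\nm{q}{\iota}^2.
\]
Then set $\boldsymbol{w}=\boldsymbol{v}+\delta\boldsymbol{v}_q$ and $z=-q$ for a small parameter $\delta>0$ to be fixed.

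Substituting into the definition of $\mathcal{A}_{\iota,h}$ and cancelling the $b_{\iota,h}(\boldsymbol{v},q)$ terms, I get
\[
\mathcal{A}_{\iota,h}(\boldsymbol{v},q;\boldsymbol{w},z)=a_{\iota,h}(\boldsymbol{v},\boldsymbol{v})+\delta\,a_{\iota,h}(\boldsymbol{v},\boldsymbol{v}_q)+\delta\,b_{\iota,h}(\boldsymbol{v}_q,q)+\lambda^{-1}c_\iota(q,q).
\]
Using the discrete coercivity~\eqref{eq:disell} on the first term, the inf-sup bound on the third, and the obvious boundedness $\abs{a_{\iota,h}(\boldsymbol{v},\boldsymbol{v}_q)}\le C_{\text{b}}\nm{\nabla\boldsymbol{v}}{\iota,h}\nm{q}{\iota}$ together with Young's inequality on the cross term, I would absorb $\tfrac{\mu}{4}\nm{\nabla\boldsymbol{v}}{\iota,h}^2$ back and be left with a coefficient $\bigl(\delta\beta/\log^{3/2}(1/h)-C\delta^2\bigr)\nm{q}{\iota}^2$ in front of the pressure term. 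Picking $\delta\sim\mu\beta/(C_{\text{b}}^2\log^{3/2}(1/h))$ makes this coefficient equal to $\sim\mu\beta^2/\log^3(1/h)$, which gives
\[
\mathcal{A}_{\iota,h}(\boldsymbol{v},q;\boldsymbol{w},z)\ge\dfrac{C}{\log^3(1/h)}\Lr{\nm{\nabla\boldsymbol{v}}{\iota,h}^2+\nm{q}{\iota}^2}+\lambda^{-1}\nm{q}{\iota}^2\gtrsim\dfrac{1}{\log^3(1/h)}\wnm{(\boldsymbol{v},q)}^2.
\]

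To complete the ratio in~\eqref{eq:infsup}, I need an upper bound on $\wnm{(\boldsymbol{w},z)}$. Since $\delta\le 1$ for small $h$, the triangle inequality gives $\nm{\nabla\boldsymbol{w}}{\iota,h}\le\nm{\nabla\boldsymbol{v}}{\iota,h}+\delta\nm{q}{\iota}$ and $\nm{z}{\iota}=\nm{q}{\iota}$, so $\wnm{(\boldsymbol{w},z)}\le C\wnm{(\boldsymbol{v},q)}$ with $C$ independent of $h$ and $\iota$. Dividing and taking the supremum yields~\eqref{eq:infsup} with $\alpha=\alpha(\mu,\beta)$, and the ratio carries exactly the $1/\log^3(1/h)$ factor.

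The main obstacle is tracking where the $\log^3$ comes from: the loss is exactly the square of the $\log^{3/2}$ in Lemma~\ref{lema:infsup}, produced by the Young-inequality step that forces $\delta$ to scale like $1/\log^{3/2}(1/h)$ and then multiplies the pressure coercivity by another $\delta$. A minor subtlety to handle carefully is that the $\lambda^{-1}c_\iota(q,q)$ contribution gives precisely the $\lambda^{-1/2}$ piece of the triple norm without any logarithmic loss, so I keep it separate from the $\log^{-3}\nm{q}{\iota}^2$ term when assembling the final lower bound; this is what lets $\alpha$ be taken independent of $\lambda$ and $\iota$.
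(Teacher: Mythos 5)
Your proposal is correct and follows essentially the same route as the paper: the same test pair $(\boldsymbol{w},z)=(\boldsymbol{v}+\delta\boldsymbol{v}_1,-q)$ built from the discrete inf-sup element of Lemma~\ref{lema:infsup}, the same use of coercivity~\eqref{eq:disell} and Young's inequality on the cross term, and the same mechanism by which the $\log^{3/2}(1/h)$ of the inf-sup constant is squared into $\log^3(1/h)$. The only cosmetic difference is your normalization $\nm{\nabla\boldsymbol{v}_q}{\iota,h}=\nm{q}{\iota}$ (so $\delta\sim\log^{-3/2}(1/h)$) versus the paper's $b_{\iota,h}(\boldsymbol{v}_1,q)=\nm{q}{\iota}^2$ (so $\delta\sim\log^{-3}(1/h)$), which is an equivalent reparametrization.
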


\begin{proof}
By the discrete inf-sup condition~\eqref{eq:disbb}, for any $q\in P_h$, there exists $\boldsymbol{v}_1\in V_h$ such that
\[
b_{\iota,h}(\boldsymbol{v}_1,q)=\nm{q}{\iota}^2,\qquad\nm{\nabla\boldsymbol{v}_1}{\iota,h}\le\dfrac{\log^{3/2}(1/h)}{\beta}\nm{q}{\iota}.
\]

For any $\del>0$ to be determined later on, and for any $\boldsymbol{v}\in V_h$ and $q\in P_h$, we take $\boldsymbol{w}=\boldsymbol{v}+\del \boldsymbol{v}_1$ and $z=-q$, then
\begin{align*}
\mc{A}_{\iota,h}(\boldsymbol{v},q;\boldsymbol{w},z)&=\mc{A}_{\iota,h}(\boldsymbol{v},q;\boldsymbol{v},-q)+\del\mc{A}_{\iota,h}(\boldsymbol{v},q;\boldsymbol{v}_1,0)\\
&\ge\dfrac{\mu}{2}\nm{\nabla\boldsymbol{v}}{\iota,h}^2+\lam^{-1}\nm{q}{\iota}^2+\del\nm{q}{\iota}^2-\dfrac{2\mu\del}{\beta}\log^{3/2}(1/h)\nm{\nabla\boldsymbol{v}}{\iota,h}\nm{q}{\iota}\\
&\ge\Lr{\dfrac{\mu}{2}-\dfrac{2\mu^2\del}{\beta^2}\log^3(1/h)}\nm{\nabla\boldsymbol{v}}{\iota,h}^2+\dfrac\del{2}\nm{q}{\iota}^2+\lam^{-1}\nm{q}{\iota}^2.
\end{align*}
Choosing $\del=\mu\beta^2/(4\mu^2\log^3(1/h)+\beta^2)$, and a direct calculation gives
\[
\wnm{(\boldsymbol{w},z)}\le \Lr{1+\dfrac{\mu\beta\log^{3/2}(1/h)}{4\mu^2\log^3(1/h)+\beta^2}}\wnm{(\boldsymbol{v},q)}.
\]
A combination of the above two inequalities leads to~\eqref{eq:infsup}.
\end{proof}

The error estimate for the smooth solution may be found in

\begin{theorem}\label{thm:smooth}
If $\boldsymbol{u}\in [H^3(\Om)]^2$ and $p\in H^2(\Om)$, then
\begin{equation}\label{eq:finalerr1}
\wnm{(\boldsymbol{u}-\boldsymbol{u}_h,p-p_h)}\le C\log^3(1/h)(h^2+\iota h)\Lr{\nm{\boldsymbol{u}}{H^3}+\nm{p}{H^2}}.
\end{equation}
\end{theorem}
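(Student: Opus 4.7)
The plan is to apply a Strang-type argument that couples the discrete inf-sup estimate~\eqref{eq:infsup} with interpolation and consistency error bounds. Pick interpolants $I_h\boldsymbol{u}\in V_h$ (from the regularized operator constructed above) and let $p_I\in P_h$ be the continuous piecewise linear nodal interpolant of $p$ (adjusted to have zero mean); the latter is well-defined in $P_h$ since the assumption $p\in H^2(\Om)\cap P$, together with the $H^1_+$ semi-norm control, forces $p$ to vanish at the vertices of $\Om$. By the triangle inequality,
\[
\wnm{(\boldsymbol{u}-\boldsymbol{u}_h,p-p_h)}\le\wnm{(\boldsymbol{u}-I_h\boldsymbol{u},p-p_I)}+\wnm{(I_h\boldsymbol{u}-\boldsymbol{u}_h,p_I-p_h)},
\]
and the first summand is directly bounded by $C(h^2+\iota h)(\nm{\boldsymbol{u}}{H^3}+\nm{p}{H^2})$ via~\eqref{eq:globalerr} for $\boldsymbol{u}$ with $k=3$ and the standard $L^2/H^1$ interpolation estimates for piecewise linear interpolation of $p$.

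For the discrete part, I would apply~\eqref{eq:infsup} to the pair $(I_h\boldsymbol{u}-\boldsymbol{u}_h,p_I-p_h)$, obtaining a test pair $(\boldsymbol{w},z)\in V_h\x P_h$ realizing
\[
\dfrac{\al}{\log^3(1/h)}\wnm{(I_h\boldsymbol{u}-\boldsymbol{u}_h,p_I-p_h)}\wnm{(\boldsymbol{w},z)}\le\mc{A}_{\iota,h}(I_h\boldsymbol{u}-\boldsymbol{u}_h,p_I-p_h;\boldsymbol{w},z).
\]
Subtracting the discrete problem~\eqref{eq:mixap}, the right-hand side splits as
\[
\mc{A}_{\iota,h}(I_h\boldsymbol{u}-\boldsymbol{u},p_I-p;\boldsymbol{w},z)+\Lr{\mc{A}_{\iota,h}(\boldsymbol{u},p;\boldsymbol{w},z)-(\boldsymbol{f},\boldsymbol{w})},
\]
where the first summand is handled by Cauchy--Schwarz applied to each of the four bilinear components of $\mc{A}_{\iota,h}$, together with the same interpolation bounds as above, producing the factor $C(h^2+\iota h)(\nm{\boldsymbol{u}}{H^3}+\nm{p}{H^2})\wnm{(\boldsymbol{w},z)}$. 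The second summand $E_h{:}=\mc{A}_{\iota,h}(\boldsymbol{u},p;\boldsymbol{w},z)-(\boldsymbol{f},\boldsymbol{w})$ is the nonconformity residual and is the main obstacle.

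The key work is estimating $E_h$. Since $V_h$ is $H^1$-conforming but not $H^2$-conforming, testing the strong form~\eqref{eq:sgbvp} against $\boldsymbol{w}$ element by element and integrating by parts in the $\iota^2(\na_h\boldsymbol\eps(\boldsymbol u),\na_h\boldsymbol\eps(\boldsymbol w))$ and $\iota^2(\na_h\divop\boldsymbol u,\na p)$ terms produces an interelement residual of the form
\[
E_h=\iota^2\sum_{e\in\mc{E}_h^i}\int_e(\text{smooth normal-flux data from }\boldsymbol u,p)\cdot\jump{\pa_{\boldsymbol n}\boldsymbol{w}}\dsx,
\]
since the jumps of $\boldsymbol{w}$ itself vanish by $H^1$-conformity. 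The weak continuity $\int_e\jump{\pa_{\boldsymbol n}\boldsymbol{w}}\dsx=0$ built into $V_h$ allows subtracting, on each edge, the edge-average of the smooth factor, and then the Bramble--Hilbert lemma plus a scaled trace inequality gives
\[
\abs{E_h}\le C\iota^2h\Lr{\nm{\boldsymbol u}{H^3}+\nm{p}{H^2}}\nm{\na_h^2\boldsymbol w}{L^2}\le C\iota h\Lr{\nm{\boldsymbol u}{H^3}+\nm{p}{H^2}}\wnm{(\boldsymbol w,z)},
\]
where in the last step I absorb one $\iota$ into $\iota\nm{\na_h^2\boldsymbol w}{L^2}\le\wnm{(\boldsymbol w,z)}$.

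Collecting the approximation-in-$\mc{A}_{\iota,h}$ and consistency estimates, dividing through by $\wnm{(\boldsymbol w,z)}$, and absorbing the $\log^3(1/h)$ prefactor from~\eqref{eq:infsup} into the discrete error yields
\[
\wnm{(I_h\boldsymbol u-\boldsymbol u_h,p_I-p_h)}\le C\log^3(1/h)(h^2+\iota h)\Lr{\nm{\boldsymbol u}{H^3}+\nm{p}{H^2}},
\]
which combined with the approximation bound for the first summand gives~\eqref{eq:finalerr1}. The subtle point is verifying that the weak normal-derivative continuity baked into the definition of $X_h$ is precisely what is needed to recover the $O(\iota h)$ rate in $E_h$ and to match the $O(h^2+\iota h)$ approximation rate of $I_h$; this is exactly the requirement the element was designed to meet.
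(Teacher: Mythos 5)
Your argument is correct and follows essentially the same route as the paper's proof: split off the interpolation error, apply the discrete inf-sup condition~\eqref{eq:infsup} to the discrete error pair, bound the approximation part of $\mc{A}_{\iota,h}$ by the interpolation estimates, and reduce the consistency residual to edge integrals of the form $\iota^2\sum_{e}\int_e\partial_{\boldsymbol{n}}\boldsymbol{\sigma n}\cdot\jump{\partial_{\boldsymbol{n}}\boldsymbol{w}}\dsx$, which the weak normal-derivative continuity of $X_h$ plus a scaled trace inequality bounds by $C\iota h(\nm{\boldsymbol{u}}{H^3}+\nm{p}{H^2})\nm{\nabla\boldsymbol{w}}{\iota,h}$. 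Your remark that $p\in H^2(\Om)\cap P$ must vanish at the vertices of $\Om$ (so that its nodal interpolant indeed lies in $P_h$) is a detail the paper leaves implicit, and is a welcome addition.
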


\begin{proof}
Let $\boldsymbol{v}=\boldsymbol{u}_h-\boldsymbol{u}_I$ and $q=p_h-p_I$ with $\boldsymbol{u}_I\in V_h$ the interpolation of $\boldsymbol{u}$ and $p_I\in P_h$ the first-order Lagrangian interpolation of $p$, respectively. 
\begin{align*}
\mc{A}_{\iota,h}(\boldsymbol{v},q;\boldsymbol{w},z)&=\mc{A}_{\iota,h}(\boldsymbol{u}_h,p_h;\boldsymbol{w},z)-\mc{A}_{\iota,h}(\boldsymbol{u},p;\boldsymbol{w},z)+\mc{A}_{\iota,h}(\boldsymbol{u}-\boldsymbol{u}_I,p-p_I;\boldsymbol{w},z)\\
&=(\boldsymbol{f},\boldsymbol{w})-\mc{A}(\boldsymbol{u},p;\boldsymbol{w},z)+\mc{A}_{\iota,h}(\boldsymbol{u}-\boldsymbol{u}_I,p-p_I;\boldsymbol{w},z)\\
&=\mc{A}_{\iota,h}(\boldsymbol{u}-\boldsymbol{u}_I,p-p_I;\boldsymbol{w},z)-\iota^2\sum_{e\in\mc{E}_h^i}\int_e\partial_{\boldsymbol{n}}\boldsymbol{\sigma n}\cdot\jump{\partial_{\boldsymbol{n}}\boldsymbol{w}}\dsx.
\end{align*}

The boundedness of $\mc{A}_{\iota,h}$ yields
\[
\abs{\mc{A}_{\iota,h}(\boldsymbol{u}-\boldsymbol{u}_I,p-p_I;\boldsymbol{w},z)}
\le C\wnm{(\boldsymbol{u}-\boldsymbol{u}_I,p-p_I)}\wnm{(\boldsymbol{w},z)},
\]
and the standard interpolation error estimates in~\eqref{eq:localerr} gives
\[
\wnm{(\boldsymbol{u}-\boldsymbol{u}_I,p-p_I)}\le C(h^2+\iota h)(\nm{\boldsymbol{u}}{H^3}+\nm{p}{H^2}).
\]
Using the trace inequality, we obtain
\begin{align*}
\iota^2\labs{\sum_{e\in\mc{E}_h^i}\int_e\partial_{\boldsymbol{n}}\boldsymbol{\sigma n}\cdot\jump{\partial_{\boldsymbol{n}}\boldsymbol{w}}\dsx}&\le C\iota^2h(\nm{\boldsymbol{u}}{H^3}+\nm{p}{H^2})\nm{\na_h^2 w}{L^2}\\
&\le C\iota h(\nm{\boldsymbol{u}}{H^3}+\nm{p}{H^2})\nm{\na w}{\iota,h}.
\end{align*}
A combination of the above three inequalities, the discrete inf-sup condition~\eqref{eq:infsup} and the triangle inequalities gives~\eqref{eq:finalerr1}. \end{proof}

Next we give the error estimate of $\wnm{(\boldsymbol{u}_0-\boldsymbol{u}_h,p_0-p_h)}$ under the regularity assumption that $f\in L^2(\Om)$, in order to explain the results for the solution with boundary layers.

\begin{theorem}\label{thm:layer}
Let $\boldsymbol{u}_0$ be the solution of~\eqref{eq:elas} and $p_0=\lam\divop\boldsymbol{u}_0$. There holds
\begin{equation}\label{eq:finalerr2}
\wnm{(\boldsymbol{u}_0-\boldsymbol{u}_h,p_0-p_h)}\le C\log^3(1/h)(\iota+h)\nm{\boldsymbol{f}}{L^2}.
\end{equation}
Moreover, if $\boldsymbol{u}_0\in [H^3(\Omega)]^2$ and $p_0\in H^2(\Omega)$, then
\begin{equation}\label{eq:finalerr3}
\wnm{(\boldsymbol{u}_0-\boldsymbol{u}_h,p_0-p_h)}\le C\log^3(1/h)(\iota+h^2)(\nm{\boldsymbol{u}_0}{H^3}+\nm{p_0}{H^2}).
\end{equation}
\end{theorem}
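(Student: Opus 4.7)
The plan is to carry out the same discrete-error analysis as in Theorem~\ref{thm:smooth}, but to compare $(\boldsymbol{u}_h, p_h)$ with the limit pair $(\boldsymbol{u}_0, p_0)$ rather than the strain-gradient solution $(\boldsymbol{u}, p)$. This bypasses the boundary-layer-induced blow-up $\nm{\boldsymbol{u}}{H^3} \sim \iota^{-1}$ at the cost of an extra model-consistency residual of size $\iota$ that measures how far $(\boldsymbol{u}_0, p_0)$ is from satisfying the $\iota$-dependent saddle-point system~\eqref{eq:mixap}.

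Concretely, I would pick an interpolant $(\boldsymbol{u}_{0,I}, p_{0,I}) \in V_h \times P_h$, apply the discrete inf-sup condition~\eqref{eq:infsup} to $(\boldsymbol{v}, q) = (\boldsymbol{u}_h - \boldsymbol{u}_{0,I}, p_h - p_{0,I})$, and split
\[
\mc{A}_{\iota,h}(\boldsymbol{v}, q; \boldsymbol{w}, z) = \bigl[(\boldsymbol{f}, \boldsymbol{w}) - \mc{A}_{\iota,h}(\boldsymbol{u}_0, p_0; \boldsymbol{w}, z)\bigr] + \mc{A}_{\iota,h}(\boldsymbol{u}_0 - \boldsymbol{u}_{0,I}, p_0 - p_{0,I}; \boldsymbol{w}, z).
\]
For the first bracket, using $\boldsymbol{w} \in V_h \subset [H_0^1(\Omega)]^2$, the weak form~\eqref{eq:elas}, and $p_0 = \lambda \divop \boldsymbol{u}_0$, all $\iota$-independent contributions cancel against $(\boldsymbol{f}, \boldsymbol{w})$; the second saddle-point equation is exactly satisfied since $p_0 = \lambda\divop\boldsymbol{u}_0$, and what remains is
\[
-\iota^2\bigl[2\mu(\nabla_h\boldsymbol{\epsilon}(\boldsymbol{u}_0), \nabla_h\boldsymbol{\epsilon}(\boldsymbol{w})) + (\nabla_h\divop\boldsymbol{w}, \nabla p_0)\bigr].
\]
By Cauchy-Schwarz, absorbing $\iota\nm{\nabla_h^2\boldsymbol{w}}{L^2}$ into $\nm{\nabla\boldsymbol{w}}{\iota,h} \le \wnm{(\boldsymbol{w}, z)}$, this is bounded by $C\iota(\nm{\boldsymbol{u}_0}{H^2} + \nm{p_0}{H^1})\wnm{(\boldsymbol{w}, z)} \le C\iota\nm{\boldsymbol{f}}{L^2}\wnm{(\boldsymbol{w}, z)}$ via~\eqref{eq:regelas}. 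The second bracket is dominated by the continuity of $\mc{A}_{\iota,h}$ times $\wnm{(\boldsymbol{u}_0 - \boldsymbol{u}_{0,I}, p_0 - p_{0,I})}$.

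For the interpolants I would take $\boldsymbol{u}_{0,I} = \varPi_h \boldsymbol{u}_0$ componentwise from~\eqref{eq:inter1} (legal since $\boldsymbol{u}_0 \in [H^2(\Omega)]^2$) and let $p_{0,I} \in P_h$ be a Clément/Scott-Zhang-type projection with the value at every $\boldsymbol{a} \in \mc{V}(\Omega)$ pinned to $0$. Standard scaling then yields $\wnm{(\boldsymbol{u}_0 - \boldsymbol{u}_{0,I}, p_0 - p_{0,I})} \le C(h + \iota)\nm{\boldsymbol{f}}{L^2}$ under~\eqref{eq:regelas}, and the sharper $C(h^2 + \iota h)(\nm{\boldsymbol{u}_0}{H^3} + \nm{p_0}{H^2})$ under the extra regularity of~\eqref{eq:finalerr3}. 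Feeding both bounds into~\eqref{eq:infsup} (which contributes the $\log^3(1/h)$ factor), noting that in the smoother case the residual size $C\iota\nm{\boldsymbol{f}}{L^2}$ is controlled by $C\iota(\nm{\boldsymbol{u}_0}{H^3} + \nm{p_0}{H^2})$, and applying the triangle inequality together with $\iota h \le \iota$ produces~\eqref{eq:finalerr2} and~\eqref{eq:finalerr3}.

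The main obstacle I foresee is constructing $p_{0,I} \in P_h$ with the right error control when $p_0$ has only $H^1$-regularity: pointwise nodal interpolation is not defined in 2D at this regularity, while the corner-vanishing constraint intrinsic to $P_h$ is forced upon us. This is handled by a corner-modified Scott-Zhang operator whose error at polygonal vertices is absorbed via the weighted seminorm $[p_0]_{H_+^1}$ inherent to $p_0 \in P = \divop V$. Once this interpolant is in place, the remainder of the argument is a direct combination of the consistency computation above with the inf-sup machinery already used for Theorem~\ref{thm:smooth}.
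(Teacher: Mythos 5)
Your proposal is correct and follows essentially the same route as the paper: the same splitting of $\mc{A}_{\iota,h}(\boldsymbol{v},q;\boldsymbol{w},z)$ into an interpolation error plus the consistency residual $-\iota^2(\D\na\boldsymbol{\epsilon}(\boldsymbol{u}_0),\na_h\boldsymbol{\epsilon}(\boldsymbol{w}))$ (which you correctly expand as $-\iota^2\bigl[2\mu(\na_h\boldsymbol{\epsilon}(\boldsymbol{u}_0),\na_h\boldsymbol{\epsilon}(\boldsymbol{w}))+(\na_h\divop\boldsymbol{w},\na p_0)\bigr]$), the same $O(\iota)$ bound on the residual via~\eqref{eq:regelas}, and the same use of the discrete inf-sup condition~\eqref{eq:infsup} to conclude. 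Your explicit attention to constructing a quasi-interpolant $p_{0,I}\in P_h$ when $p_0$ is only $H^1$ is a detail the paper leaves implicit, and your resolution of it is sound.
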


\begin{proof}
Let $\boldsymbol{v}=\boldsymbol{u}_h-\boldsymbol{u}_I^0$ and $q=p_h-p_I^0$,  with $\boldsymbol{u}_I^0\in V_h$ and $p_I^0\in P_h$ the interpolation of $\boldsymbol{u}_0$ and $p_0$, respectively. 
\begin{align*}
\mc{A}_{\iota,h}(\boldsymbol{v},q;\boldsymbol{w},z)&=\mc{A}_{\iota,h}(\boldsymbol{u}_h,p_h;\boldsymbol{w},z)-\mc{A}_{\iota,h}(\boldsymbol{u}_0,p_0;\boldsymbol{w},z)+\mc{A}_{\iota,h}(\boldsymbol{u}_0-\boldsymbol{u}_I^0,p_0-p_I^0;\boldsymbol{w},z)\\
&=(\boldsymbol{f},\boldsymbol{w})-\mc{A}_{\iota,h}(\boldsymbol{u}_0,p_0;\boldsymbol{w},z)+\mc{A}_{\iota,h}(\boldsymbol{u}_0-\boldsymbol{u}_I^0,p_0-p_I^0;\boldsymbol{w},z)\\
&=\mc{A}_{\iota,h}(\boldsymbol{u}_0-\boldsymbol{u}_I^0,p_0-p_I^0;\boldsymbol{w},z)-\iota^2(\D\na\boldsymbol{\epsilon}(\boldsymbol{u}_0),\na_h\boldsymbol{\epsilon}(\boldsymbol{w})).
\end{align*}
The boundedness of $\mc{A}_{\iota,h}$ gives
\[\mc{A}_{\iota,h}(\boldsymbol{u}_0-\boldsymbol{u}_I^0,p_0-p_I^0;\boldsymbol{w},z)\le C\wnm{(\boldsymbol{u}_0-\boldsymbol{u}_I^0,p_0-p_I^0)}\wnm{(\boldsymbol{w},z)}.\]
The standard interpolation error estimates in~\eqref{eq:localerr} 
\[\wnm{(\boldsymbol{u}_0-\boldsymbol{u}_I^0,p_0-p_I^0)}\le C(\iota h^k+h^{k+1})(\nm{\boldsymbol{u}_0}{H^{2+k}}+\nm{p_0}{H^{1+k}}),\qquad\text{for}\quad k=0,1.\]

The second term is bounded by
\[
\abs{\iota^2(\mb{D}\na\boldsymbol{\epsilon}(\boldsymbol{u}_0),\na_h\boldsymbol{\epsilon}(\boldsymbol{w}))}\le\iota(\nm{\boldsymbol{u}_0}{H^2}+\nm{p_0}{H^1})\nm{\na\boldsymbol{w}}{\iota,h}.
\]
A combination of the above inequalities, the discrete inf-sup condition~\eqref{eq:infsup} and the triangle inequalitieswith the regularity estimate~\eqref{eq:regelas} gives~\eqref{eq:finalerr2} and~\eqref{eq:finalerr3}.
\end{proof}

\begin{remark}
If we discretize $\boldsymbol{u}$ with the element in~\cite[~\S~4.1]{Huang:2023}, the logarithmic factor $\log^3(1/h)$ in Theorem~\ref{thm:smooth} and~\ref{thm:layer} may be improved to $\log^2(1/h)$.
\end{remark}
\section{Numerical examples}\label{sec:numer}
We present numerical examples to show the accuracy and the robustness of the proposed element. All examples are performed on the nonuniform mesh, which is generated by the function {\tt generateMesh} in the Partial Differential Equation Toolbox of MATLAB. We report the relative errors $\nm{\na(\boldsymbol{u}-\boldsymbol{u}_h)}{\iota,h}/\nm{\na\boldsymbol{u}}{\iota}$ and the rates of convergence.

In all the examples, $\Omega=(0,1)^2$ and $E=1$. The Lam\'e constants are given by
\[
\lambda=\dfrac{E\nu}{(1+\nu)(1-2\nu)}\quad\text{and} \quad \mu=\dfrac{E}{2(1+\nu)}.
\]
We are interested in the case when the Poisson's ratio $\nu$ is close to $0.5$. 

\subsection{The first example}
We first test the performance of the element for solving an incompressible problem. Let $\boldsymbol{u}=(u_1,u_2)$ with
\[
u_1=-\sin^4(\pi x)\sin^2(\pi y)\sin(2\pi y)\quad\text{and} \quad u_2=\sin^2(\pi x)\sin(2\pi x)\sin^4(\pi y).
\]
Note $\divop\boldsymbol{u}=0$, and $\boldsymbol{f}$ is independent of $\lambda$. We list the error and the rate of convergence in Table~\ref{tab:case1}. 
\begin{table}[htbp]\centering\caption{Relative errors and convergence rates for the 1st example}~\label{tab:case1}\begin{tabular}{lcccc}
\hline
$\iota\backslash h$ & 1/8 & 1/16 & 1/32 & 1/64\\
\hline
\multicolumn{5}{c}{$\nu=0.3000,\lambda=0.5769,\mu=0.3846$}\\
\hline
1e+00 & 2.683e-01 & 1.403e-01 & 6.797e-02 & 3.381e-02\\
rate & & 0.93 & 1.05 & 1.01\\
1e-06 & 4.551e-02 & 1.284e-02 & 3.049e-03 & 7.565e-04\\
rate & & 1.83 & 2.07 & 2.01\\
\hline
\multicolumn{5}{c}{$\nu=0.4999,\lambda=\text{1.6664e3}, \mu=0.3334$}\\
\hline
1e+00 & 2.681e-01 & 1.403e-01 & 6.796e-02 & 3.381e-02\\
rate & & 0.93 & 1.05 & 1.01\\
1e-06 & 4.552e-02 & 1.284e-02 & 3.049e-03 & 7.565e-04\\
rate & & 1.83 & 2.07 & 2.01\\
\hline
\end{tabular}\end{table}

In view of Table~\ref{tab:case1}, we observe that the method converges with first-order when $\iota$ is large, while converges with second-order when $\iota$ is small, which is in agreement with the error estimate in Theorem~\ref{thm:smooth}. Moreover, the convergence is uniform in $\lam$.
\subsection{The second example}
We test the performance of the element for solving a nearly incompressible problem with smooth solution. Let $\boldsymbol{u}=(u_1,u_2)$ and $p=\lam\divop\boldsymbol{u}$ with
\begin{align*}
u_1&=-2\sin^2(\pi x)\sin^2(\pi y)\sin(2\pi y),\quad u_2=\dfrac{2\mu+\lam}{\lam}\sin(2\pi x)\sin^4(\pi y).\\
p&=4\pi\mu\sin(2\pi x)\sin^2(\pi y)\sin(2\pi y).
\end{align*}
The solution satisfies the homogeneous boundary $\boldsymbol{u}|_{\pa\Om}=\partial_{\boldsymbol{n}}\boldsymbol{\sigma n}|_{\pa\Om}=0$. Moreover,
\[\Bar{\boldsymbol{u}}{:}=\lim_{\lam\to\infty}\boldsymbol{u}=(-2\sin^2(\pi x)\sin^2(\pi y)\sin(2\pi y), \sin(2\pi x)\sin^4(\pi y))^T,\]
the problem tends to completely incompressible because $\divop\Bar{\boldsymbol{u}}=0$. While the condition on the boundary $\{0,1\}\times(0,1)$ is destroyed, 
\[
\partial_{\boldsymbol{n}}\Bar{\boldsymbol{\sigma}}\boldsymbol{n}|_{\{0,1\}\times(0,1)}=-8\mu\pi^2(\sin^2(\pi y)\sin(2\pi y), 0)^T
\]
with $\Bar{\boldsymbol{\sigma}}=\C\eps(\Bar{\boldsymbol{u}})=2\mu\eps(\Bar{\boldsymbol{u}})$.
\begin{table}[htbp]\centering\caption{Relative errors and convergence rates for the 2nd example}~\label{tab:case2}\begin{tabular}{lcccc}
\hline
$\iota\backslash h$ & 1/8 & 1/16 & 1/32 & 1/64\\
\hline
\multicolumn{5}{c}{$\nu=0.3000,\lambda=0.5769,\mu=0.3846$}\\
\hline
1e+00 & 2.411e-01 & 1.246e-01 & 6.038e-02 & 3.006e-02\\
rate & & 0.95 & 1.05 & 1.01\\
1e-06 & 3.499e-02 & 9.479e-03 & 2.244e-03 & 5.572e-04\\
rate & & 1.88 & 2.08 & 2.01\\
\hline
\multicolumn{5}{c}{$\nu=0.4999,\lambda=\text{1.6664e3}, \mu=0.3334$}\\
\hline
1e+00 & 2.585e-01 & 1.366e-01 & 6.598e-02 & 3.300e-02\\
rate & & 0.92 & 1.05 & 1.00\\
1e-06 & 4.150e-02 & 1.119e-02 & 2.618e-03 & 6.527e-04\\
rate & & 1.89 & 2.10 & 2.00\\
\hline
\end{tabular}\end{table}

The convergence rate is reported in Table~\ref{tab:case2}. The optimal order is also observed for the nearly incompressible problem, which is consistent with the theoretical prediction in Theorem~\ref{thm:smooth}.
\subsection{The third example}
This example is motivated by~\cite{Wihler:2006} and is used to mimic the singularity of the displacement for the crack tip~\cite{GaoPark:2007}. The exact solution $\boldsymbol{u}=(u_1,u_2)$ expressed in the polar coordinates reads as
\[
u_1=u_{\rho}(\rho,\theta)\cos\theta-u_{\theta}(\rho,\theta)\sin\theta,\quad  
u_2=u_{\rho}(\rho,\theta)\sin\theta+u_{\theta}(\rho,\theta)\cos\theta,
\]
where
\begin{align*}
u_{\rho}&=\dfrac1{2\mu}\rho^{\alpha}\lr{-(\alpha+1)\cos((\alpha+1)\theta)+(C_2-(\alpha+1))C_1\cos((\alpha-1)\theta)},\\
u_{\theta}&=\dfrac1{2\mu}\rho^{\alpha}\lr{(\alpha+1)\sin((\alpha+1)\theta)+(C_2+\alpha-1)C_1\sin((\alpha-1)\theta)},
\end{align*}
and $\alpha=1.5, \omega=3\pi/4$, 
\[
C_1=-\dfrac{\cos((\alpha+1)\omega)}{\cos((\alpha-1)\omega)}\quad\text{and}\quad C_2=\dfrac{2(\lambda+2\mu)}{\lambda+\mu}.
\]

The problem is nearly incompressible because
\[
\divop\boldsymbol{u}=-\dfrac{3(1+\sqrt2)}{\lambda+\mu}\rho^{1/2}\cos(\theta/2).
\]
A direct calculation gives that $f\equiv 0$, while the inhomogeneous boundary arise in~\eqref{eq:sgbvp}. It is straightforward to verify that $\boldsymbol{u}\in [H^{5/2-\varepsilon}(\Omega)]^2$ and $p\in H^{3/2-\varepsilon}(\Omega)$ for any small number $\varepsilon>0$. 
\begin{table}[htbp]\centering\caption{Relative errors and convergence rates for the 3rd example}~\label{tab:case4}\begin{tabular}{lcccc}
\hline
$\iota\backslash h$ & 1/8 & 1/16 & 1/32 & 1/64\\
\hline
\multicolumn{5}{c}{$\nu=0.3000,\lambda=0.5769,\mu=0.3846$}\\
\hline
1e+00 & 7.923e-02 & 5.652e-02 & 4.022e-02 & 2.859e-02\\
rate & & 0.49 & 0.49 & 0.49\\
1e-06 & 1.836e-03 & 6.576e-04 & 2.336e-04 & 8.279e-05\\
rate & & 1.48 & 1.49 & 1.50\\
\hline
\multicolumn{5}{c}{$\nu=0.4999,\lambda=\text{1.6664e3}, \mu=0.3334$}\\
\hline
1e+00 & 8.763e-02 & 6.268e-02 & 4.466e-02 & 3.179e-02\\
rate & & 0.48 & 0.49 & 0.49\\
1e-06 & 2.225e-03 & 7.970e-04 & 2.831e-04 & 1.003e-04\\
rate & & 1.48 & 1.49 & 1.50\\
\hline
\end{tabular}\end{table}

Table~\ref{tab:case4} shows that $\nm{\na(\boldsymbol{u}-\boldsymbol{u}_h)}{\iota,h}\simeq\mc{O}(h^{1/2})$ when $\iota$ is large and $\nm{\na(\boldsymbol{u}-\boldsymbol{u}_h)}{\iota,h}\simeq\mc{O}(h^{3/2})$ when $\iota$ is close to zero.
\subsection{The fourth example}
In the last example, we test the relative error and rate of convergence for the numerical solution to the unperturbed second-order elliptic system, i.e., we report the relative error $\nm{\na(\boldsymbol{u}_0-\boldsymbol{u}_h)}{\iota,h}/\nm{\na \boldsymbol{u}_0}{\iota}$ in Table~\ref{tab:case3}. When the microscope parameter $\iota\to 0$, the boundary value problem~\eqref{eq:sgbvp} changes to~\eqref{eq:elas}. Let $\boldsymbol{u}_0=(u_1^0,u_2^0)$ with
\[
u_1^0=-\sin^2(\pi x)\sin(2\pi y), \quad u_2^0=\sin(2\pi x)\sin^2(\pi y)
\]
be the solution of~\eqref{eq:elas}. The source term $f$ is computed by~\eqref{eq:elas}, which is independent of $\lambda$ because $\divop\boldsymbol{u}_0=0$. The exact solution $\boldsymbol{u}$ of~\eqref{eq:sgbvp} is unknown, while it is non-smooth and has a sharp layer near the boundary. 
\begin{table}[htbp]\centering\caption{Relative errors and convergence rates for the 4th example}~\label{tab:case3}\begin{tabular}{lcccc}
\hline
$\iota\backslash h$ & 1/8 & 1/16 & 1/32 & 1/64\\
\hline
\multicolumn{5}{c}{$\nu=0.3000,\lambda=0.5769,\mu=0.3846$}\\
\hline
1e-04 & 2.162e-02 & 5.775e-03 & 1.402e-03 & 3.455e-04\\
rate & & 1.90 & 2.04 & 2.02\\
1e-06 & 2.162e-02 & 5.775e-03 & 1.401e-03 & 3.451e-04\\
rate & & 1.90 & 2.04 & 2.02\\
\hline
\multicolumn{5}{c}{$\nu=0.4999,\lambda=\text{1.6664e3}, \mu=0.3334$}\\
\hline
1e-04 & 2.163e-02 & 5.775e-03 & 1.402e-03 & 3.455e-04\\
rate & & 1.90 & 2.04 & 2.02\\
1e-06 & 2.163e-02 & 5.775e-03 & 1.401e-03 & 3.451e-04\\
rate & & 1.90 & 2.04 & 2.02\\
\hline
\end{tabular}\end{table}

We observe that the convergence rate for the element is still optimal, which is consistent with Theorem~\ref{thm:layer}, hence the element is also robust for the unperturbed second-order elliptic system.
\section{Conclusion}
We establish a novel broken Hardy inequality within the critical Sobolev space $W^{1,d}$. As a direct application of this inequality, we develop a quasi-bounded inversion of the divergence operator, which is essential for proving the inf-sup stability of a finite element pair that approximates the strain gradient elasticity with natural boundary conditions over polygon. The method remains robust even in the incompressible limit, achieving nearly optimal rate of convergence uniformly with respect to the microscopic material parameter. While the finite element can be extended to three-dimensional problems, the inversion of the divergence operator may differ, which we intend to address in a future work.
\begin{appendix}
\section{Regularity estimates on the smooth domain}\label{appd:sge}
In this section, we assume that the domain $\Omega$ is smooth, and show that the regularity estimates for Problem~\eqref{eq:sgbvp} is uniform in $\lambda$.
\subsection{Lower regularity estimates}
Due to the following lemma, the pressure space $P$ reduces to $L_0^2(\Omega)\cap H^1(\Omega)$.
\begin{lemma}[{\cite[Theorem 3.1]{Danchin:2013}}]\label{lema:divop}
Let $\pa\Om\in C^2$. For $q\in P$, there exists $\boldsymbol{v}\in V$ such that $\divop\boldsymbol{z}=q$ and
\begin{equation}\label{eq:estdiv1}
\nm{\na\boldsymbol{v}}{L^2}\le C\nm{q}{L^2},\qquad\nm{\na^2\boldsymbol{v}}{L^2}\le C\nm{\na q}{L^2},
\end{equation}
where $C$ only depends on $\Omega$.
\end{lemma}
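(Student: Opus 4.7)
The plan is to construct $\boldsymbol{v}$ as the velocity component of an auxiliary Stokes system with prescribed divergence $q$ and vanishing body force, and then read both estimates off from the classical well-posedness and regularity theory for Stokes on a $C^2$ domain.

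Concretely, I would consider the problem: find $(\boldsymbol{v},\pi)\in [H_0^1(\Om)]^2\times L_0^2(\Om)$ satisfying
\begin{equation*}
-\Delta\boldsymbol{v}+\nabla\pi=\boldsymbol{0},\qquad \divop\boldsymbol{v}=q\quad\text{in }\Om.
\end{equation*}
Since $q\in L_0^2(\Om)$, the compatibility condition is met, and the standard inf-sup condition for $\divop:[H_0^1(\Om)]^2\to L_0^2(\Om)$ (a Bogovskii-type result valid on any bounded Lipschitz domain) yields unique solvability together with the bound $\nm{\nabla\boldsymbol{v}}{L^2}\le C\nm{q}{L^2}$, which is the first inequality in~\eqref{eq:estdiv1}.

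For the second inequality, I would invoke the Cattabriga--Solonnikov regularity theorem for the Stokes system on $C^2$ domains: the vanishing force together with $q\in H^1(\Om)$ upgrades the solution to $(\boldsymbol{v},\pi)\in [H^2(\Om)]^2\times H^1(\Om)$ with $\nm{\boldsymbol{v}}{H^2}+\nm{\pi}{H^1}\le C\nm{q}{H^1}$. Because $q\in L_0^2(\Om)$ has vanishing mean, Poincar\'e's inequality converts $\nm{q}{H^1}$ into $C\nm{\nabla q}{L^2}$, and the resulting $\boldsymbol{v}$ lies in $V=[H_0^1(\Om)\cap H^2(\Om)]^2$ and obeys both desired bounds.

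The main obstacle is the Stokes $H^2$-regularity step, which is precisely what breaks down once $\pa\Om$ has corners---this is exactly why the weighted $H_+^1$ seminorm of Arnold--Scott--Vogelius enters on a polygon, as discussed in~\S\ref{sec:sge}. In the smooth case the proof proceeds by straightening the boundary via local $C^2$ charts, applying interior regularity for Stokes, and combining with a boundary Schauder-type estimate for the resulting perturbed system; the ADN framework invoked elsewhere in the paper provides one clean way to carry out the boundary estimate.
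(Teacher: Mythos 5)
Your construction is sound, but note that the paper does not prove this lemma at all: it is imported verbatim as \cite[Theorem 3.1]{Danchin:2013}, so there is no in-paper argument to compare against. Your Stokes-based derivation is a legitimate self-contained substitute. Taking $\boldsymbol{v}$ to be the velocity of the Stokes problem $-\Delta\boldsymbol{v}+\nabla\pi=\boldsymbol{0}$, $\divop\boldsymbol{v}=q$, $\boldsymbol{v}\in[H_0^1(\Om)]^2$ gives a \emph{single} field satisfying both bounds, which is the one point that a naive appeal to Bogovskii alone would miss (the Bogovskii right inverse bounded $L^2\to H^1$ need not land in $H^2$). The energy estimate $\nm{\na\boldsymbol{v}}{L^2}\le C\nm{q}{L^2}$ follows from lifting the divergence and solving the homogeneous-divergence Stokes problem, and the Cattabriga--Solonnikov $H^2$ theory on a $C^2$ (indeed $C^{1,1}$) boundary together with Poincar\'e for mean-zero $q$ gives $\nm{\na^2\boldsymbol{v}}{L^2}\le C\nm{q}{H^1}\le C\nm{\na q}{L^2}$. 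What the citation to Danchin--Mucha buys over your route is chiefly economy and sharper hypotheses in their general framework; what your route buys is transparency and consistency with the ADN machinery the paper already deploys in the Appendix. Your closing remark correctly identifies why the smoothness of $\pa\Om$ is essential here and why the weighted space $H_+^1$ of Arnold--Scott--Vogelius must replace $H^1$ on a polygon.
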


For all $\boldsymbol{v},\boldsymbol{w}\in V$ and $q,z\in P$, we define a bilinear form 
\[
\mc{A}_\iota(\boldsymbol{v},q;\boldsymbol{w},z){:}=a_\iota(\boldsymbol{v},\boldsymbol{w})+b_\iota(\boldsymbol{w},q)+b_\iota(\boldsymbol{v},z)-\lambda^{-1}c_\iota(q,z)
\]
and the norm $\inm{(\boldsymbol{w},z)}{:}=\nm{\na\boldsymbol{w}}{\iota}+\nm{z}{\iota}+\lambda^{-1/2}\nm{z}{\iota}$.

\begin{lemma}
There exists a unique solution $\boldsymbol{u}\in V$ and $p\in P$ satisfying~\eqref{eq:mix} such that
\begin{equation}\label{eq:estsolu}
\inm{(\boldsymbol{u},p)}\le C\nm{f}{H^{-1}}.
\end{equation}
\end{lemma}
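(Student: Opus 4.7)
The plan is to establish~\eqref{eq:estsolu} by verifying a continuous inf-sup condition for the saddle-point bilinear form $\mathcal{A}_\iota$ on $(V\times P)\times(V\times P)$, with constant independent of $\iota\in(0,1]$ and $\lambda\geq\lambda_0>0$; existence, uniqueness, and the estimate then follow from the Banach--Nečas theorem applied to~\eqref{eq:mix}. The two building blocks will be coercivity of $a_\iota$ on $V$ and an inf-sup condition for $b_\iota$ between $V$ and $P$.

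For coercivity, combining the first Korn inequality~\eqref{eq:1stkorn} with the $H^2$-Korn inequality~\eqref{eq:h2korn} from Lemma~\ref{lema:korn} yields a constant $c_a>0$ depending only on $\mu$ such that $a_\iota(\boldsymbol{v},\boldsymbol{v})\geq c_a\nm{\nabla\boldsymbol{v}}{\iota}^2$ for all $\boldsymbol{v}\in V$. For the inf-sup of $b_\iota$, I would use Lemma~\ref{lema:divop}: given $q\in P$, pick $\boldsymbol{v}_q\in V$ with $\divop\boldsymbol{v}_q=q$ and the two bounds in~\eqref{eq:estdiv1}, so that $b_\iota(\boldsymbol{v}_q,q)=\nm{q}{\iota}^2$ and $\nm{\nabla\boldsymbol{v}_q}{\iota}\leq C_b\nm{q}{\iota}$, with $C_b$ independent of $\iota$ and $\lambda$.

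The main step is a test-function construction. For given $(\boldsymbol{u},p)\in V\times P$, I would take $(\boldsymbol{w},z)=(\boldsymbol{u}+\delta\boldsymbol{v}_p,-p)$ for a small parameter $\delta>0$ to be fixed. A direct computation gives
\[
\mathcal{A}_\iota(\boldsymbol{u},p;\boldsymbol{w},z)=a_\iota(\boldsymbol{u},\boldsymbol{u})+\delta\,a_\iota(\boldsymbol{u},\boldsymbol{v}_p)+\delta\nm{p}{\iota}^2+\lambda^{-1}\nm{p}{\iota}^2,
\]
because the two occurrences of $b_\iota(\boldsymbol{u},p)$ cancel. Bounding $a_\iota(\boldsymbol{u},\boldsymbol{v}_p)$ by $C\nm{\nabla\boldsymbol{u}}{\iota}\nm{\nabla\boldsymbol{v}_p}{\iota}\leq CC_b\nm{\nabla\boldsymbol{u}}{\iota}\nm{p}{\iota}$ and applying Young's inequality, I can choose $\delta$ small enough, depending only on $\mu$ and $C_b$, so that
\[
\mathcal{A}_\iota(\boldsymbol{u},p;\boldsymbol{w},z)\geq c\bigl(\nm{\nabla\boldsymbol{u}}{\iota}^2+\nm{p}{\iota}^2+\lambda^{-1}\nm{p}{\iota}^2\bigr)\geq c\,\inm{(\boldsymbol{u},p)}^2,
\]
while $\inm{(\boldsymbol{w},z)}\leq C\inm{(\boldsymbol{u},p)}$. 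This delivers the inf-sup inequality for $\mathcal{A}_\iota$ with constants independent of $\iota$ and $\lambda$. The estimate~\eqref{eq:estsolu} follows because the right-hand side functional $(\boldsymbol{f},\boldsymbol{w})$ is bounded by $\nm{\boldsymbol{f}}{H^{-1}}\nm{\nabla\boldsymbol{w}}{L^2}\leq\nm{\boldsymbol{f}}{H^{-1}}\inm{(\boldsymbol{w},z)}$.

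The main obstacle will be controlling all constants uniformly in the singular limits $\iota\to 0$ and $\lambda\to\infty$; in particular one must verify that Lemma~\ref{lema:divop} produces a bound for $\boldsymbol{v}_q$ in the weighted norm $\nm{\nabla\cdot}{\iota}$ (which is immediate from~\eqref{eq:estdiv1} once one notes that both $L^2$ and $H^1$ bounds on $\divop\boldsymbol{v}=q$ are available), and that the penalty term $-\lambda^{-1}c_\iota(p,z)$ contributes with the correct sign under the choice $z=-p$.
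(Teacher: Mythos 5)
Your proof is correct and follows essentially the same route as the paper: coercivity of $a_\iota$ from the two Korn inequalities in Lemma~\ref{lema:korn}, the inf-sup condition for $b_\iota$ from the divergence lemma of Danchin (Lemma~\ref{lema:divop}), and then the global inf-sup for $\mc{A}_\iota$. The only difference is that where the paper invokes the abstract saddle-point-with-penalty theorem of Braess as a black box, you carry out the test-function construction $(\boldsymbol{w},z)=(\boldsymbol{u}+\delta\boldsymbol{v}_p,-p)$ explicitly, which is precisely the argument the paper itself deploys later for the discrete inf-sup condition~\eqref{eq:infsup}.
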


\begin{proof}
Using Lemma~\ref{lema:korn}, we obtain
\begin{align*}
a_{\iota}(\boldsymbol{v},\boldsymbol{v})&=2\mu\Lr{\nm{\boldsymbol{\epsilon}(\boldsymbol{v})}{L^2}^2+\iota^2\nm{\na\boldsymbol{\epsilon}(\boldsymbol{v})}{L^2}^2}\\
&\ge 2\mu\Lr{\dfrac12\nm{\nabla\boldsymbol{v}}{L^2}^2+\iota^2\Lr{1-1/\sqrt2}\nm{\nabla^2\boldsymbol{v}}{L^2}^2}\\
&\ge\dfrac{\mu}2\nm{\nabla\boldsymbol{v}}{\iota}^2.
\end{align*}

Next, by Lemma~\ref{lema:divop}, for any $p\in P$, there exists $\boldsymbol{v}_0\in V$ such that $\divop\boldsymbol{v}_0=p$ and $\nm{\na \boldsymbol{v}_0}{\iota}\le C\nm{p}{\iota}$. Therefore,
\[
\sup_{\boldsymbol{v}\in V}\dfrac{b_\iota(\boldsymbol{v},p)}{\nm{\nabla\boldsymbol{v}}{\iota}}\ge\dfrac{b_{\iota}(\boldsymbol{v}_0,p)}{\nm{\na \boldsymbol{v}_0}{\iota}}=\dfrac{\nm{p}{\iota}^2}{\nm{\na \boldsymbol{v}_0}{\iota}}\ge C\nm{p}{\iota}.
\]

Given the ellipticity of $a_{\iota}$ over $V$ and the inf-sup condition for $b_{\iota}$, we use~\cite[Theorem 2]{Braess:1996} to conclude that
\begin{equation}\label{eq:coninfsup}
  \inf_{(\boldsymbol{v},q)\in V\times P}\sup_{(\boldsymbol{w},z)\in V\times P}\dfrac{\mc{A}_\iota(\boldsymbol{v},q;\boldsymbol{w},z)}{\inm{(\boldsymbol{w},z)}\inm{(\boldsymbol{v},q)}}\ge C.
\end{equation}
It immediately gives the well-posedness of~\eqref{eq:mix} and the estimate~\eqref{eq:estsolu} by noting
\[
  \abs{(\boldsymbol{f},\boldsymbol{v})}\le\nm{f}{H^{-1}}\nm{\boldsymbol{v}}{H^1}\le C\nm{f}{H^{-1}}\nm{\nabla\boldsymbol{v}}{L^2}\le C\nm{f}{H^{-1}}\nm{\nabla\boldsymbol{v}}{\iota}.
\]
\end{proof}

\begin{theorem}\label{thm:reg1}
 Let $(\boldsymbol{u},p)$ be the solution of~\eqref{eq:mix}, $\boldsymbol{u}_0$ be the solution of~\eqref{eq:elas}, and $p_0=\lam\divop\boldsymbol{u}_0$. There holds
 \begin{equation}\label{eq:est}
     \inm{(\boldsymbol{u}-\boldsymbol{u}_0,p-p_0)}\le C\iota\nm{\boldsymbol{f}}{L^2}.
 \end{equation}
 Moreover, if $\boldsymbol{u}_0\in[H^3(\Omega)]^2$ and $p_0\in H^2(\Omega)$, then
\begin{equation}\label{eq:esthigh}
\inm{(\boldsymbol{u}-\boldsymbol{u}_0,p-p_0)}\le C\iota^{3/2}(\nm{\boldsymbol{u}_0}{H^3}+\nm{p_0}{H^2}).
\end{equation}

\end{theorem}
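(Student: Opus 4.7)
The plan is to derive an error equation for $(\boldsymbol{e},e_p){:}=(\boldsymbol{u}-\boldsymbol{u}_0,p-p_0)$ and then invoke the continuous inf-sup condition~\eqref{eq:coninfsup}. Since $p_0=\lam\divop\boldsymbol{u}_0$, a direct substitution shows $b_\iota(\boldsymbol{u}_0,z)-\lam^{-1}c_\iota(p_0,z)=0$ for every $z\in P$, so $(\boldsymbol{u}_0,p_0)$ automatically solves the second equation of~\eqref{eq:mix}. For the first equation, writing $(\C\boldsymbol\eps(\boldsymbol{u}_0),\boldsymbol\eps(\boldsymbol{w}))=2\mu(\boldsymbol\eps(\boldsymbol{u}_0),\boldsymbol\eps(\boldsymbol{w}))+(p_0,\divop\boldsymbol{w})$ and using~\eqref{eq:elas}, I would obtain the clean residual identity
\[
\mc{A}_\iota(\boldsymbol{e},e_p;\boldsymbol{w},z)=-2\mu\iota^2(\na\boldsymbol\eps(\boldsymbol{u}_0),\na\boldsymbol\eps(\boldsymbol{w}))-\iota^2(\na\divop\boldsymbol{w},\na p_0),
\]
in which only the $\iota^2$-order perturbation survives.

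For~\eqref{eq:est}, I would bound the right-hand side by Cauchy--Schwarz, factorizing $\iota^2=\iota\cdot\iota$: one $\iota$ stays with $\nm{\na\boldsymbol\eps(\boldsymbol{u}_0)}{L^2}$ or $\nm{\na p_0}{L^2}$, both controlled by $\nm{\boldsymbol{u}_0}{H^2}+\nm{p_0}{H^1}\le C\nm{\boldsymbol{f}}{L^2}$ thanks to~\eqref{eq:regelas}, and the other $\iota$ combines with $\nm{\na\boldsymbol\eps(\boldsymbol{w})}{L^2}$ or $\nm{\na\divop\boldsymbol{w}}{L^2}$ into $\iota\nm{\na^2\boldsymbol{w}}{L^2}\le\nm{\na\boldsymbol{w}}{\iota}\le\inm{(\boldsymbol{w},z)}$. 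Applying~\eqref{eq:coninfsup} then yields~\eqref{eq:est}.

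For~\eqref{eq:esthigh} I would integrate by parts in both residual terms, which is now legitimate under the assumptions $\boldsymbol{u}_0\in[H^3(\Om)]^2$ and $p_0\in H^2(\Om)$. The interior integrals $\iota^2(\Del\boldsymbol\eps(\boldsymbol{u}_0),\boldsymbol\eps(\boldsymbol{w}))$ and $\iota^2(\divop\boldsymbol{w},\Del p_0)$ are bounded by $C\iota^2(\nm{\boldsymbol{u}_0}{H^3}+\nm{p_0}{H^2})\nm{\na\boldsymbol{w}}{L^2}$, which is already $\le C\iota^{3/2}(\nm{\boldsymbol{u}_0}{H^3}+\nm{p_0}{H^2})\inm{(\boldsymbol{w},z)}$ since $\iota\le 1$. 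The boundary contributions $\iota^2\int_{\pa\Om}\pa_{\boldsymbol{n}}\boldsymbol\eps(\boldsymbol{u}_0){:}\boldsymbol\eps(\boldsymbol{w})\dsx$ and $\iota^2\int_{\pa\Om}(\divop\boldsymbol{w})(\pa_{\boldsymbol{n}}p_0)\dsx$ are the delicate part: although $\boldsymbol{w}|_{\pa\Om}=0$, its normal derivative is free, so $\boldsymbol\eps(\boldsymbol{w})$ and $\divop\boldsymbol{w}$ have nontrivial traces on $\pa\Om$. Here I would apply the trace interpolation $\nm{v}{L^2(\pa\Om)}\le C\nm{v}{L^2(\Om)}^{1/2}\nm{v}{H^1(\Om)}^{1/2}$ to $v=\boldsymbol\eps(\boldsymbol{w})$ and $v=\divop\boldsymbol{w}$, followed by $\nm{v}{L^2}^{1/2}\nm{v}{H^1}^{1/2}\le\nm{\na\boldsymbol{w}}{\iota}^{1/2}(\iota^{-1}\nm{\na\boldsymbol{w}}{\iota})^{1/2}=\iota^{-1/2}\nm{\na\boldsymbol{w}}{\iota}$, which yields the crucial $\iota^{-1/2}$ that upgrades the boundary contributions to $O(\iota^{3/2}(\nm{\boldsymbol{u}_0}{H^3}+\nm{p_0}{H^2})\inm{(\boldsymbol{w},z)})$.

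The hard part is this trace-interpolation step: the $\iota$-weighted norm controls $\na^2\boldsymbol{w}$ only up to an $\iota^{-1}$ penalty, so the $L^2(\pa\Om)$-trace of a derivative of $\boldsymbol{w}$ costs exactly $\iota^{-1/2}$ of the energy norm, which is precisely what is needed to turn one factor of $\iota^2$ into $\iota^{3/2}$. The exponent $3/2$ reflects the classical square-root order of an $H^2$-boundary-layer correction for this class of singularly perturbed problems. Combining the interior and boundary bounds with~\eqref{eq:coninfsup} then completes~\eqref{eq:esthigh}.
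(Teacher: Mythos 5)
Your proposal is correct and follows essentially the same route as the paper: the same residual identity $\mc{A}_\iota(\boldsymbol{u}-\boldsymbol{u}_0,p-p_0;\boldsymbol{w},z)=-\iota^2(\D\na\boldsymbol\eps(\boldsymbol{u}_0),\na\boldsymbol\eps(\boldsymbol{w}))$ combined with the continuous inf-sup condition~\eqref{eq:coninfsup}, the same direct Cauchy--Schwarz bound with~\eqref{eq:regelas} for~\eqref{eq:est}, and the same integration by parts followed by the multiplicative trace inequality $\nm{\na\boldsymbol{w}}{L^2}^{1/2}\nm{\na^2\boldsymbol{w}}{L^2}^{1/2}\le C\iota^{-1/2}\nm{\na\boldsymbol{w}}{\iota}$ to extract the $\iota^{3/2}$ rate in~\eqref{eq:esthigh}. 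The only cosmetic difference is that the paper keeps the boundary term in the reduced form $\int_{\pa\Om}(\pa_{\boldsymbol{n}}\C\boldsymbol\eps(\boldsymbol{u}_0)\boldsymbol{n})\cdot\pa_{\boldsymbol{n}}\boldsymbol{w}\dsx$ (using $\boldsymbol{w}|_{\pa\Om}=\boldsymbol{0}$), which does not change the estimate.
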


\begin{proof}
For any $\boldsymbol{v}\in V$ and $q\in P$, a direct calculation gives
\[
  \mc{A}_\iota(\boldsymbol{u}-\boldsymbol{u}_0,p-p_0;\boldsymbol{v},q)=\mc{A}_0(\boldsymbol{u}_0,p_0;\boldsymbol{v},q)-\mc{A}_\iota(\boldsymbol{u}_0,p_0;\boldsymbol{v},q)=-\iota^2(\D\na\boldsymbol{\epsilon}(\boldsymbol{u}_0),\na\boldsymbol{\epsilon}(\boldsymbol{v})).
\]
By~\eqref{eq:regelas}, we bound
\[
\abs{\iota^2(\mb{D}\na\boldsymbol{\epsilon}(\boldsymbol{u}_0),\na\boldsymbol{\epsilon}(\boldsymbol{v}))}\le C\iota^2(\nm{\boldsymbol{u}_0}{H^2}+\nm{p_0}{H^1})\nm{\nabla^2\boldsymbol{v}}{L^2}\le C\iota\nm{\boldsymbol{f}}{L^2}\nm{\nabla\boldsymbol{v}}{\iota}.
\]
This together with~\eqref{eq:coninfsup} gives~\eqref{eq:est}.

Integration by parts, we obtain
\[
-(\iota^2\mb{D}\na\boldsymbol{\epsilon}(\boldsymbol{u}_0),\na\boldsymbol{\epsilon}(\boldsymbol{v}))=\iota^2(\Delta\C\boldsymbol{\epsilon}(\boldsymbol{u}_0),\nabla\boldsymbol{v})-\iota^2\int_{\pa\Om}(\pa_{\boldsymbol{n}}\C\boldsymbol{\epsilon}(\boldsymbol{u}_0)\boldsymbol{n})\cdot\pa_{\boldsymbol{n}}\boldsymbol{v}\dsx.
\]
Using the trace inequality, we bound
\begin{align}\label{eq:est3}
\abs{\iota^2(\mb{D}\na\boldsymbol{\epsilon}(\boldsymbol{u}_0),\na\boldsymbol{\epsilon}(\boldsymbol{v}))}&\le C\iota^2\nm{\C\boldsymbol{\epsilon}(\boldsymbol{u}_0)}{H^2}\Lr{\nm{\na\boldsymbol{v}}{L^2}+\nm{\na\boldsymbol{v}}{L^2}^{1/2}\nm{\na^2\boldsymbol{v}}{L^2}^{1/2}}\nn\\
&\le C\iota^2\nm{\C\boldsymbol{\epsilon}(\boldsymbol{u}_0)}{H^2}\Lr{\nm{\na\boldsymbol{v}}{L^2}+\iota^{-1/2}\nm{\na\boldsymbol{v}}{L^2}+\iota^{1/2}\nm{\na^2\boldsymbol{v}}{L^2}}\\
&\le C\iota^{3/2}\nm{\C\boldsymbol{\epsilon}(\boldsymbol{u}_0)}{H^2}\nm{\na\boldsymbol{v}}{\iota}.\nonumber
\end{align}
This together with~\eqref{eq:coninfsup} yields~\eqref{eq:esthigh}. 
\end{proof}

\subsection{Higher regularity estimates}
To step further, we study an auxiliary boundary value problem, which may be viewed as the leading order part of~\eqref{eq:sgbvp}:
\begin{equation}\label{eq:auxprob}
\Delta\mc{L}\boldsymbol{w}=\boldsymbol{F}\quad\text{in\quad}\Om,\qquad
\boldsymbol{w}=\partial_{\boldsymbol{n}}\C\boldsymbol{\epsilon}(\boldsymbol{w})\boldsymbol{n}=0\quad\text{on\quad}\pa\Om,
\end{equation}
where
\(\mc{L}\boldsymbol{w}=\mu\Delta\boldsymbol{w}+(\lam+\mu)\na\divop\boldsymbol{w}\).

The well-posedness of the above problem is as follows.
\begin{lemma}
There exists a unique $\boldsymbol{w}\in V$ satisfying~\eqref{eq:auxprob} such that
\begin{equation}\label{eq:auxpriori}
\nm{\boldsymbol{w}}{H^2}+\lam\nm{\divop\boldsymbol{w}}{H^1}\le C\nm{\boldsymbol{F}}{H^{-2}}.
\end{equation}
\end{lemma}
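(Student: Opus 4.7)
The plan is to recast~\eqref{eq:auxprob} as a saddle-point problem in the pair $(\boldsymbol w,p)$ with $p=\lam\divop\boldsymbol w$, and then invoke the Braess--Brezzi framework~\cite[Theorem 2]{Braess:1996}, following the same template that yields the lower-order inf-sup~\eqref{eq:coninfsup}. Using the identity $\mc L\boldsymbol w=2\mu\divop\boldsymbol\eps(\boldsymbol w)+\nabla p$, I would test $\Delta\mc L\boldsymbol w=\boldsymbol F$ against an arbitrary $\boldsymbol v\in V$ and integrate by parts twice; since $\boldsymbol v=\boldsymbol 0$ on $\pa\Om$ and the natural condition $\partial_{\boldsymbol n}\boldsymbol{\sigma n}=\boldsymbol 0$ holds, the boundary contributions collapse and I arrive at the symmetric mixed system: find $(\boldsymbol w,p)\in V\times P$ such that
\[
\widetilde a(\boldsymbol w,\boldsymbol v)+\widetilde b(\boldsymbol v,p)=(\boldsymbol F,\boldsymbol v),\qquad \widetilde b(\boldsymbol w,q)-\lam^{-1}\widetilde c(p,q)=0
\]
holds for all $(\boldsymbol v,q)\in V\times P$, with $\widetilde a(\boldsymbol w,\boldsymbol v){:}=2\mu(\na\boldsymbol\eps(\boldsymbol w),\na\boldsymbol\eps(\boldsymbol v))$, $\widetilde b(\boldsymbol v,q){:}=(\na\divop\boldsymbol v,\na q)$ and $\widetilde c(p,q){:}=(\na p,\na q)$.

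The next step is to verify the two Brezzi conditions uniformly in $\lam$. Coercivity of $\widetilde a$ on the whole space $V$ follows from the $H^2$-Korn inequality~\eqref{eq:h2korn} combined with the Poincar\'e inequality, which gives $\widetilde a(\boldsymbol v,\boldsymbol v)\ge c_0\nm{\boldsymbol v}{H^2}^2$ for some $c_0$ depending only on $\mu$ and $\Om$. For the inf-sup of $\widetilde b$, Lemma~\ref{lema:divop} supplies, for each $p\in P$, a $\boldsymbol v_0\in V$ with $\divop\boldsymbol v_0=p$ and $\nm{\boldsymbol v_0}{H^2}\le C\nm{\na p}{L^2}$, so that $\widetilde b(\boldsymbol v_0,p)=\nm{\na p}{L^2}^2$ and, combined with the Poincar\'e inequality on $p\in L_0^2(\Om)$,
\[
\sup_{\boldsymbol v\in V}\dfrac{\widetilde b(\boldsymbol v,p)}{\nm{\boldsymbol v}{H^2}}\ge c_1\nm{p}{H^1}.
\]

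With ellipticity and inf-sup in hand, \cite[Theorem 2]{Braess:1996} yields a global inf-sup for the combined saddle-point form in the norm $\nm{\boldsymbol v}{H^2}+\nm{q}{H^1}+\lam^{-1/2}\nm{q}{H^1}$, whence existence, uniqueness, and $\nm{\boldsymbol w}{H^2}+\nm{p}{H^1}\le C\nm{\boldsymbol F}{H^{-2}}$, the last step because $\abs{(\boldsymbol F,\boldsymbol v)}\le\nm{\boldsymbol F}{H^{-2}}\nm{\boldsymbol v}{H^2}$ for $\boldsymbol v\in V$. Substituting $p=\lam\divop\boldsymbol w$ then produces~\eqref{eq:auxpriori}. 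The main obstacle I foresee is justifying that $\partial_{\boldsymbol n}\boldsymbol{\sigma n}=\boldsymbol 0$ is exactly the natural boundary condition of the proposed weak form on the curved $\pa\Om$: the integration by parts produces a boundary integrand that matches $\partial_{\boldsymbol n}(\boldsymbol{\sigma n})$ only up to a curvature correction of the form $\boldsymbol\sigma\partial_{\boldsymbol n}\boldsymbol n$, which has to be absorbed as a consistency term (in the spirit of~\cite[Appendix D]{BochevGunzburg:2009}) rather than added as a separate constraint. Once this bookkeeping is complete, the quantitative estimate follows routinely from the Brezzi machinery.
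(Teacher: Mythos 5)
Your proof is correct, but it takes a genuinely different route from the paper's. The paper does not pass to a mixed formulation at all: it applies the Lax--Milgram theorem directly to the $\lam$-dependent coercive form $A(\boldsymbol v,\boldsymbol z)=2\mu(\na\boldsymbol\eps(\boldsymbol v),\na\boldsymbol\eps(\boldsymbol z))+\lam(\na\divop\boldsymbol v,\na\divop\boldsymbol z)$, which yields existence, uniqueness and $\nm{\boldsymbol w}{H^2}\le C\nm{\boldsymbol F}{H^{-2}}$ from the $H^2$-Korn inequality~\eqref{eq:h2korn} alone; the $\lam$-uniform divergence bound is then recovered a posteriori by testing with the $\boldsymbol v_0\in V$ from Lemma~\ref{lema:divop} satisfying $\divop\boldsymbol v_0=\divop\boldsymbol w$ and $\nm{\na^2\boldsymbol v_0}{L^2}\le C\nm{\na\divop\boldsymbol w}{L^2}$, which gives $\lam\nm{\na\divop\boldsymbol w}{L^2}^2=A(\boldsymbol w,\boldsymbol v_0)-2\mu(\na\boldsymbol\eps(\boldsymbol w),\na\boldsymbol\eps(\boldsymbol v_0))\le C\nm{\boldsymbol F}{H^{-2}}\nm{\na\divop\boldsymbol w}{L^2}$. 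You instead introduce $p=\lam\divop\boldsymbol w$ and run the penalty saddle-point machinery of~\cite[Theorem 2]{Braess:1996}, exactly as the paper does for the lower-order problem in~\eqref{eq:coninfsup}. Both arguments rest on the same two ingredients ($H^2$-Korn and Lemma~\ref{lema:divop}), and yours is sound: the coercivity of $\widetilde a$ on all of $V$ and the inf-sup for $\widetilde b$ in the $\nm{\cdot}{H^2}\times\nm{\cdot}{H^1}$ pairing are verified correctly, and $\widetilde c$ is bounded in the same $H^1$-norm, so the resulting constant is indeed independent of $\lam$. What your route buys is a reusable $\lam$-uniform inf-sup statement for the fourth-order saddle point; what the paper's route buys is economy --- one Lax--Milgram application plus one duality test, with no second invocation of the Brezzi--Braess theorem. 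Your closing worry about curvature corrections in the natural boundary condition is tangential to the estimate: the paper takes the variational problem $A(\boldsymbol w,\boldsymbol v)=(\boldsymbol F,\boldsymbol v)$ as the definition of a solution of~\eqref{eq:auxprob}, so no integration-by-parts bookkeeping on the curved boundary is needed to establish~\eqref{eq:auxpriori}.
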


\begin{proof}
We write~\eqref{eq:auxprob} into a variational problem: Find $\boldsymbol{w}\in V$ such that
\[
A(\boldsymbol{w},\boldsymbol{v})=\int_{\Om}\boldsymbol{F}(x)\boldsymbol{v}(x)\dx\qquad\text{for all\quad}\boldsymbol{v}\in V,
\]
where for any $\boldsymbol{v},\boldsymbol{z}\in V$,  
\[
A(\boldsymbol{v},\boldsymbol{z}){:}=\int_{\Om}\Delta\mc{L}\boldsymbol{v}(x)\boldsymbol{z}(x)\dx=2\mu(\na\boldsymbol{\epsilon}(\boldsymbol{v}),\na\boldsymbol{\epsilon}(\boldsymbol{z}))+\lam(\na\divop\boldsymbol{v},\na\divop\boldsymbol{z}).
\]
For any $\boldsymbol{v}\in V$, using the H$^2$-Korn's inequality~\eqref{eq:h2korn} and the Poincar\'e inequality, we obtain
\[
A(\boldsymbol{v},\boldsymbol{v})\ge 2\mu\nm{\na\boldsymbol{\epsilon}(\boldsymbol{v})}{L^2}^2\ge\dfrac\mu2\nm{\na^2\boldsymbol{v}}{L^2}^2\ge C\nm{\boldsymbol{v}}{H^2}^2.
\]
The existence and uniqueness of $\boldsymbol{w}\in V$ follows from the Lax-Milgram theorem. Moreover, 
\begin{equation}\label{eq:auxpriori1}
\nm{\boldsymbol{w}}{H^2}\le C\nm{\boldsymbol{F}}{H^{-2}}.
\end{equation}
Noting that $\divop\boldsymbol{w}\in P$, using Lemma~\ref{lema:divop}, we obtain that, there exists $\boldsymbol{v}_0\in V$ such that 
\(
\divop\boldsymbol{v}_0=\divop\boldsymbol{w},
\)
and 
\(
\nm{\na^2 \boldsymbol{v}_0}{L^2}\le C\nm{\na\divop\boldsymbol{w}}{L^2}.
\)
Using~\eqref{eq:auxpriori1} and the above estimate, 
\begin{align*}
\lam\nm{\na\divop\boldsymbol{w}}{L^2}^2&=\lam(\na\divop\boldsymbol{w},\na\divop\boldsymbol{v}_0)=A(\boldsymbol{w},\boldsymbol{v}_0)-2\mu(\na\boldsymbol{\epsilon}(\boldsymbol{w}),\na\eps(\boldsymbol{v}_0))\\
&=\int_{\Om}\boldsymbol{F}(x)\boldsymbol{v}_0(x)\dx-2\mu(\na\boldsymbol{\epsilon}(\boldsymbol{w}),\na\eps(\boldsymbol{v}_0))\\
&\le\nm{\boldsymbol{F}}{H^{-2}}\nm{\boldsymbol{v}_0}{H^2}+2\mu\nm{\na^2\boldsymbol{w}}{L^2}\nm{\na^2\boldsymbol{v}_0}{L^2}\\
&\le C\nm{\boldsymbol{F}}{H^{-2}}\nm{\na\divop\boldsymbol{w}}{L^2}.
\end{align*}
This implies
\[
\lam\nm{\na\divop\boldsymbol{w}}{L^2}\le C\nm{\boldsymbol{F}}{H^{-2}}.
\]
Combining the above estimate and~\eqref{eq:auxpriori1} with the Poincar\'e inequality, we obtain~\eqref{eq:auxpriori}.
\end{proof}

 To prove a higher regularity estimate on $\boldsymbol{w}$, we follow~\cite{Vog:1983} by introducing auxiliary variables and transforming~\eqref{eq:auxprob} into an elliptic system. Let $\boldsymbol{z}=\mu\boldsymbol{w}$ and $q=(\lam+\mu)\divop\boldsymbol{w}$, we reshape~\eqref{eq:auxprob} into the following system: Find $\boldsymbol{z}\in V$ and $q\in P$ such that
\begin{equation}\label{eq:auxprob2}
\left\{\begin{aligned}
\Delta^2\boldsymbol{z}+\na\Delta q&=\boldsymbol{F}\qquad&&\text{in\quad}\Om,\\
\divop\boldsymbol{z}&=G\qquad&&\text{in\quad}\Om,\\
\boldsymbol{z}&=\boldsymbol{0}\qquad&&\text{on\quad}\pa\Om,\\
\partial_{\boldsymbol{n}}\Lr{2\boldsymbol{\epsilon}(\boldsymbol{z})+\dfrac{\lam}{\lam+\mu}q1_{2\x 2}}\boldsymbol{n}&=0\qquad&&\text{on\quad}\pa\Om,
\end{aligned}\right.
\end{equation}
in the sense of distribution, where $1_{2\x 2}$ is the $2$ by $2$ identity matrix, and $G=\mu\divop\boldsymbol{w}$.

Following we need to check that~\eqref{eq:auxprob2} is elliptic in the sense of ADN.
\begin{lemma}\label{lema:reg}
Assuming that $\Omega$ is smooth enough, $\lambda>\mu$ and $k$ is a nonnegative integer. If $\boldsymbol{z}\in [H^4(\Om)]^2$ and $q\in H^3(\Om)$ is the solution of~\eqref{eq:auxprob2} for $\boldsymbol{F}\in[H^k(\Om)]^2$ and $G\in H^{k+3}(\Om)$, then there exists $C$ independent of $\lam$ but depending on $\Om$ and $k$ such that
\begin{equation}\label{eq:reg1}
\nm{\boldsymbol{z}}{H^{k+4}}+\nm{q}{H^{k+3}}\le C\Lr{\nm{\boldsymbol{F}}{H^k}+\nm{G}{H^{k+3}}+\nm{\boldsymbol{z}}{L^2}+\nm{q}{L^2}}.
\end{equation}
\end{lemma}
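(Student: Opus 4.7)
The plan is to apply the classical Agmon--Douglis--Nirenberg (ADN) regularity theory for elliptic systems with complementing boundary conditions, as referenced in the statement. This decomposes into three tasks: verifying ellipticity in the ADN sense, verifying the complementing (Shapiro--Lopatinskii) condition for the boundary operators, and tracking the dependence of the resulting a priori constant on $\lambda$.

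For ellipticity, I assign the ADN weights $t_1=t_2=2,\ t_3=1$ to the unknowns $(z_1,z_2,q)$ and $s_1=s_2=2,\ s_3=-1$ to the three equations of~\eqref{eq:auxprob2}, so that each $(i,j)$ entry of the principal symbol $L_0(\xi)$ is homogeneous of degree $s_i+t_j$. A direct cofactor expansion yields $\det L_0(\xi)=-|\xi|^8$, nonzero for $\xi\neq 0$; the system is properly elliptic of total order $8$ and therefore requires $4$ boundary conditions, matching the Dirichlet pair $\boldsymbol{z}=0$ together with the two scalar traction-derivative conditions extracted from $\partial_{\boldsymbol{n}}(2\boldsymbol{\epsilon}(\boldsymbol{z})+\frac{\lambda}{\lambda+\mu}q\,1_{2\times 2})\boldsymbol{n}=0$.

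For the complementing condition, I fix a boundary point, straighten the boundary to $\{y=0\}$ with outward unit normal $-e_2$, freeze coefficients, and Fourier transform in the tangential variable with dual parameter $\xi$. The characteristic equation $(\tau^2-\xi^2)^4=0$ has $\tau=-|\xi|$ as a root of multiplicity $4$, so the decaying-solution subspace is four-dimensional. I would parameterize it by auxiliary fields $\psi_j:=(D^2-\xi^2)\tilde z_j+(\text{first order in }\tilde q)$ satisfying $(D^2-\xi^2)\psi_j=0$, so that $\psi_j=A_j e^{-|\xi|y}$; using the divergence constraint to eliminate one parameter and inverting $(D^2-\xi^2)$ produces $\tilde z_j$ and $\tilde q$ as explicit degree-$\le 2$ polynomials in $y$ times $e^{-|\xi|y}$ with four free constants. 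Imposing $\tilde z_1(0)=\tilde z_2(0)=0$ kills two of them, and the two remaining traction conditions become a $2\times 2$ linear system whose determinant, after routine algebra, equals a positive scalar multiple of $\lambda/(\lambda+\mu)$. Under the hypothesis $\lambda>\mu$ the ratio $\lambda/(\lambda+\mu)\in(1/2,1)$ is bounded below, so the complementing condition holds with a constant uniform in $\lambda$.

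With both ingredients in hand, the ADN theorem~\cite{ADN:1959,Agmon:1964} delivers~\eqref{eq:reg1} with a constant that depends continuously on the symbols of the interior and boundary operators and on the reciprocal of the complementing determinant. Since every $\lambda$-dependent coefficient enters only through $\lambda/(\lambda+\mu)$, which varies in a compact subinterval of $(0,1)$ when $\lambda>\mu$, this constant can be taken independent of $\lambda$. The main obstacle, in my view, is the complementing-condition calculation itself: one must parameterize the decaying ODE solutions carefully and identify a clean factorization of the boundary determinant so that positivity and a uniform lower bound in $\lambda$ are manifest; once this bookkeeping is in place, the invocation of the standard ADN a priori estimate is routine.
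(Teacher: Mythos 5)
Your overall strategy coincides with the paper's: verify ADN ellipticity, the supplementary/root condition, and the complementing condition, then invoke the a priori estimate and argue that all $\lambda$-dependence enters through $\lambda/(\lambda+\mu)\in(1/2,1)$. Your weight assignment $s=(2,2,-1)$, $t=(2,2,1)$ is the paper's $s=(0,0,-3)$, $t=(4,4,3)$ shifted by $2$ (you should renormalize so that $\max_i s_i=0$ before quoting the ADN estimate, but this is cosmetic), and $\det L_0=-|\boldsymbol{\Xi}|^8$ is correct.

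The genuine gap is that the complementing condition --- which is the entire mathematical content of this lemma --- is asserted rather than verified. You state that "after routine algebra" the reduced $2\times 2$ boundary determinant "equals a positive scalar multiple of $\lambda/(\lambda+\mu)$", but this is precisely the computation the lemma exists to perform; in the paper it occupies several pages (seven scalar conditions $g_j^{(m)}(i|\boldsymbol{\Xi}|)=0$ on four constants, of which four are independent, with $\det A=48|\boldsymbol{\Xi}|^4\tfrac{\lambda}{\lambda+\mu}(n_1-in_2)$, a nonzero complex number rather than a positive real one). Your guessed details (decaying solutions of the form "degree $\le 2$ polynomials times $e^{-|\xi|y}$", a clean elimination via the divergence constraint) are unverified and at least partially doubtful for a characteristic root of multiplicity four. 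A second, related weakness: you reduce to the fixed normal $-e_2$ by "straightening the boundary", whereas the paper explicitly warns (footnote to the symbol matrix $B$) that one cannot simply substitute a special normal into the matrices $L$ and $B$ as written, because the matrix representation is not invariant under that substitution; if you wish to work with a fixed normal you must first justify the reduction by exhibiting the covariance of the full boundary-value problem under simultaneous rotation of coordinates and of the vector unknown $\boldsymbol{z}$, which your proposal does not do. Until the boundary determinant is actually computed (for a general normal, or for a special one after a justified reduction), the lemma is not proved.
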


\begin{proof}
For any $\boldsymbol{Q}\in\Om$ and $\boldsymbol{\Xi}=(\xi_1,\xi_2)\in\R^2$, the symbol matrix associated with the system~\eqref{eq:auxprob2} is
$L(\boldsymbol{Q},\boldsymbol{\Xi})=L(\boldsymbol{\Xi})$ with
\[
L(\boldsymbol{\Xi})=\begin{pmatrix}
\abs{\boldsymbol{\Xi}}^4&0&\xi_1\abs{\boldsymbol{\Xi}}^2\\
0&\abs{\boldsymbol{\Xi}}^4&\xi_2\abs{\boldsymbol{\Xi}}^2\\
\xi_1&\xi_2&0
\end{pmatrix}.
\]
We may assign the integers $s_1=s_2=0,s_3=-3$ and $t_1=t_2=4$ and $t_3=3$ such that
\(
\text{deg}L_{ij}(\boldsymbol{\Xi})=s_i+t_j.
\) The principal part of $L$ coincides with $L$ and \(
\abs{\det L}=\abs{\boldsymbol{\Xi}}^8.
\)
This ensures the uniform ellipticity of the system~\cite{Agmon:1964} with the ellipticity $1$.

Next we check the {\em supplementary condition} because we deal with a plane problem. Let $\boldsymbol{\Xi}$ and $\boldsymbol{\Xi}'$ be two nonzero vectors in $\R^2$, the system
\begin{equation}\label{eq:root}
\abs{\det L(\boldsymbol{\Xi}+\tau\boldsymbol{\Xi}')}=0
\end{equation}
has the roots
\[
\tau_0^{\pm}(\boldsymbol{\Xi},\boldsymbol{\Xi}')=\dfrac{-\boldsymbol{\Xi}\cdot\boldsymbol{\Xi}'\pm i\sqrt{\abs{\boldsymbol{\Xi}}^2\abs{\boldsymbol{\Xi}'}^2-(\boldsymbol{\Xi}\cdot\boldsymbol{\Xi}')^2}}{\abs{\boldsymbol{\Xi}'}^2},
\]
both roots have multiplicity $4$. If $\boldsymbol{\Xi}$ and $\boldsymbol{\Xi}'$ are linearly independent, then $\abs{\boldsymbol{\Xi}}\abs{\boldsymbol{\Xi}'}-\abs{\boldsymbol{\Xi}\cdot\boldsymbol{\Xi}'}>0$, and we
conclude that~\eqref{eq:root} has exactly 4 roots with positive imaginary part. This verifies the supplementary condition of~\cite[p. 39]{Agmon:1964}.

It remains to check that the boundary are {\em complementing}. The symbol matrix to any $\boldsymbol{Q}\in\pa\Om$ for the boundary of~\eqref{eq:auxprob2} is
\[
B(\boldsymbol{Q},\boldsymbol{\Xi})=B(\boldsymbol{\Xi})=\begin{pmatrix}
1 & 0 & 0\\
0 & 1 & 0\\
(n_1\xi_1+n\cdot\boldsymbol{\Xi})n\cdot\boldsymbol{\Xi}&n_2\xi_1n\cdot\boldsymbol{\Xi}&\dfrac{\lam}{\lam+\mu}n_1n\cdot\boldsymbol{\Xi}\\
n_1\xi_2 n\cdot\boldsymbol{\Xi}&(n_2\xi_2+n\cdot\boldsymbol{\Xi})n\cdot\boldsymbol{\Xi}&
\dfrac{\lam}{\lam+\mu}n_2 n\cdot\boldsymbol{\Xi}
\end{pmatrix},
\] 
where $\boldsymbol{n}=(n_1,n_2)$ is the unit outward normal depending on $\boldsymbol{Q}$\footnote{We cannot simply take a special outward normal for verifying the complementing boundary because~\eqref{eq:auxprob2} is not rotation invariant.}.

Choosing $r_1=r_2=-4$ and $r_3=r_4=-2$, we obtain
\[
\text{deg}B_{ij}(\boldsymbol{Q},\boldsymbol{\Xi})=r_i+t_j.
\]
It turns out that the principal part of $B$ coincides with $B$. 

The adjoint matrix of $L$ reads as
\[
\adj(\boldsymbol{\Xi})=\begin{pmatrix}
  -\xi_2^2\abs{\boldsymbol{\Xi}}^2 & \xi_1\xi_2\abs{\boldsymbol{\Xi}}^2 & -\xi_1\abs{\boldsymbol{\Xi}}^6\\
  \xi_1\xi_2\abs{\boldsymbol{\Xi}}^2 & -\xi_1^2\abs{\boldsymbol{\Xi}}^2 & -\xi_2\abs{\boldsymbol{\Xi}}^6\\
  -\xi_1\abs{\boldsymbol{\Xi}}^4 & -\xi_2\abs{\boldsymbol{\Xi}}^4 & \abs{\boldsymbol{\Xi}}^8
\end{pmatrix}.
\]

To check problem~\eqref{eq:auxprob2} satisfies the {\em complementing boundary}, we need to verify that the rows of the matrix $B(\boldsymbol{\Xi}+\tau\boldsymbol{n})\adj(\boldsymbol{\Xi}+\tau\boldsymbol{n})$ are linearly independent modulo $M^+(\boldsymbol{\Xi},\tau)$. Hence, we need to show that
\[
\sum_{i=1}^4C_iB_{ik}(\boldsymbol{\Xi}+\tau\boldsymbol{n})\adj_{kj}(\boldsymbol{\Xi}+\tau\boldsymbol{n})\equiv 0\qquad\mod M^+(\boldsymbol{\Xi},\tau)
\]
only if the constants $\{C_i\}_{i=1}^4$ are all $0$.

The entries of  $B(\boldsymbol{\Xi}+\tau\boldsymbol{n})\adj(\boldsymbol{\Xi}+\tau\boldsymbol{n})$ reads as
\[
  (B\adj)_{11}=-\abs{\boldsymbol{\Xi}}^2(\tau^2+\abs{\boldsymbol{\Xi}}^2),\quad (B\adj)_{12}=\tau\abs{\boldsymbol{\Xi}}(\tau^2+\abs{\boldsymbol{\Xi}}^2),\quad (B\adj)_{13}=-\tau(\tau^2+\abs{\boldsymbol{\Xi}}^2)^3,
\]
and
\[
  (B\adj)_{21}=\tau\abs{\boldsymbol{\Xi}}(\tau^2+\abs{\boldsymbol{\Xi}}^2),\quad (B\adj)_{22}=-\tau^2(\tau^2+\abs{\boldsymbol{\Xi}}^2),\quad (B\adj)_{23}=-\abs{\boldsymbol{\Xi}}(\tau^2+\abs{\boldsymbol{\Xi}}^2)^3,
\]
and
\begin{align*}
  (B\adj)_{31}&=-\Lr{2\abs{\boldsymbol{\Xi}}^2+\dfrac\lambda{\lambda+\mu}(\tau^2+\abs{\boldsymbol{\Xi}}^2)}\tau^2(\tau^2+\abs{\boldsymbol{\Xi}}^2),\\
 (B\adj)_{32}&=\Lr{2\tau^2-\dfrac\lambda{\lambda+\mu}(\tau^2+\abs{\boldsymbol{\Xi}}^2)}\tau\abs{\boldsymbol{\Xi}}(\tau^2+\abs{\boldsymbol{\Xi}}^2),\\
 (B\adj)_{33}&=-\Lr{2\tau^2-\dfrac\lambda{\lambda+\mu}(\tau^2+\abs{\boldsymbol{\Xi}}^2)}\tau(\tau^2+\abs{\boldsymbol{\Xi}}^2)^3,
\end{align*}
and
\begin{align*}
  (B\adj)_{41}&=\tau\abs{\boldsymbol{\Xi}}(\tau^2-\abs{\boldsymbol{\Xi}}^2)(\tau^2+\abs{\boldsymbol{\Xi}}^2),\quad(B\adj)_{42}=-\tau^2(\tau^2-\abs{\boldsymbol{\Xi}}^2)(\tau^2+\abs{\boldsymbol{\Xi}}^2),\\
(B\adj)_{43}&=-2\tau^2\abs{\boldsymbol{\Xi}}(\tau^2+\abs{\boldsymbol{\Xi}}^2)^3.
\end{align*}

We assume $\boldsymbol{n}=(n_1, n_2)$ and $\boldsymbol{\Xi}=\abs{\boldsymbol{\Xi}}(-n_2, n_1)$ with $\abs{\boldsymbol{\Xi}}\neq 0$. Therefore, $\boldsymbol{\Xi}+\tau\boldsymbol{n}=(\tau n_1-\abs{\boldsymbol{\Xi}} n_2,\tau n_2+\abs{\boldsymbol{\Xi}}n_1)$ and the equation $\det L(\boldsymbol{\Xi}+\tau\boldsymbol{n})=0$ has four roots with positive imaginary part, all of them equal to $i\abs{\boldsymbol{\Xi}}$, hence
\[
M^+(\boldsymbol{\Xi},\tau)=(\tau-i\abs{\boldsymbol{\Xi}})^4.
\]

The above conditions may be reshaped into: For $j=1,2,3$,
\begin{equation}\label{eq:complement}
f_j(\tau)\equiv 0\qquad\mod M^+(\boldsymbol{\Xi},\tau),
\end{equation}
where $f_j(\tau)=(\tau^2+\abs{\boldsymbol{\Xi}}^2)g_j(\tau)$ for $j=1,2$ and $f_3(\tau)=(\tau^2+\abs{\boldsymbol{\Xi}}^2)^3g_3(\tau)$ with
\begin{align*}
  g_1(\tau){:}&=-C_1(\tau n_2+\abs{\boldsymbol{\Xi}}n_1)^2+C_2(\tau n_1-\abs{\boldsymbol{\Xi}}n_2)(\tau n_2+\abs{\boldsymbol{\Xi}}n_1)\\
  &-C_3\tau\biggl(\abs{\boldsymbol{\Xi}}(\tau n_1-\abs{\boldsymbol{\Xi}}n_2)(\tau n_2+\abs{\boldsymbol{\Xi}}n_1)+\tau(\tau n_2+\abs{\boldsymbol{\Xi}}n_1)^2\\
  &\phantom{-C_3\tau\biggl(}\quad+\dfrac\lambda{\lambda+\mu}(\tau^2+\abs{\boldsymbol{\Xi}}^2)(\tau n_1-\abs{\boldsymbol{\Xi}} n_2) n_1\biggr)\\
  &+C_4\tau\biggl(\tau(\tau n_1-\abs{\boldsymbol{\Xi}}n_2)(\tau n_2+\abs{\boldsymbol{\Xi}}n_1)-\abs{\boldsymbol{\Xi}}(\tau n_2+\abs{\boldsymbol{\Xi}}n_1)^2\\
  &\phantom{-C_3\tau\biggl(}\quad-\dfrac\lambda{\lambda+\mu}(\tau^2+\abs{\boldsymbol{\Xi}}^2)(\tau n_1-\abs{\boldsymbol{\Xi}} n_2) n_2\biggr),
\end{align*}
and
\begin{align*}
  g_2(\tau){:}&=C_1(\tau n_1-\abs{\boldsymbol{\Xi}}n_2)(\tau n_2+\abs{\boldsymbol{\Xi}}n_1)-C_2(\tau n_1-\abs{\boldsymbol{\Xi}} n_2)^2\\
  &+C_3\tau\biggl(\tau(\tau n_1-\abs{\boldsymbol{\Xi}}n_2)(\tau n_2+\abs{\boldsymbol{\Xi}}n_1)+\abs{\boldsymbol{\Xi}}(\tau n_1-\abs{\boldsymbol{\Xi}} n_2)^2\\
  &\phantom{+C_3\tau\biggl(}\quad-\dfrac\lambda{\lambda+\mu}(\tau^2+\abs{\boldsymbol{\Xi}}^2)(\tau n_2+\abs{\boldsymbol{\Xi}}n_1) n_1\biggr)\\
  &+C_4\tau\biggl(\abs{\boldsymbol{\Xi}}(\tau n_1-\abs{\boldsymbol{\Xi}}n_2)(\tau n_2+\abs{\boldsymbol{\Xi}}n_1)-\tau(\tau n_1-\abs{\boldsymbol{\Xi}} n_2)^2\\
  &\phantom{+C_4\tau\biggl(}\quad-\dfrac\lambda{\lambda+\mu}(\tau^2+\abs{\boldsymbol{\Xi}}^2)(\tau n_2+\abs{\boldsymbol{\Xi}}n_1) n_2\biggr),
\end{align*}
and
\begin{align*}
  g_3(\tau){:}&=-C_1(\tau n_1-\abs{\boldsymbol{\Xi}} n_2)-C_2(\tau n_2+\abs{\boldsymbol{\Xi}}n_1)\\
  &-C_3\tau\Lr{2\tau(\tau n_1-\abs{\boldsymbol{\Xi}} n_2)-\dfrac\lambda{\lambda+\mu}(\tau^2+\abs{\boldsymbol{\Xi}}^2) n_1}\\
  &-C_4\tau\Lr{2\tau(\tau n_2+\abs{\boldsymbol{\Xi}}n_1)-\dfrac\lambda{\lambda+\mu}(\tau^2+\abs{\boldsymbol{\Xi}}^2) n_2}.
\end{align*}

Observing that $f_1(\tau)$ and $f_2(\tau)$ have only one root $i\abs{\boldsymbol{\Xi}}$, and $f_3(\tau)$ has root $i\abs{\boldsymbol{\Xi}}$ with multiplicity $3$, it follows from~\eqref{eq:complement} that 
\[
g_j(i\abs{\boldsymbol{\Xi}})=0,\quad g_j'(i\abs{\boldsymbol{\Xi}})=0,\quad
g_j^{\prime\prime}(i\abs{\boldsymbol{\Xi}})=0
\]
 for $j=1,2$ and $g_3(i\abs{\boldsymbol{\Xi}})=0$.

It follows from $g_1(i\abs{\boldsymbol{\Xi}})=g_2(i\abs{\boldsymbol{\Xi}})=g_3(i\abs{\boldsymbol{\Xi}})=0$ that
\begin{equation}\label{eq:rela0}
  C_1-i C_2-2\abs{\boldsymbol{\Xi}}^2C_3+2i\abs{\boldsymbol{\Xi}}^2C_4=0.
\end{equation}

Using $g_1^{\prime}(i\abs{\boldsymbol{\Xi}})=0$, we obtain
\begin{equation}\label{eq:rela1}\begin{aligned}
  2n_2C_1&-(n_1+i n_2)C_2-2\abs{\boldsymbol{\Xi}}^2\Lr{3n_2-\dfrac{\lambda+2\mu}{\lambda+\mu}i n_1}C_3\\
  &+2\abs{\boldsymbol{\Xi}}^2\Lr{2 n_1+\dfrac{2\lambda+3\mu}{\lambda+\mu}i n_2}C_4=0.
\end{aligned}\end{equation}

Using $g_2^\prime(i\abs{\boldsymbol{\Xi}})=0$, we obtain
\begin{equation}\label{eq:rela2}\begin{aligned}
  (n_1+i n_2)C_1&-2i n_1C_2-2\abs{\boldsymbol{\Xi}}^2\Lr{\dfrac{2\lambda+3\mu}{\lambda+\mu}n_1+2i n_2}C_3\\
  &-2\abs{\boldsymbol{\Xi}}^2\Lr{\dfrac{\lambda+2\mu}{\lambda+\mu}n_2-3i n_1}C_4=0.
\end{aligned}\end{equation}
Note the equation~\eqref{eq:rela2} is not independent of~\eqref{eq:rela0} and~\eqref{eq:rela1}.

Using $g_1^{\prime\prime}(i\abs{\boldsymbol{\Xi}})=0$, we obtain
\begin{equation}\label{eq:rela3}\begin{aligned}
  n_2^2C_1&-n_1n_2C_2-\abs{\boldsymbol{\Xi}}^2\Lr{\dfrac{3\lambda-2\mu}{\lambda+\mu}n_1^2+7n_2^2-\dfrac{6\lambda+9\mu}{\lambda+\mu}i n_1n_2}C_3\\
  &+\abs{\boldsymbol{\Xi}}^2\Lr{\dfrac{4\lambda+9\mu}{\lambda+\mu}n_1n_2-3i n_1^2+\dfrac{3\lambda+6\mu}{\lambda+\mu}i n_2^2}C_4=0.
\end{aligned}\end{equation}

Using $g_2^{\prime\prime}(i\abs{\boldsymbol{\Xi}})=0$, we obtain
\begin{equation}\label{eq:rela4}\begin{aligned}
  n_1n_2C_1&-n_1^2C_2-\abs{\boldsymbol{\Xi}}^2\Lr{\dfrac{4\lambda+9\mu}{\lambda+\mu}n_1n_2-\dfrac{3\lambda+6\mu}{\lambda+\mu}i n_1^2+3i n_2^2}C_3\\
  &+\abs{\boldsymbol{\Xi}}^2\Lr{7n_1^2+\dfrac{3\lambda-2\mu}{\lambda+\mu}n_2^2+\dfrac{6\lambda+9\mu}{\lambda+\mu}i n_1n_2}C_4=0.
\end{aligned}\end{equation}

We may write all the above equations~\eqref{eq:rela0},~\eqref{eq:rela1},~\eqref{eq:rela3},~\eqref{eq:rela4} into a compact form $A\boldsymbol{c}=0$ with $\boldsymbol{c}=(C_1,C_2,C_3,C_4)^T$. A direct calculation gives
\[
\det A=48\abs{\boldsymbol{\Xi}}^4\dfrac\lambda{\lambda+\mu}(n_1-i n_2)\neq 0.
\]

We conclude that
\[
C_1=C_2=C_3=C_4=0.
\]
Therefore, the boundary for the system~\eqref{eq:auxprob2} are {\em complementing}.

Finally, the estimate~\eqref{eq:reg1} follows from~\cite[Theorem 10.5]{Agmon:1964} and~\cite[Appendix D]{BochevGunzburg:2009}. The constant $C$ is independent of $\lam$ because the ellipticity constant, the roots of $L(\boldsymbol{\Xi}+\tau\boldsymbol{n})$, and the bound for the coefficients of operators $L$ and $B$ are independent of $\lam$.
\end{proof}

\begin{lemma}\label{lema:reg-aux}
Assume $\partial\Omega\in C^4$, let $\boldsymbol{w}$ be the solution of~\eqref{eq:auxprob}, then
\begin{equation}\label{eq:asspt}
\nm{\boldsymbol{w}}{H^3}+\lambda\nm{\divop\boldsymbol{w}}{H^2}\le C\nm{\boldsymbol{F}}{H^{-1}}.
\end{equation}
\end{lemma}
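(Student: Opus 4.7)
The plan is to bootstrap the $H^2$-estimate~\eqref{eq:auxpriori} by one additional order of regularity through the ADN machinery established in Lemma~\ref{lema:reg}. Concretely, I would apply the a priori estimate for the reformulated system~\eqref{eq:auxprob2} at the index $k=-1$, yielding (once established)
\[
\nm{\boldsymbol{z}}{H^3}+\nm{q}{H^2}\le C\Lr{\nm{\boldsymbol{F}}{H^{-1}}+\nm{G}{H^2}+\nm{\boldsymbol{z}}{L^2}+\nm{q}{L^2}},
\]
with $\boldsymbol{z}=\mu\boldsymbol{w}$, $q=(\lam+\mu)\divop\boldsymbol{w}$, and $G=\mu\divop\boldsymbol{w}$. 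Since $\pa\Om\in C^4$, this $k=-1$ version lies within reach of ADN theory and can be obtained by a duality argument testing against smooth solutions of the formally adjoint problem; uniform ellipticity, the supplementary condition, and the complementing boundary conditions verified in Lemma~\ref{lema:reg} are all independent of $k$ and carry over to the negative-index case.

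Next, I would exploit the structural identity $G=\frac{\mu}{\lam+\mu}q$ to write $\nm{G}{H^2}\le\frac{\mu}{\lam+\mu}\nm{q}{H^2}$ and absorb this contribution into the left-hand side. Under $\lam>\mu$ inherited from Lemma~\ref{lema:reg}, the prefactor $\mu/(\lam+\mu)<1/2$; for large $\lam$ the absorption is clean, while for moderate $\lam$ the estimate reduces to standard $H^4$ elliptic regularity for $\Delta\mc{L}$ acting on $V$, giving a $\lam$-dependent but bounded constant. The resulting intermediate inequality is
\[
\nm{\boldsymbol{z}}{H^3}+\nm{q}{H^2}\le C\Lr{\nm{\boldsymbol{F}}{H^{-1}}+\nm{\boldsymbol{z}}{L^2}+\nm{q}{L^2}}
\]
with $C$ independent of $\lam$.

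To dispatch the lower-order terms, I would invoke~\eqref{eq:auxpriori} combined with the trivial embedding $\nm{\boldsymbol{F}}{H^{-2}}\le C\nm{\boldsymbol{F}}{H^{-1}}$, obtaining
\[
\nm{\boldsymbol{z}}{L^2}+\nm{q}{L^2}\le C\mu\nm{\boldsymbol{w}}{L^2}+C(\lam+\mu)\nm{\divop\boldsymbol{w}}{L^2}\le C\nm{\boldsymbol{F}}{H^{-1}},
\]
with $C$ independent of $\lam$ under $\lam>\mu$. Translating $\nm{\boldsymbol{z}}{H^3}+\nm{q}{H^2}$ back into norms of $\boldsymbol{w}$ and $\divop\boldsymbol{w}$ then yields the target inequality $\nm{\boldsymbol{w}}{H^3}+\lam\nm{\divop\boldsymbol{w}}{H^2}\le C\nm{\boldsymbol{F}}{H^{-1}}$.

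The main obstacle I anticipate is the rigorous derivation of the $k=-1$ ADN estimate together with the $\lam$-uniform absorption of $\nm{G}{H^2}$. The negative-index regularity demands care in defining the weak formulation when $\boldsymbol{F}\in H^{-1}$ only, and the absorption step must exploit the algebraic coupling $G=\mu\divop\boldsymbol{w}$ rather than treating $G$ as independent external data, so that the effective coefficient in front of $\nm{q}{H^2}$ remains controlled uniformly in $\lam$ throughout the nearly incompressible regime $\lam\to\infty$.
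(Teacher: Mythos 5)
Your overall architecture (an ADN-type a priori bound for the reformulated system~\eqref{eq:auxprob2}, absorption of the $\nm{G}{H^2}$ term via $G=\tfrac{\mu}{\lam+\mu}q$ for $\lam$ large, control of the lower-order terms by~\eqref{eq:auxpriori}) matches the paper's, but the step you yourself flag as the main obstacle --- the $k=-1$ version of~\eqref{eq:reg1} --- is a genuine gap, not a technicality. Lemma~\ref{lema:reg} is stated for \emph{nonnegative} integers $k$ and rests on~\cite[Theorem 10.5]{Agmon:1964}, which does not cover negative indices. Obtaining a transposed, negative-order estimate for this mixed-order system (orders $t_1=t_2=4$, $t_3=3$, boundary operators of orders $0$ and $2$, plus the divergence constraint) by duality would require identifying the formal adjoint boundary value problem, verifying that it is again uniformly elliptic with complementing boundary conditions, and tracking the $\lam$-dependence through that entire argument; none of this is carried out, and it is not a routine extension of what Lemma~\ref{lema:reg} provides.

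The paper sidesteps the issue. It first invokes standard elliptic regularity to know qualitatively that $\boldsymbol{w}\in[H^4(\Om)]^2$ when $\boldsymbol{F}\in[L^2(\Om)]^2$, then applies~\eqref{eq:reg1} at $k=0$ (where the cited theorem does apply) and performs essentially the same absorption steps you describe to reach
\[
\nm{\boldsymbol{w}}{H^4}+\lam\nm{\divop\boldsymbol{w}}{H^3}\le C\nm{\boldsymbol{F}}{L^2}
\]
for $\lam$ large, and finally \emph{interpolates} this bound against the energy estimate $\nm{\boldsymbol{w}}{H^2}+\lam\nm{\divop\boldsymbol{w}}{H^1}\le C\nm{\boldsymbol{F}}{H^{-2}}$ of~\eqref{eq:auxpriori} to land exactly at the intermediate level $H^{-1}\to H^3$ claimed in~\eqref{eq:asspt}. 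Interpolation between the two ends of the Sobolev scale is standard and keeps all constants $\lam$-uniform, so it delivers precisely what your $k=-1$ step was meant to provide; if you replace that step by this interpolation, the remainder of your argument goes through.
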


\begin{proof}
If we restrict $\lam$ to a bounded set, then~\eqref{eq:asspt} follows from the standard 
elliptic regularity estimate for Problem~\eqref{eq:auxprob}.

It suffices to consider Problem~\eqref{eq:auxprob} with large $\lam$. Without loss of generality, we assume that $\lam>\mu$. Using the standard elliptic regularity estimate, there exists a unique solution $\boldsymbol{w}\in [H^4(\Om)]^2$ when $\boldsymbol{F}\in[L^2(\Om)]^2$. Hence $\boldsymbol{z}=\mu \boldsymbol{w}\in [H^4(\Om)]^2$ and $q=(\lam+\mu)\divop\boldsymbol{w}\in H^3(\Om)$ are the solution of~\eqref{eq:auxprob2} with $G=\mu\divop\boldsymbol{w}$. Using~\eqref{eq:reg1} in terms of $\boldsymbol{z}=\mu \boldsymbol{w}$ and $q=(\lambda+\mu)\divop\boldsymbol{w}$ with $k=0$, we obtain
\[
\nm{\boldsymbol{w}}{H^4}+(\lam+\mu)\nm{\divop\boldsymbol{w}}{H^3}\le C\Lr{\nm{\boldsymbol{F}}{L^2}+\nm{\divop\boldsymbol{w}}{H^3}+\nm{\boldsymbol{w}}{H^1}+\lam\nm{\divop\boldsymbol{w}}{L^2}}.
\]
Using the a-priori estimate~\eqref{eq:auxpriori}, we rewrite the above inequality as
\[
\nm{\boldsymbol{w}}{H^4}+(\lam+\mu)\nm{\divop\boldsymbol{w}}{H^3}\le C_1\Lr{\nm{\boldsymbol{F}}{L^2}+\nm{\divop\boldsymbol{w}}{H^3}}.
\]
By
\[
\nm{\boldsymbol{w}}{H^4}+\lam\nm{\divop\boldsymbol{w}}{H^3}\le C_1\nm{\boldsymbol{F}}{L^2}+\dfrac{\lambda}{2}\nm{\divop\boldsymbol{w}}{H^3}
\]
when $\lam\ge 2C_1$. This gives
\[
\nm{\boldsymbol{w}}{H^4}+\lam\nm{\divop\boldsymbol{w}}{H^3}\le 2C_1\nm{\boldsymbol{F}}{L^2}.
\]

Interpolating between~\eqref{eq:auxpriori} and the above inequality, we obtain~\eqref{eq:asspt}. 
\end{proof}
We are ready to prove the higher regularity of the problem~\eqref{eq:mix} when $\Omega$ is smooth. This is the main result of this part.
\begin{theorem}\label{thm:reg2}
Let $(\boldsymbol{u},p)$ be the solution of~\eqref{eq:mix}, $\boldsymbol{u}_0$ be the solution of~\eqref{eq:elas}, and $p_0=\lam\divop\boldsymbol{u}_0$. There holds
\begin{equation}\label{eq:h3est}
\nm{\boldsymbol{u}}{H^3}+\nm{p}{H^2}\le C\iota^{-1}\nm{\boldsymbol{f}}{L^2}.
\end{equation}
Moreover if $\boldsymbol{u}_0\in[H^3(\Omega)]^2$ and $p_0\in H^2(\Omega)$, then
\begin{equation}\label{eq:h3esthigh}
\nm{\boldsymbol{u}}{H^3}+\nm{p}{H^2}\le C\iota^{-1/2}(\nm{\boldsymbol{u}_0}{H^3}+\nm{p_0}{H^2}).
\end{equation}
\end{theorem}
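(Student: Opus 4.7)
The plan rests on the algebraic identity $\mc{L}\boldsymbol{u} + \boldsymbol{f} = \mc{L}(\boldsymbol{u} - \boldsymbol{u}_0)$, which follows from $\boldsymbol{u}_0$ solving the limit problem $-\mc{L}\boldsymbol{u}_0 = \boldsymbol{f}$ in $\Omega$. Combined with the PDE~\eqref{eq:sgbvp} rewritten as $\iota^2\Delta\mc{L}\boldsymbol{u} = \mc{L}\boldsymbol{u} + \boldsymbol{f}$, it shows that $\boldsymbol{u}$ itself is a solution of the auxiliary problem~\eqref{eq:auxprob} with forcing $\boldsymbol{F} = \iota^{-2}\mc{L}(\boldsymbol{u} - \boldsymbol{u}_0)$, since the boundary conditions $\boldsymbol{u} = \partial_{\boldsymbol{n}}\C\boldsymbol{\epsilon}(\boldsymbol{u})\boldsymbol{n} = 0$ match those of~\eqref{eq:auxprob} exactly.

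First, I would invoke Lemma~\ref{lema:reg-aux}, using $\lambda\nm{\divop\boldsymbol{u}}{H^2} = \nm{p}{H^2}$, to obtain
\[
\nm{\boldsymbol{u}}{H^3} + \nm{p}{H^2} \le C\iota^{-2}\nm{\mc{L}(\boldsymbol{u} - \boldsymbol{u}_0)}{H^{-1}}.
\]
Next I would control the $H^{-1}$ norm by duality: for any $\boldsymbol{v} \in [H^1_0(\Omega)]^2$, integration by parts together with the symmetry of $\C\boldsymbol{\epsilon}(\boldsymbol{u} - \boldsymbol{u}_0)$ yields
\[
\langle\mc{L}(\boldsymbol{u} - \boldsymbol{u}_0), \boldsymbol{v}\rangle = -2\mu(\boldsymbol{\epsilon}(\boldsymbol{u} - \boldsymbol{u}_0), \boldsymbol{\epsilon}(\boldsymbol{v})) - (p - p_0, \divop\boldsymbol{v}),
\]
which by the first Korn inequality and Cauchy--Schwarz delivers
\[
\nm{\mc{L}(\boldsymbol{u} - \boldsymbol{u}_0)}{H^{-1}} \le C\Lr{\nm{\boldsymbol{u} - \boldsymbol{u}_0}{H^1} + \nm{p - p_0}{L^2}}.
\]

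To close the argument I would apply Theorem~\ref{thm:reg1}. The estimate~\eqref{eq:est} provides $\nm{\boldsymbol{u} - \boldsymbol{u}_0}{H^1} + \nm{p - p_0}{L^2} \le C\iota\nm{\boldsymbol{f}}{L^2}$, which absorbs exactly one power of $\iota$ and produces~\eqref{eq:h3est}. Under the higher regularity hypothesis, the sharper estimate~\eqref{eq:esthigh} gives $\le C\iota^{3/2}(\nm{\boldsymbol{u}_0}{H^3} + \nm{p_0}{H^2})$, and combining with the $\iota^{-2}$ prefactor yields~\eqref{eq:h3esthigh}.

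The principal subtlety is spotting the identity $\mc{L}\boldsymbol{u} + \boldsymbol{f} = \mc{L}(\boldsymbol{u} - \boldsymbol{u}_0)$. Without it, applying Lemma~\ref{lema:reg-aux} with the naive bound $\nm{\mc{L}\boldsymbol{u}}{H^{-1}} \le C\nm{\boldsymbol{f}}{L^2}$ extracted from the lower-regularity estimate~\eqref{eq:estsolu} would only produce $\iota^{-2}$, one power too large. Rewriting the forcing through the identity is precisely what lets us reuse the singular-perturbation smallness of $\boldsymbol{u} - \boldsymbol{u}_0$ already recorded in Theorem~\ref{thm:reg1} as an a priori control on $\boldsymbol{F}$, thereby gaining the missing factor of $\iota$ (respectively $\iota^{3/2}$).
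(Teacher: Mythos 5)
Your proposal is correct and follows essentially the same route as the paper: rewrite \eqref{eq:sgbvp} as the auxiliary problem \eqref{eq:auxprob} with forcing $\iota^{-2}\mc{L}(\boldsymbol{u}-\boldsymbol{u}_0)$, apply Lemma~\ref{lema:reg-aux}, bound $\nm{\mc{L}(\boldsymbol{u}-\boldsymbol{u}_0)}{H^{-1}}$ by duality through $\nm{\boldsymbol{u}-\boldsymbol{u}_0}{H^1}+\nm{p-p_0}{L^2}$, and invoke Theorem~\ref{thm:reg1} to recover the powers $\iota^{-1}$ and $\iota^{-1/2}$. Your write-up merely makes explicit the steps the paper leaves terse (the identity $\mc{L}\boldsymbol{u}+\boldsymbol{f}=\mc{L}(\boldsymbol{u}-\boldsymbol{u}_0)$ and the integration-by-parts bound on the $H^{-1}$ norm).
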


\begin{proof}
We rewrite~\eqref{eq:sgbvp} as
\[
\left\{\begin{aligned}
\mu\Delta^2u+(\mu+\lam)\Delta\na\divop\boldsymbol{u}&=\iota^{-2}\mc{L}(\boldsymbol{u}-\boldsymbol{u}_0)\qquad&&\text{in\quad}\Omega,\\
u=\partial_{\boldsymbol{n}}\boldsymbol{\sigma n}&=0\qquad&&\text{on\quad}\pa\Omega.
\end{aligned}\right.
\]
Using~\eqref{eq:asspt} and the error estimate, we obtain
\[
\nm{\boldsymbol{u}}{H^3}+\nm{p}{H^2}\le C\iota^{-2}\nm{\mc{L}(\boldsymbol{u}-\boldsymbol{u}_0)}{H^{-1}}\le C\iota^{-2}\inm{(\boldsymbol{u}-\boldsymbol{u}_0,p-p_0)},
\]
which together with Theorem~\ref{thm:reg1} gives~\eqref{eq:h3est} and~\eqref{eq:h3esthigh}.
\end{proof}
\end{appendix}
\bibliographystyle{amsplain}
\bibliography{sg-soft}
\end{document}